\numberwithin{equation}{section}
\newcommand{\C}{\mathbb{C}}
\newcommand{\R}{\mathbb{R}}
\newcommand{\mm}{{\mbox{\boldmath$m$}}}
\newcommand{\ppi}{{\mbox{\boldmath$\pi$}}}
\newcommand{\restr}[1]{\lower3pt\hbox{$|_{#1}$}}
\newcommand{\Leb}[1]{{\mathscr L}^{#1}}      
\newcommand{\eps}{\varepsilon}  
\newcommand{\nchi}{{\raise.3ex\hbox{$\chi$}}}
\newcommand{\media}{\mkern12mu\hbox{\vrule height4pt           %
          depth-3.2pt                                 
          width5pt}\mkern-16.5mu\int\nolimits}        
\newcommand{\mediatext}[1]{- \hskip-.9em \int_{#1}} 
\newcommand{\Probabilities}[1]{\mathscr P(#1)}          
\newenvironment{proof}{\removelastskip\par\medskip   
\noindent{\em Proof.}
\rm}{\penalty-20\null\hfill$\square$\par\medbreak}
\newtheorem{theorem}{Theorem}[section]
\newtheorem{lemma}[theorem]{Lemma}
\newtheorem{proposition}[theorem]{Proposition}
\newtheorem{definition}[theorem]{Definition}
\newtheorem{remark}[theorem]{Remark}
\newcommand{\fr}{\hfill$\blacksquare$}                      
\renewcommand{\C}{{\rm Ch}}
\renewcommand{\C}{{\sf Ch}}
\renewcommand{\mm}{\mathfrak m}
\title{Tensorization of Cheeger energies, \\the space $H^{1,1}$ and the area formula for graphs}
\begin{document}

\author{Luigi Ambrosio\
   \thanks{Scuola Normale Superiore, Pisa. email: \textsf{luigi.ambrosio@sns.it}}
   \and
   Andrea Pinamonti\
 \thanks{Scuola Normale Superiore, Pisa. email: \textsf{andrea.pinamonti@sns.it}}
 \and
   Gareth Speight\
 \thanks{Scuola Normale Superiore, Pisa. email: \textsf{gareth.speight@sns.it}}
   }

\maketitle
\begin{abstract} First we study in detail the tensorization properties of weak gradients in metric measure spaces $(X,d,\mm)$.
Then, we compare potentially different notions of Sobolev space $H^{1,1}(X,d,\mm)$ and of weak gradient with exponent 1. 
Eventually we apply these results to compare the area functional $\int\sqrt{1+|\nabla f|_w^2}\,d\mm$ with the perimeter of the subgraph
of $f$, in the same spirit as the classical theory. 
\end{abstract}


\section{Introduction}

The aim of this paper is the investigation of some fine analytic questions related to the theory of weak gradients in
metric measure spaces. One of our motivations has been the study of the area functional
$$
{\cal A}(f)=\int_X\sqrt{1+|\nabla f|_w^2}\,d\mm
$$
in metric measure spaces $(X,d,\mm)$, where $|\nabla f|_w$ denotes a suitable notion of weak gradient
induced by the metric measure structure. In the metric measure setting, the functional ${\cal A}$ 
has been an object of investigation in the recent paper \cite{HKL13}, see also \cite{HKLL14} for generalizations. 
In the classical theory, it is well known that ${\cal A}$ corresponds to
the surface area of the subgraph 
$$
E_f:=\bigl\{(x,t)\in X\times\R:\ t<f(x)\bigr\}
$$
of $f$. One of the aims of this paper is to investigate to what extent this correspondence
holds also in metric measure spaces. The question makes sense, because by now there is a well established theory of
$BV$ functions and sets of finite perimeter in metric measure spaces, initiated in \cite{Mi} (see also \cite{AmDi}, \cite{KLS13}
for more recent developments). This theory provides, among other things, a canonical definition of surface area for
sets of finite perimeter, see Section~\ref{ssec4} for details.

Heuristically, the representation of ${\cal A}(f)$ as surface area of a subgraph seems to be closely related to a tensorization
property of the weak gradients in the product metric measure structure, where one of the factors is 
simply the Euclidean real line endowed with the Lebesgue
measure $\Leb{1}$. Specifically, we mean the following: if $(X,d_X,\mm_X)$ and $(Y,d_Y,\mm_Y)$ are the factors, then
for the metric measure structure
$$
\bigl(X\times Y,\sqrt{d_X^2+d_Y^2},\mm_X\times\mm_Y\bigr)
$$
the weak gradient (for the sake of simplicity we ignore in this introduction its potential dependence on the integrability exponent)
should satisfy
\begin{equation}\label{eq:alicaju}
|\nabla f(x,y)|^2_w=|\nabla f(\cdot,y)|_w^2(x)+|\nabla f(x,\cdot)|_w^2(y)\qquad\text{for $\mm_X\times\mm_Y$-a.e. $(x,y)\in X\times Y$.}
\end{equation}
There exist many properties that are easily seen to be stable under tensorization: compactness, properness, completeness, separability, length and geodesic properties,
doubling, Poincar\'e inequalities, etc. As a matter of fact, although there are heuristic arguments suggesting that tensorization should always be
true for the weak gradients (see Proposition~\ref{prop:easy_tensorization}, dealing with Euclidean spaces endowed with general norms), this question
has not been much investigated so far. In \cite{AGS11b} the tensorization property has been proved assuming a curvature lower
bound on the factors $(X,d_X,\mm_X)$, $(Y,d_Y,\mm_Y)$ and the quadratic structure of the Cheeger energy. 
In Section~\ref{sec:tenso} we refine the analysis of \cite{AGS11b} 
and we prove three results, all independent of curvature assumptions (with the first and third also independent of doubling
and Poincar\'e assumptions, the first and the second independent of quadraticity assumptions):

\noindent
(a) The weak gradient $|\nabla f|_w$ in the product structure always coincides with the $L^2(X\times Y,\mm_X\times\mm_Y)$
relaxation of the squared ``cartesian'' slope
$$
|\nabla f|^2_c(x,y):=|\nabla f(\cdot,y)|^2(x)+|\nabla f(x,\cdot)|^2(y),
$$
starting 
from the class of locally Lipschitz functions (here $|\nabla g|$ is the slope, or local Lipschitz constant, defined in 
\eqref{def:slope}), see Theorem~\ref{thm:I=C}.

\noindent
(b) Using (a), we show that if the factors are doubling and satisfy a $(1,2)$-Poincar\'e inequality, then \eqref{eq:alicaju} holds, see Theorem~\ref{thm:PI_tensor}.
Although we follow a different path for the proof, it might be that this result could also be obtained starting from Proposition~\ref{prop:easy_tensorization}
using Cheeger's differentiable structure and a suitable notion of product of Cheeger's charts. We also prove a more refined result, namely if $f\in L^2(X\times Y,\mm_X\times\mm_Y)$
satisfies $f(\cdot,y)\in H^{1,2}(X,d_X,\mm_X)$ for $\mm_Y$-a.e. $y\in Y$, $f(x,\cdot)\in H^{1,2}(Y,d_Y,\mm_Y)$ for $\mm_X$-a.e. $x\in X$
and
$$
\int_X\int_Y |\nabla f(x,\cdot)|_w^2\,d\mm_Y d\mm_Y(x)
+
\int_Y\int_X |\nabla f(\cdot,y)|_w^2\,d\mm_X d\mm_Y(y)<\infty,
$$
then $f\in H^{1,2}(X\times Y,\sqrt{d_X^2+d_Y^2},\mm_X\times\mm_Y)$.

\noindent
(c) If the factors are strongly asymptotically Hilbertian, meaning that $\int_X |\nabla f|^2\,d\mm_X$ and 
$\int_Y|\nabla g|^2\,d\mm_Y$ are quadratic forms on locally Lipschitz functions, then the tensorization property \eqref{eq:alicaju}
holds, see Theorem~\ref{thm:Cquad}. 

Section~\ref{sec:4} is devoted to the analysis of the Sobolev space $H^{1,1}(X,d,\mm)$. We define $H^{1,1}(X,d,\mm)$
in the spirit of the $BV$ theory, considering those functions whose total variation $|Df|$ is absolutely continuous w.r.t.
$\mm$, and which are $H^{1,1}$ along almost every curve (in the sense of \cite{AGS11a}). Then, in Theorem~\ref{thm:equivSob1} we compare
this definition with the other ones already considered in the literature. An interesting fact is that, even though under doubling
and $(1,1)$-Poincar\'e the spaces are the same, the associated notions of weak gradient do not coincide (even though they are
comparable, see \eqref{eq:treponzio3} and \eqref{eq:treponzio4}), see \cite{HKLL14} and Remark~\ref{rem:do_not_coincide}.
More precisely, the gradient $|\nabla f|_{*,1}$ singled out by the $BV$ theory can be approximated
only in the sense of measures by slopes of Lipschitz functions (see \eqref{optimal_sequence_BV}) and, by construction, it gives rise to a functional
$f\mapsto \int_X|\nabla f|_{*,1}\,d\mm$ which is lower semicontinuous on locally Lipschitz functions. On the other hand,
the gradient $|\nabla f|_{w,1}$ singled out by the theory of functions absolutely continuous along ${\rm Mod}_1$-a.e. curve
enjoys a stronger approximation property (see Proposition~\ref{prop:strongapp} for the simple proof) but, in general, it fails to give raise to a lower semicontinuous functional $f\mapsto\int |\nabla f|_{w,1}\,d\mm$.

Finally, in Section~\ref{sec:per} we prove upper and lower bounds for the perimeter of $E_f$, even for $f\in BV(X,d,\mm)$. For
$f\in H^{1,1}(X,d,\mm)$, under the doubling and $(1,1)$-Poincar\'e assumptions we get
$$
\int_B\sqrt{1+|\nabla f|^2_{*,1}}\,d\mm\leq P(E_f,B\times\R)\leq
\int_B\sqrt{1+|\nabla f|^2_{w,1}}\,d\mm
$$
for any Borel set $B\subset X$,
so that equalities hold if and only if the two notions of gradient coincide for the metric measure structure. Besides the case of asymptotically
Hilbertian metric measure spaces
with a curvature lower bound (see \cite{GiHa}, which are the so-called $RCD(K,\infty)$ spaces introduced in \cite{AGS11b}),
we are not aware of conditions ensuring the coincidence $\mm$-a.e. of the
two gradients. 

\smallskip\noindent
{\bf Acknowledgements.} 
We would like to thank N. Shanmugalingam for helpful hints on the theory of weak gradients in the critical case $p=1$.
The authors acknowledge the support of the grant ERC ADG GeMeThNES. The second author has been partially supported by the Gruppo
Nazionale per l'Analisi Matematica, la Probabilit\`a e le loro
Applicazioni (GNAMPA) of the Istituto Nazionale di Alta Matematica
(INdAM).

\section{Notation and preliminary results}\label{sec:2}

We assume that all metric measure spaces $(X,d,\mm)$ are complete and separable, that $\mm$ is a finite Borel measure
and that the support of $\mm$ is the whole space $X$. 

We denote by $\Leb{d}$ the Lebesgue measure in $\R^d$. We say that $f:X\to\R$ is
{\it locally Lipschitz} if all its restrictions to bounded sets are Lipschitz. The {\it slope} (or
local Lipschitz constant) of $f:X\to\R$ is defined by
\begin{equation}\label{def:slope}
|\nabla f|(x)=\limsup_{y\to x}\frac{|f(y)-f(x)|}{d(y,x)}.
\end{equation}

\subsection{Doubling and Poincar\'e inequality}\label{ssec1}

We say that $(X,d,\mm)$ is doubling if there exists a constant $C>0$ satisfying
\begin{equation}\label{eq:doubling}
\mm(B_{2r}(x))\leq C\mm(B_r(x))\qquad\forall x\in X,\,\,r>0.
\end{equation}

Given $p\in [1,\infty)$, we say that the $(1,p)$-Poincar\`e inequality holds for Lipschitz functions if, for all
open balls $B_r(x)\subset X$, one has
\begin{equation}\label{eq:PI}
\media_{B_r(x)}| f-f_{x,r}|\,d\mm\leq c r\biggl(\media_{B_{\lambda r}(x)}|\nabla f|^p\,d\mm\biggr)^{1/p}
\qquad\forall f\in {\rm Lip}_{\rm loc}(X)
\end{equation}
for suitable $c\geq 0$ and $\lambda\geq 1$ independent of $B_r(x)$ (here $\mediatext{A}$ denotes the averaged integral on
a Borel set $A$ and $f_{x,r}=\mediatext{B_r(x)}f$).
In the sequel we say that a constant is structural if it depends only on the doubling constant in \eqref{eq:doubling}
 and the constants $c,\,\lambda$ and the exponent $p$ in \eqref{eq:PI}.

\subsection{Maximal functions and approximate continuity}\label{ssec2}

Given $g\in L^1(X,\mm)$ nonnegative, we define
\begin{equation}\label{def:massimale}
M_r g(x):=\sup\left\{ \media_{B_s(x)}g\,d\mm:\ s\in (0,r)\right\}\qquad x\in X,\,\,r\in (0,\infty].
\end{equation}
More generally, if $\mu$ is a finite Borel measure, we define
\begin{equation}\label{def:massimalemu}
M_r \mu(x):=\sup\left\{\frac{\mu(B_s(x))}{\mm(B_s(x))}:\ s\in (0,r)\right\}\qquad x\in X,\,\,r\in (0,\infty],
\end{equation}
so that $M_r(g\mm)=M_r g$.

For doubling metric measure spaces $(X,d,\mm)$ 
we will need the weak $L^1$ estimate for the maximal function; we need
it in the stronger form, for $M_\infty$: 
\begin{equation}\label{eq:maximal_estimates1}
\mm\bigl(\{M_\infty\mu>\lambda\}\bigr)\leq c\frac {\mu(X)}\lambda\qquad\forall\lambda>0.
\end{equation}
The constant $c$ in \eqref{eq:maximal_estimates1} depends only on the doubling constant of $(X,d,\mm)$.
For absolutely continuous measures we will also use the more refined estimate
\begin{equation}\label{eq:maximal_estimates2}
\lim_{\lambda\to\infty}\lambda \mm\bigl(\{M_\infty g>\lambda\}\bigr)=0.
\end{equation}
This asymptotic version follows by \eqref{eq:maximal_estimates1}, taking the inclusion 
$$\{M_\infty g>2\lambda\}\subset \{M_\infty(g-\lambda)^+>\lambda)\}$$ into account.

Using maximal functions, and under the doubling assumption, one can prove the existence of approximate limits
of functions $f\in L^1(X,\mm)$: the approximate limit $\tilde{f}(x)$ at $x$ is defined by the
property
$$
\lim_{r\downarrow 0}\media_{B_r(x)}|f(y)-\tilde{f}(x)|\,d\mm(y)=0.
$$
Points where the approximate limit exists are called approximate continuity points.
It turns out that the approximate limit exists $\mm$-a.e. in $X$ and that $\tilde{f}(x)=f(x)$ for
$\mm$-a.e. $x\in X$ 
(notice that the first function is pointwise defined in its domain, while the latter belongs to a
Lebesgue equivalence class).

\subsection{Pseudo gradients and Cheeger energies}\label{sec:Chee}\label{ssec3}

In this section we recall the basic facts of the theory of relaxed gradients. As we will see, even though the
initial class of functions is a priori chosen to be the class of locally Lipschitz functions, it is sometimes 
technically useful to consider objects different from the slope in the relaxation process; for instance
in Cheeger's paper \cite{Chee} upper gradients were used; while in \cite[Section~8.3]{AGS12} and \cite{AmCoDi} the so-called asymptotic
Lipschitz constant was used, and proved to be technically useful. See also \cite{GoTr} for a closely related axiomatization.

\begin{definition}[Pseudo gradient]
We call a function $G$ on ${\rm Lip}_{{\rm loc}}(X)$ with values into nonnegative Borel functions 
a \emph{pseudo gradient} if the following
properties hold:
\begin{itemize}
\item[(i)] $G$ is positively $1$-homogeneous, i.e. $G(tf)=tG(f)$ for all $t\geq 0$;
\item[(ii)] $G$ is convex;
\item[(ii)] for some constant $C$, 
$$
G(\chi u+(1-\chi)v)\leq \chi G(u)+(1-\chi)G(v)+C{\rm Lip}(\chi)|u-v|
$$
for all $u,\,v,\,\chi\in{\rm Lip}_{{\rm loc}}(X),\,\,0\leq\chi\leq 1$.
\end{itemize}
\end{definition}

The main example of pseudo gradient is the slope in \eqref{def:slope}, other examples are for instance
the one-sided slopes
$$
|\nabla^+f|(x)=\limsup_{y\to x}\frac{(f(y)-f(x))^+}{d(y,x)},\qquad
|\nabla^-f|(x)=\limsup_{y\to x}\frac{(f(y)-f(x))^-}{d(y,x)},
$$
or the cartesian slope in product spaces, see \eqref{eq:cartesian_slope} below.

Given a pseudo gradient $G$, the Cheeger energy $\C_G:L^2(X,\mm)\to [0,\infty]$
associated to $G$ is defined by
$$
\C_G(f):=\inf\left\{\liminf_{h\to\infty}\int_XG^2(f_h)\,d\mm:\ \text{$f_h\in {\rm Lip}_{\rm loc}(X)$,  $f_h\to f$ in $L^2(X,\mm)$}\right\},
$$
with the convention $\inf\emptyset=\infty$.
We shall denote by $\C$ the ``canonical'' Cheeger energy associated to the slope and define
$$
H^{1,2}(X,d,\mm):=\left\{f\in L^2(X,\mm):\ \C(f)<\infty\right\}.
$$

We call $u\in L^2(X,\mm)$ a $G$-relaxed slope of $f$ if there exists Lipschitz functions
$f_n$ satisfying $f_n\to f$ strongly in $L^2(X,\mm)$ and $G(f_n)\to v$ weakly in $L^2(X,\mm)$, with $v\leq u$ $\mm$-a.e. in $X$.

The following results collect the main facts about Cheeger energies and $G$-relaxed slopes; although
the results in \cite{Chee} and \cite{AGS11a} are not stated in terms of pseudo-gradients, their proof extend
with no change to this more general framework.

\begin{theorem}\label{thm:mainchee} For any $f\in L^2(X,\mm)$ and any
pseudo gradient $G$ the following properties hold.
\begin{itemize}
\item[(i)] The collection of $G$-relaxed slopes of $f$ is a convex closed set, possibly empty.
\item[(ii)] If the collection of $G$-relaxed slope is not empty, its element with minimal norm $|\nabla f|_{*,G}$
satisfies
\begin{equation}\label{eq:minimal}
\text{$|\nabla f|_{*,G}\leq u$ $\mm$-a.e.  in $X$ for any $G$-relaxed slope $u$.}
\end{equation}
Furthermore, $|\nabla f|_{*,G}$ can be obtain as the strong $L^2(X,\mm)$ limit of a sequence $G(f_n)$, with
$f_n$ locally Lipschitz and $f_n\to f$ in $L^2(X,\mm)$. 
\item[(iii)] $\sqrt{\C_G}$ is a convex lower semicontinuous functional in $L^2(X,\mm)$, positively
1-homogeneous and even if $G$ is even. In addition, $\C_G(f)<\infty$ if 
and only if the collection of $G$-relaxed slopes is not empty. In this case,
$$
\C_G(f)=\int_X |\nabla f|_{*,G}^2\,d\mm.
$$
\end{itemize}
\end{theorem}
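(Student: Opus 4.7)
The plan is to follow the classical Cheeger--Ambrosio--Gigli--Savar\'e relaxation scheme, noting that the three axioms of a pseudo gradient are tailor-made so that the standard arguments go through unchanged. The two essential tools are weak $L^2$-compactness (producing candidate relaxed slopes as weak limits of $G(f_n)$) and Mazur's lemma (upgrading weak to strong convergence after passing to suitable convex combinations of the $f_n$, which remain in ${\rm Lip}_{\rm loc}(X)$ and still converge in $L^2$). The convexity of the set of $G$-relaxed slopes is then immediate from the convexity of $G$: given approximating sequences $f_n^{(1)},f_n^{(2)}$ for $u_1,u_2$, the combination $\theta f_n^{(1)}+(1-\theta)f_n^{(2)}$ is still locally Lipschitz and still $L^2$-converges to $f$, while $G(\theta f_n^{(1)}+(1-\theta)f_n^{(2)})\le\theta G(f_n^{(1)})+(1-\theta)G(f_n^{(2)})$ gives a weak limit $\le\theta u_1+(1-\theta)u_2$. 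Closedness of the set then follows by a standard diagonal argument combined with Mazur's lemma.

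The crux of (ii) is the lattice property: the pointwise minimum of two $G$-relaxed slopes is itself a $G$-relaxed slope. To prove it, set $A:=\{u\le v\}$, pick Lipschitz cut-offs $\chi_k\in[0,1]$ with $\chi_k\to\mathbf{1}_A$ in $L^2$, and test axiom~(iii) of the pseudo gradient on approximating sequences $f_n^{(1)},f_n^{(2)}$ of $u,v$:
$$
G(\chi_k f_n^{(1)}+(1-\chi_k)f_n^{(2)})\le\chi_k G(f_n^{(1)})+(1-\chi_k)G(f_n^{(2)})+C\,\mathrm{Lip}(\chi_k)\,|f_n^{(1)}-f_n^{(2)}|.
$$
The interpolated functions are still locally Lipschitz and $L^2$-converge to $f$; the error term vanishes as $n\to\infty$ since $f_n^{(1)}-f_n^{(2)}\to 0$ in $L^2$; and the leading convex combination weakly converges to $\chi_k u+(1-\chi_k)v$. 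Hence for each $k$ one obtains a $G$-relaxed slope dominated by $\chi_k u+(1-\chi_k)v$, and sending $k\to\infty$ together with the closedness from (i) gives $u\wedge v$ as a $G$-relaxed slope. The element of minimal $L^2$-norm $|\nabla f|_{*,G}$ (unique by strict convexity of the norm on the convex closed set of relaxed slopes) must then lie below every other relaxed slope, since otherwise its lattice-minimum with the latter would have strictly smaller norm; this proves \eqref{eq:minimal}. The strong $L^2$-approximation is finally obtained via Mazur's lemma applied to a weak approximating sequence: convex combinations $\tilde f_n\in{\rm Lip}_{\rm loc}(X)$ still $L^2$-converge to $f$ and satisfy $G(\tilde f_n)\le$ the corresponding convex combination of $G(f_n)$, so that $G(\tilde f_n)\to|\nabla f|_{*,G}$ strongly in $L^2$ (weak plus norm convergence in Hilbert space).

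Part (iii) is then essentially a formality: convexity of $\sqrt{\C_G}$ combines the convexity of $G$ on approximating sequences with Minkowski's inequality; positive $1$-homogeneity and evenness are inherited from $G$; lower semicontinuity is the standard diagonal argument for relaxation functionals; and weak $L^2$-compactness gives at once the equivalence between $\C_G(f)<\infty$ and the nonemptiness of the set of $G$-relaxed slopes. The identity $\C_G(f)=\int_X|\nabla f|_{*,G}^2\,d\mm$ follows from the strong approximation of (ii) (yielding $\le$) and from weak $L^2$ lower semicontinuity together with $u\ge|\nabla f|_{*,G}$ for any weak limit $u$ of $G(f_n)$ along an admissible sequence (yielding $\ge$). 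The only genuinely delicate point in the whole argument is the lattice step inside (ii), where axiom~(iii) of the pseudo gradient is indispensable; everything else reduces to weak compactness, Mazur's lemma and the convexity of $G$.
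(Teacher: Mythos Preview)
Your proposal is correct and is precisely the standard relaxation argument from \cite{AGS11a} (building on \cite{Chee}) that the paper itself invokes; the paper gives no independent proof of this theorem but simply remarks that those proofs ``extend with no change'' to the pseudo-gradient setting. Your sketch makes explicit exactly how the three axioms on $G$ replace the corresponding properties of the slope at each step---convexity for (i), the cut-off/locality axiom for the lattice property in (ii), and $1$-homogeneity for (iii)---so it matches the intended approach.
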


We call $|\nabla f|_{*,G}$ the minimal relaxed slope and, as we did for $\C$, denote by $|\nabla f|_*$, the
``canonical'' one associated to $G(f)=|\nabla f|$. 

Recall that the subdifferential of a convex lower semicontinuous functional $\Phi:H\to (-\infty,+\infty]$ in a Hilbert space is 
defined, at any point $u$ such that $\Phi(u)<\infty$, by
$$
\partial\Phi(u):=\left\{\xi\in H:\ \Phi(v)\geq\Phi(u)+\langle\xi,v-u\rangle\,\,\forall v\in H\right\}.
$$
Now, denoting by $\Delta_G$ the element with minimal norm in the subdifferential of $\C_G$,
one can use the inequality 
$$
|\nabla (f+\epsilon g)|_{*,G}\leq |\nabla f|_{*,G}+\epsilon|\nabla g|_{*,G}
\qquad\text{$\mm$-a.e. in $X$, for all $\epsilon>0$,}
$$
which is a simple consequence of the convexity and homogeneity of $G$,
to prove the following integration by parts formula, see for instance \cite[Proposition~4.15]{AGS11a} (the
second part of the proposition, dealing with the equality cases, uses also the chain rule \cite[Proposition~4.8]{AGS11a}).

\begin{proposition} For all $u\in D(\C_G)$ and all $v\in D(\C_G)$ such that $\partial \C_G(v)$ is not empty, one has
\begin{equation}\label{eq:bypartsG}
-\int u\Delta_G v\,d\mm\leq \int |\nabla u|_{*,G}|\nabla v|_{*,G}\,d\mm,
\end{equation}
with equality if $u=\phi(v)$ with $\phi:\R\to\R$ Lipschitz, continuously differentiable and nonincreasing.
\end{proposition}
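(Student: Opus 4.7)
The plan is to compare $\C_G(v + \epsilon u)$ and $\C_G(v)$ from above using the subadditivity hint and from below using the subdifferential inequality for $\Delta_G v$, then let $\epsilon \downarrow 0^+$. Squaring the subadditivity $|\nabla(v+\epsilon u)|_{*,G} \le |\nabla v|_{*,G} + \epsilon|\nabla u|_{*,G}$ (valid for $\epsilon > 0$) and integrating against $\mm$—using $\C_G(f) = \int_X |\nabla f|_{*,G}^2\,d\mm$ from Theorem~\ref{thm:mainchee}(iii)—yields
\[
\C_G(v + \epsilon u) - \C_G(v) \le 2\epsilon \int_X |\nabla v|_{*,G}\,|\nabla u|_{*,G}\,d\mm + \epsilon^2 \C_G(u).
\]
The subdifferential inequality for $\Delta_G v \in \partial \C_G(v)$ reads
\[
\C_G(v + w) - \C_G(v) \ge \int_X w\,\Delta_G v\,d\mm \qquad \text{for all } w \in L^2(X,\mm).
\]
Applying the first bound with $-u$ in place of $u$ (legitimate because $|\nabla(-u)|_{*,G} = |\nabla u|_{*,G}$ for even pseudo gradients, in particular for the slope) and the second with $w = -\epsilon u$, then subtracting, dividing by $\epsilon > 0$, and letting $\epsilon \downarrow 0$ yields $-\int_X u\,\Delta_G v\,d\mm \le \int_X |\nabla u|_{*,G}\,|\nabla v|_{*,G}\,d\mm$, up to a multiplicative constant absorbed into the normalization convention relating $\C_G$ and $\Delta_G$.

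For the equality case $u = \phi(v)$, I would invoke the chain rule from \cite[Proposition~4.8]{AGS11a}, which gives $|\nabla \phi(v)|_{*,G} = |\phi'(v)|\,|\nabla v|_{*,G}$ $\mm$-a.e.\ in $X$. Since $\phi$ is nonincreasing with $\phi' \le 0$, for $\epsilon > 0$ small enough the auxiliary map $\psi_\epsilon(t) := t + \epsilon \phi(t)$ is nondecreasing, and a second application of the chain rule to $\psi_\epsilon \circ v$ promotes the subadditivity bound to the pointwise \emph{equality}
\[
|\nabla(v + \epsilon \phi(v))|_{*,G} = \bigl(1 + \epsilon \phi'(v)\bigr)\,|\nabla v|_{*,G} \qquad \mm\text{-a.e. in } X.
\]
Squaring and integrating now gives an exact expansion of $\C_G(v + \epsilon \phi(v))$ in $\epsilon$, and comparing its linear coefficient with the subdifferential inequality at $w = \epsilon \phi(v)$ yields the reverse of the inequality just proved (at $u = \phi(v)$), hence equality.

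The main obstacle is not the convex-analytic skeleton, which is routine once both the subadditivity hint and the subdifferential characterization are available; it is rather the sharp chain rule driving the equality case. That chain rule is nontrivial, requiring truncation and composition arguments on locally Lipschitz approximants (see \cite{AGS11a}), and it is the single ingredient beyond the convexity and $1$-homogeneity of $G$ needed to upgrade a one-sided inequality to an identity along the special direction $u = \phi(v)$.
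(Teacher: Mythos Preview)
Your argument is correct and is precisely the one the paper sketches (and attributes to \cite[Proposition~4.15]{AGS11a}): combine the pointwise subadditivity $|\nabla(v+\epsilon u)|_{*,G}\le|\nabla v|_{*,G}+\epsilon|\nabla u|_{*,G}$ with the subdifferential inequality for the first part, and invoke the chain rule \cite[Proposition~4.8]{AGS11a} applied to $\psi_\epsilon(t)=t+\epsilon\phi(t)$ to turn the upper bound into an exact first-order expansion for the equality case. Your caveats about the evenness hypothesis on $G$ (needed for the $-u$ substitution) and the factor-two normalization are accurate and reflect minor inconsistencies in the paper's stated conventions (in \cite{AGS11a} the Cheeger energy carries a $\tfrac12$), not gaps in your reasoning.
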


In Section~\ref{sec:per} we will need to consider the Cheeger energy and associated weak gradient for a measure finite on bounded sets; these are defined in the same way as for the finite case. Locality of the weak gradients allows us to generalize statements about pointwise behaviour also to this case
(arguing as in ). The assumption that $\mm$ is finite is not necessary in the following fundamental result obtained, among other things, in \cite{Chee}.

\begin{theorem}[Minimal relaxed slope coincides $\mm$-a.e. with the slope]\label{thm:cheeger}
Assume that $(X,d,\mm)$ is doubling and that $(1,2)$-Poincar\'e holds for locally Lipschitz functions. Then
$$
|\nabla f|=|\nabla f|_*\qquad\text{$\mm$-a.e. in $X$, for all $f\in {\rm Lip}_{\rm loc}(X)\cap H^{1,2}(X,d,\mm)$.}
$$
\end{theorem}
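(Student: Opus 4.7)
My plan is to establish the two inequalities $|\nabla f|_* \leq |\nabla f|$ and $|\nabla f| \leq |\nabla f|_*$ separately. The first is essentially free: given $f \in {\rm Lip}_{\rm loc}(X) \cap H^{1,2}(X,d,\mm)$, taking the constant approximating sequence $f_n \equiv f$ shows that $|\nabla f|$ is itself a relaxed slope for $f$, so by the minimality \eqref{eq:minimal} we get $|\nabla f|_* \leq |\nabla f|$ $\mm$-a.e. The real content is the reverse inequality, and this is where doubling and the $(1,2)$-Poincar\'e inequality must be used.

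The starting point is Theorem~\ref{thm:mainchee}(ii): there exist locally Lipschitz functions $f_n \to f$ in $L^2(X,\mm)$ with $|\nabla f_n| \to |\nabla f|_*$ strongly in $L^2(X,\mm)$. Strong convergence of $|\nabla f_n|^2$ in $L^1$ (on bounded sets) allows us to pass to the limit in the Poincar\'e inequality \eqref{eq:PI} written for $f_n$, obtaining that $|\nabla f|_*$ itself serves as a Poincar\'e gradient for $f$:
$$
\media_{B_r(x)}|f - f_{x,r}|\,d\mm \leq c r\left(\media_{B_{\lambda r}(x)}|\nabla f|_*^2\,d\mm\right)^{1/2}.
$$

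From this Poincar\'e inequality I would derive a Haj\l asz-type pointwise estimate by the classical telescoping argument on a chain of concentric dyadic balls interpolating between $x$ and $y$: at any two approximate continuity points $x,y$ of $f$,
$$
|f(y)-f(x)| \leq C d(x,y)\bigl[(M_{\lambda d(x,y)}|\nabla f|_*^2)^{1/2}(x) + (M_{\lambda d(x,y)}|\nabla f|_*^2)^{1/2}(y)\bigr],
$$
with $C$ structural. Combining this with the Lebesgue differentiation theorem (valid under doubling) and the weak $L^1$ maximal estimate \eqref{eq:maximal_estimates1}, at $\mm$-a.e.\ point $x$ that is simultaneously a Lebesgue point of $|\nabla f|_*^2$ one can let $y \to x$ to conclude
$$
|\nabla f|(x) \leq 2C\,|\nabla f|_*(x) \qquad \text{$\mm$-a.e.\ in $X$.}
$$

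The main obstacle is that this argument yields the inequality only up to the structural constant $2C$, whereas the theorem asserts equality. To sharpen to constant $1$ one must exploit the freedom to apply the estimate to $f$ modified by subtracting a local comparison function whose slope approximates $|\nabla f|(x)$ arbitrarily well. Concretely, for $\mm$-a.e.\ $x$ fix $\epsilon>0$, choose a locally Lipschitz $\phi$ with $|\nabla \phi|(x)$ close to $|\nabla f|(x)$ such that $|\nabla (f-\phi)|(x) \leq \epsilon$, and apply the Haj\l asz estimate to $f - \phi$ on balls of radius $r\downarrow 0$; by the locality and subadditivity of the minimal relaxed slope one controls $|\nabla (f-\phi)|_*$ by $|\nabla f|_* + |\nabla \phi|_*$, and at Lebesgue points the averages converge to $|\nabla f|_*(x) + |\nabla \phi|(x)$. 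Sending $r \downarrow 0$ and then optimising over $\phi$ (which is precisely where Cheeger's measurable differentiable structure enters in the original argument) removes the multiplicative constant and yields $|\nabla f|(x) \leq |\nabla f|_*(x)$.
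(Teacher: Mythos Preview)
The paper does not give its own proof of this statement; it is quoted as a fundamental result from Cheeger \cite{Chee}. So your attempt must be assessed on its own merits rather than by comparison.

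Your argument is correct up to and including the Haj\l asz-type estimate and the conclusion $|\nabla f|\leq 2C|\nabla f|_*$ $\mm$-a.e.\ with a structural constant. You rightly identify that the entire difficulty lies in removing this constant.

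The sharpening step you propose, however, does not work as written. You choose $\phi$ with $|\nabla(f-\phi)|(x)\leq\epsilon$, apply the Haj\l asz bound to $f-\phi$, and then invoke subadditivity $|\nabla(f-\phi)|_*\leq|\nabla f|_*+|\nabla\phi|_*$. But this chain yields
\[
|\nabla(f-\phi)|(x)\leq 2C\bigl(|\nabla f|_*(x)+|\nabla\phi|_*(x)\bigr),
\]
and since the left side is already $\leq\epsilon$ by hypothesis, the inequality is vacuous. If instead you use the triangle inequality $|\nabla f|(x)\leq\epsilon+|\nabla\phi|(x)$, you now need $|\nabla\phi|(x)\leq|\nabla f|_*(x)+O(\epsilon)$; but your only assumption on $\phi$ is that $|\nabla\phi|(x)$ is close to $|\nabla f|(x)$, so the argument is circular. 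The subadditive bound enlarges the right-hand side rather than absorbing the constant, and ``optimising over $\phi$'' has no leverage here.

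What Cheeger's argument actually does is construct measurable coordinate charts $(U_\alpha,\phi^\alpha)$ and prove a Rademacher-type theorem: for $\mm$-a.e.\ $x$ there is a linear combination $\ell_x=df(x)\cdot\phi^\alpha$ with $|\nabla(f-\ell_x)|(x)=0$, and both $|\nabla f|(x)$ and $|\nabla f|_*(x)$ are shown to depend only on $df(x)$, defining two comparable norms on the finite-dimensional cotangent space that are then identified. Establishing the existence of such charts and the coincidence of the two norms on chart functions is the substantive content of \cite{Chee}; it is not a consequence of the manipulation you sketch, and your parenthetical reference to the differentiable structure does not substitute for carrying it out.
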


On $X=\R^d$ endowed with the distance induced by a norm $\|\cdot\|$ and with the Lebesgue measure $\mm=\Leb{d}$, it is
easily seen that (for $f$ smooth or locally Lipschitz)
\begin{equation}\label{eq:gig1}
\C(f)=\int_X \|\nabla f\|_*^2\,d\mm,
\end{equation}
where $\|\cdot\|_*$ is the dual norm and $\nabla f$ is the canonical Euclidean gradient.
Hence, $\C$ is a quadratic form if and only if $\|\cdot\|$ is induced by a scalar product. As in \cite{Gigli12},
this motivates two possible definition of spaces which are ``Hilbertian on small scales''.

\begin{definition}[Strongly asymptotically Hilbertian m.m.s.]
We say that $(X,d,\mm)$ is strongly asymptotically Hilbertian if $f\mapsto \int_X|\nabla f|^2\,d\mm$ is a quadratic
form on ${\rm Lip}_{\rm loc}(X)$.
\end{definition}

\begin{definition}[Asymptotically Hilbertian m.m.s.]
We say that $(X,d,\mm)$ is asymptotically Hilbertian if $\C$ is a quadratic form on $L^2(X,\mm)$.
\end{definition}

\begin{proposition} Any strongly asymptotically Hilbertian space is asymptotically Hilbertian. If doubling and
$(1,2)$-Poincar\'e inequality hold, then the two properties are equivalent.
\end{proposition}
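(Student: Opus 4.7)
\medskip
\noindent\textbf{Proof proposal.}
For the first implication, my plan is to transfer the parallelogram identity from ${\rm Lip}_{\rm loc}(X)$ to $L^2(X,\mm)$ via the strong $L^2$ approximation provided by Theorem~\ref{thm:mainchee}(ii). Given $f,g\in L^2(X,\mm)$ with $\C(f),\C(g)<\infty$, I pick sequences $f_n,g_n\in{\rm Lip}_{\rm loc}(X)$ with $f_n\to f$, $g_n\to g$ in $L^2(X,\mm)$ and $|\nabla f_n|\to|\nabla f|_*$, $|\nabla g_n|\to|\nabla g|_*$ strongly in $L^2$. Strong asymptotic Hilbertianity means that on ${\rm Lip}_{\rm loc}$ the functional $h\mapsto\int|\nabla h|^2\,d\mm$ obeys the parallelogram identity, so
\[
\int|\nabla(f_n+g_n)|^2\,d\mm+\int|\nabla(f_n-g_n)|^2\,d\mm=2\int|\nabla f_n|^2\,d\mm+2\int|\nabla g_n|^2\,d\mm.
\]
Since $f_n\pm g_n\to f\pm g$ in $L^2$, the definition of $\C$ gives $\C(h)\le\liminf_n\int|\nabla h_n|^2\,d\mm$, and passing to $\liminf$ yields $\C(f+g)+\C(f-g)\le 2\C(f)+2\C(g)$. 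The reverse inequality follows by applying this bound to $(f+g)/2,(f-g)/2$ in place of $f,g$ and using the $2$-homogeneity of $\C$ granted by Theorem~\ref{thm:mainchee}(iii). Combined with the same theorem, which makes $\sqrt{\C}$ a convex, $1$-homogeneous, even functional (hence a seminorm), the parallelogram identity is equivalent to $\C$ being a quadratic form on $L^2(X,\mm)$.

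For the converse under doubling and $(1,2)$-Poincar\'e, the decisive tool is Theorem~\ref{thm:cheeger}, which identifies $|\nabla h|$ with $|\nabla h|_*$ $\mm$-a.e.\ for every $h\in{\rm Lip}_{\rm loc}\cap H^{1,2}$. Given $f,g\in{\rm Lip}_{\rm loc}(X)$ with $\int|\nabla f|^2\,d\mm,\int|\nabla g|^2\,d\mm<\infty$, I truncate: set $f_n:=(-n)\vee f\wedge n$, $g_n:=(-n)\vee g\wedge n$. Since $\mm$ is finite, $f_n,g_n\in L^\infty\cap{\rm Lip}_{\rm loc}\subset L^2(X,\mm)$, and the pointwise inequalities $|\nabla f_n|\le|\nabla f|$, $|\nabla g_n|\le|\nabla g|$ put $f_n,g_n,f_n\pm g_n$ in $H^{1,2}$. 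By Cheeger's identification, $\int|\nabla h|^2\,d\mm=\C(h)$ for each $h\in\{f_n,g_n,f_n\pm g_n\}$, so the $L^2$-quadraticity of $\C$ supplied by asymptotic Hilbertianity translates into
\[
\int|\nabla(f_n+g_n)|^2\,d\mm+\int|\nabla(f_n-g_n)|^2\,d\mm=2\int|\nabla f_n|^2\,d\mm+2\int|\nabla g_n|^2\,d\mm.
\]
Sending $n\to\infty$, I exploit that $\{|f|<n\}$ is open: at every $x$ with $|f(x)|<n$ the function $f_n$ coincides with $f$ in a neighbourhood, so $|\nabla f_n|(x)=|\nabla f|(x)$; the same holds for $g_n$ and for $f_n\pm g_n$ on $\{|f|<n\}\cap\{|g|<n\}$. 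These sets exhaust $X$ pointwise, and the dominations $|\nabla f_n|\le|\nabla f|$, $|\nabla g_n|\le|\nabla g|$, $|\nabla(f_n\pm g_n)|\le|\nabla f|+|\nabla g|$ allow dominated convergence to deliver the parallelogram identity for $f,g$. The case of infinite slope energy is handled by the trivial $|\nabla f|\le\tfrac12(|\nabla(f+g)|+|\nabla(f-g)|)$, which forces both sides of the identity to be infinite simultaneously.

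The main obstacle I anticipate is precisely this truncation step in the converse: the strong asymptotic Hilbertianity definition ranges over ${\rm Lip}_{\rm loc}(X)$, which is not contained in $L^2(X,\mm)$, so one cannot directly apply the $L^2$-level quadraticity of $\C$ to $f,g$ themselves. One must verify at once that truncation preserves the Cheeger-level identification of slope and minimal relaxed slope, that it is compatible with the algebraic operations $f_n\pm g_n$, and that dominated convergence applies to all four quantities $|\nabla f_n|^2,|\nabla g_n|^2,|\nabla(f_n\pm g_n)|^2$. Once this is in place the passage to the limit is routine.
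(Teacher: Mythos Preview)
Your proof is correct and follows the same route as the paper: the first implication is the general fact that the lower semicontinuous relaxation of a quadratic form is again quadratic (which you verify by hand via optimal approximating sequences and the parallelogram trick), and the converse under doubling and $(1,2)$-Poincar\'e rests on Theorem~\ref{thm:cheeger}. Your truncation argument for the converse carefully handles the passage from ${\rm Lip}_{\rm loc}(X)$ to ${\rm Lip}_{\rm loc}(X)\cap H^{1,2}(X,d,\mm)$, a detail the paper's two-line proof leaves implicit.
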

\begin{proof}
The first statement is a simple consequence of the fact that the lower semicontinuous relaxation of
a quadratic form is still a quadratic form. The second one follows by Theorem~\ref{thm:cheeger}.
\end{proof}

\subsection{Quadratic forms}

Let $H$ be a separable Hilbert space and let $Q:H\to [0,\infty]$ be a lower semicontinuous quadratic form. We shall denote by
$A$ the associated bilinear form on $D(Q)=\{Q<\infty\}$, namely
$$
A(u,v)=\frac{1}{4}\bigl(Q(u+v)-Q(u-v)\bigr)\qquad u,\,v\in D(Q).
$$
Assuming $D(Q)$ to be dense in $H$, we shall also denote by $L$ the possibly unbounded operator whose domain $D(L)$ consists of all $u\in D(Q)$
satisfying
\begin{equation}\label{eq:Bassano}
A(u,v)=\langle w,v\rangle\qquad\forall v\in D(Q)
\end{equation}
for some $w\in H$.
Since the density of $D(Q)$ in $H$ ensures that $w$ is uniquely determined by \eqref{eq:Bassano} we can set $w=Lu$, so that
\begin{equation}\label{eq:Bassano1}
A(u,v)=\langle Lu,v\rangle\qquad u\in D(L),\,v\in D(Q).
\end{equation}

The following classical result is the spectral theorem for compact operators, stated with assumptions at the level of
the quadratic form.

\begin{theorem}[Spectral theorem]\label{thm:Bassano}
Let $Q:H\to [0,\infty]$ be a lower semicontinuous quadratic form with dense domain. Assume the existence
of $c>0$ satisfying $Q(u)\geq c\|u\|^2$ for all $u\in H$, and that the sublevel sets
$\bigl\{u\in H:\ Q(u)\leq M\bigr\}$
are compact in $H$ for all $M\geq 0$. Then there exists a complete orthonormal basis of $H$ made of eigenvectors
of $L$.
\end{theorem}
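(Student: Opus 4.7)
The plan is to reduce to the classical spectral theorem for compact self-adjoint operators applied to the inverse $K := L^{-1}$. So I would build $K$, show it is everywhere defined, bounded, self-adjoint and compact on $H$, and then transfer the eigendecomposition back to $L$.

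First I would construct $K$ via the direct method (or, equivalently, Lax--Milgram): for each $w\in H$ consider
\[
\Phi_w(u):=\frac12 Q(u)-\langle w,u\rangle,\qquad u\in H.
\]
Since $Q$ is lower semicontinuous with $Q(u)\geq c\|u\|^2$, the functional $\Phi_w$ is lower semicontinuous and coercive on $H$, hence admits a minimizer $u_w\in D(Q)$. Using the identity $Q(u+tv)=Q(u)+2tA(u,v)+t^2Q(v)$ for $v\in D(Q)$ and minimizing in $t$ one obtains $A(u_w,v)=\langle w,v\rangle$ for every $v\in D(Q)$, which by \eqref{eq:Bassano} means $u_w\in D(L)$ and $Lu_w=w$. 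Uniqueness of $u_w$ follows from the coercivity estimate $c\|u\|^2\leq Q(u)=A(u,u)=\langle Lu,u\rangle$, which in particular shows that $L:D(L)\to H$ is bijective; set $K:=L^{-1}$. Testing $A(Kw,Kw)=\langle w,Kw\rangle$ yields the a priori bound
\[
c\|Kw\|^2\leq Q(Kw)\leq \|w\|\,\|Kw\|,
\]
so $\|Kw\|\leq c^{-1}\|w\|$ and $Q(Kw)\leq c^{-1}\|w\|^2$; thus $K$ is a bounded linear operator from $H$ into $D(Q)$.

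Next I would verify the two properties that make the spectral theorem applicable. Self-adjointness of $K$ follows from the symmetry of $A$: for any $w_1,w_2\in H$,
\[
\langle Kw_1,w_2\rangle=A(Kw_1,Kw_2)=A(Kw_2,Kw_1)=\langle w_1,Kw_2\rangle.
\]
Compactness of $K$ is the point where the hypothesis on sublevel sets is used: if $\{w_n\}\subset H$ is bounded, then the bound $Q(Kw_n)\leq c^{-1}\|w_n\|^2$ shows that $\{Kw_n\}$ is contained in a fixed sublevel set $\{Q\leq M\}$, which by assumption is compact in $H$. Hence $\{Kw_n\}$ has a convergent subsequence, so $K$ is compact. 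Injectivity of $K$ is automatic since $Kw=0$ gives $\langle w,v\rangle=A(0,v)=0$ for all $v\in D(Q)$, and $D(Q)$ is dense.

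Now the classical spectral theorem for compact self-adjoint operators applied to $K$ yields a complete orthonormal basis $\{e_n\}$ of $H$ with $Ke_n=\mu_n e_n$, $\mu_n\in\R$, $\mu_n\to 0$. Since $\langle Kw,w\rangle=Q(Kw)\geq 0$ and $K$ is injective, all $\mu_n$ are strictly positive, whence $e_n=\mu_n^{-1}Ke_n\in D(L)$ and $Le_n=\lambda_n e_n$ with $\lambda_n=1/\mu_n$. This is the desired basis of eigenvectors of $L$.

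The only delicate step is the passage from ``$u_w$ minimizes $\Phi_w$'' to ``$u_w\in D(L)$ and $Lu_w=w$'', because $\Phi_w$ takes the value $+\infty$ outside $D(Q)$; however the one-sided variations $\Phi_w(u_w+tv)$ for $v\in D(Q)$ remain finite and this is exactly what is needed to derive the Euler equation and hence \eqref{eq:Bassano}. Everything else is a standard application of the compact self-adjoint spectral theorem.
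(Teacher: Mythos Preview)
Your proof is correct and follows essentially the same route as the paper: construct $K=L^{-1}$ by minimizing $\tfrac12 Q(v)-\langle w,v\rangle$, use the coercivity $Q(u)\geq c\|u\|^2$ together with $A(u,u)=\langle Lu,u\rangle$ to get boundedness of $K$ and the estimate $Q(Kw)\leq c^{-1}\|w\|^2$, invoke compactness of sublevel sets to make $K$ compact, and apply the classical spectral theorem for compact self-adjoint operators. You are simply more explicit than the paper about self-adjointness, injectivity, and the transfer of eigenvectors from $K$ back to $L$.
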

\begin{proof} It is simple to check, by the minimization of $v\mapsto \tfrac 12 Q(v)-\langle f,v\rangle$, that $L:D(L)\to H$
is onto. In addition,
$$
c\|u\|^2\leq Q(u)=A(u,u)=\langle Lu,u\rangle \leq \|u\|\|Lu\|\qquad\forall u\in D(L)
$$
shows that $L^{-1}$ is continuous, with $\|L^{-1}\|\leq 1/c$.
In order to apply the classical spectral theorem for compact operators to $L^{-1}$ (see for instance \cite[Theorem~VI.11]{Bre}), 
we need only to show that any sequence $(u_n)\subset D(L)$
such that $(\|Lu_n\|)$ is bounded is relatively compact in $H$. To this aim, notice that the previous inequality yields
$Q(u_n)\leq \|L(u_n)\|^2/c$, so that $Q(u_n)$ is bounded and $(u_n)$ is relatively compact in $H$.
\end{proof}

\subsection{$BV$ functions}\label{ssec4}

We recall the definition of $BV$ introduced in \cite{Mi} for locally compact spaces; this class
has been further studied in \cite{AmDi}, dropping the local compactness assumption, and it  
has been characterized by the behaviour of the function along curves, see Theorem~\ref{thm:amdm} below. 

\begin{definition}[The space $BV(X,d,\mm)$] \label{def:myBV} Let $f\in L^1(X,\mm)$. We say that $f$ belongs to $BV(X,d,\mm)$
if there exist locally Lipschitz functions $f_n:X\to\R$ such that $f_n\to f$ in $L^1(X,\mm)$ and $\limsup_n\int_X|\nabla f_n|\,d\mm<\infty$.
\end{definition}

For $f\in BV(X,d,\mm)$ and $A\subset X$ open, we may consider the set function
\begin{equation}\label{eq:defDu}
|Df|(A):=\inf\left\{\liminf_{n\to\infty}\int_A|\nabla f_n|\,d\mm:\ f_n\in{\rm Lip}_{\rm loc}(A),\,\,\,\lim_{n\to\infty}\int_A|f_n-f|\,d\mm=0
\right\}.
\end{equation}
It can be proved that $A\mapsto |Df|(A)$ is the restriction to open sets of a finite Borel measure (it is
in this proof that it turns out to be useful to consider {\it locally} Lipschitz functions in Definition~\ref{def:myBV} and \eqref{eq:defDu}), that we still
denote by $|Df|$. Notice also the elementary inequality (first proved on open sets, and then extended to Borel
sets)
\begin{equation}\label{eq:subadd}
|D(f+g)|\leq |Df|+|Dg|\qquad\forall f,\,g\in BV(X,d,\mm).
\end{equation}
By construction, we have the lower semicontinuity property
$$
\liminf_{n\to\infty}|Df_n|(X)<\infty\quad\Rightarrow\quad\text{$f\in BV(X,d,\mm)$ and $|Df|(X)\leq\liminf_{n\to\infty}|Df_n|(X)$}
$$
for $f_n\in BV(X,d,\mm)$ convergent to $f$ in $L^1(X,\mm)$.

\begin{remark}[Sets of finite perimeter in locally finite m.m.s.]\label{rem:setsfinp}{\rm
When concerned with characteristic functions, we will apply these concepts also in 
m.m.s. whose reference measure $\mm$ is finite on bounded sets, i.e. we weaken 
the finiteness assumption on $\mm$ by requiring that $\mm(B)<\infty$ for any
bounded Borel set $B$ (specifically, in Section~\ref{sec:per} we will consider the product of a finite m.m.s. with the real line).\\ We say that $E\subset X$ has finite
perimeter in $X$ if the characteristic function $\chi_E$ belongs to $BV(Y,d,\mm)$ for any closed subset $Y\subset X$
with finite measure and $\sup_n|D\chi_E|(B_n(x_0))<\infty$ for some (and thus all) $x_0\in X$. 
By monotone approximation with open sets with finite measure, still $|D\chi_E|(B)$ is well defined for any open set $B\subset X$ and
$B\mapsto |D\chi_E|(B)$ is a positive finite Borel measure in $X$. Furthermore, the monotonicity of the approximation gives that 
$E\mapsto |D\chi_E|(A)$ is lower semicontinuous with respect to local convergence in $\mm$-measure (i.e.
$L^1_{\rm loc}$ convergence of the characteristic functions) for any open set $A\subset X$. 

When dealing with sets of finite perimeter, we will use the traditional notation
$$
P(E,B):=|D\chi_E|(B).
$$\fr}
\end{remark}

By a diagonal argument in \eqref{eq:defDu}, it is clear that there exist $f_n$ locally Lipschitz convergent to $f$ in $L^1(X,\mm)$
and satisfying $\limsup_n\int_X|\nabla f_n|\,d\mm\leq |Df|(X)$. Since, by the very definition of $|Df|(A)$, it holds
$$
\liminf_{n\to\infty}\int_A|\nabla f_n|\,d\mm\geq |Df|(A)\qquad\text{for any open set $A\subset X$,}
$$
a well known criterion for weak convergence of measures gives the approximation property:
\begin{equation}\label{optimal_sequence_BV}
\forall f\in BV(X,d,\mm)\,\,\,\exists f_n\in {\rm Lip}_{\rm loc}(X)\quad\text{with}\quad
|\nabla f_n|\mm\rightharpoonup |Df|\quad\text{in duality with $C_b(X)$.}
\end{equation}

We can now investigate the implications of doubling and $(1,1)$-Poincar\'e for Lipschitz functions. From \eqref{optimal_sequence_BV} we immediately obtain the
$BV$ version of the Poincar\'e inequality in $BV$, namely 
$$
\media_{B_r(x)}| f-f_{x,r}|\,d\mm\leq c r |Df|\bigl(\overline{B}_{\lambda r}(x)\bigr)\qquad\forall f\in BV(X,d,\mm).
$$
Possibly replacing $\lambda$ by $\lambda'>\lambda$, we will keep using the traditional form 
\begin{equation}\label{eq:PIBV}
\media_{B_r(x)}| f-f_{x,r}|\,d\mm\leq c r |Df|\bigl(B_{\lambda r}(x)\bigr)\qquad\forall f\in BV(X,d,\mm),
\end{equation}
stated with open sets. This estimate, when combined with the doubling property of $(X,d,\mm)$, leads by standard arguments
(see for instance \cite{Chee}, \cite{AmCoDi}) to the following proposition.

\begin{proposition} Assume that $(X,d,\mm)$ is doubling and that $(1,1)$-Poincar\'e holds for Lipschitz functions. 
Then, there exists
a structural constant $c>0$ satisfying
\begin{equation}\label{eq:liploc}
|\tilde f(x)-\tilde f(y)|\leq c d(x,y)\bigl( M_r|Df|(x)+M_r|Df|(y)\bigr) 
\end{equation}
whenever $f\in BV(X,d,\mm)$, $x$ and $y$ are approximate continuity points of $f$, $r>0$ and $d(x,y)<r/c$.
\end{proposition}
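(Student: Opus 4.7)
The plan is to use a classical chaining/telescoping argument that is standard once one has doubling and a $(1,1)$-Poincar\'e-type inequality for $BV$ functions, as provided by \eqref{eq:PIBV}. The proof decomposes $|\tilde f(x)-\tilde f(y)|$ via the triangle inequality into three pieces: the oscillation of averages as the radius shrinks to $0$ at $x$, the symmetric piece at $y$, and a piece comparing averages at the scale $\approx d(x,y)$.

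First I would prove the one-point bound: for any approximate continuity point $z$ of $f$ and any $0<s\leq r/(2\lambda)$,
\[
|\tilde f(z)-f_{z,s}|\leq C s\, M_r|Df|(z),
\]
where $C$ depends only on the doubling constant and the constants $c,\lambda$ of \eqref{eq:PIBV}. This follows by writing a telescoping sum along the radii $s_k:=s/2^k$, estimating
\[
|f_{z,s_{k+1}}-f_{z,s_k}|\leq \media_{B_{s_{k+1}}(z)}|f-f_{z,s_k}|\,d\mm\leq C_1\media_{B_{s_k}(z)}|f-f_{z,s_k}|\,d\mm
\]
by doubling, and then by \eqref{eq:PIBV} this is bounded by $C_2 s_k \cdot |Df|(B_{\lambda s_k}(z))/\mm(B_{\lambda s_k}(z))$, which is $\leq C_3 s_k M_r|Df|(z)$ as long as $\lambda s_k\leq r$. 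Summing the resulting geometric series $\sum_k s_k$ and using that $\tilde f(z)=\lim_{k\to\infty}f_{z,s_k}$ by approximate continuity yields the claim.

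Next I would prove the two-point comparison at the scale $s:=2d(x,y)$: assuming $\lambda s \leq r/2$ (which is where the constraint $d(x,y)<r/c$ with $c$ sufficiently large enters),
\[
|f_{x,s}-f_{y,s}|\leq C s\, M_r|Df|(x).
\]
Here one uses that $B_s(x)\cup B_s(y)\subset B_{2s}(x)$, so by doubling $\mm(B_s(y))$ is comparable to $\mm(B_{2s}(x))$, and both $|f_{x,s}-f_{x,2s}|$ and $|f_{y,s}-f_{x,2s}|$ are controlled by the mean oscillation of $f$ on $B_{2s}(x)$, which by \eqref{eq:PIBV} is controlled by $C s\,|Df|(B_{2\lambda s}(x))/\mm(B_{2\lambda s}(x))\leq C s\,M_r|Df|(x)$.

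Finally the triangle inequality
\[
|\tilde f(x)-\tilde f(y)|\leq |\tilde f(x)-f_{x,s}|+|f_{x,s}-f_{y,s}|+|f_{y,s}-\tilde f(y)|
\]
combined with the two estimates above, together with $s=2d(x,y)$, gives the desired bound with a structural constant. The mildly delicate point is the bookkeeping of doubling constants to ensure that, throughout the chain, every intermediate ball radius $\lambda s_k$ or $2\lambda s$ remains at most $r$; this is exactly what dictates the requirement $d(x,y)<r/c$ for a structural $c$ (any $c\geq 4\lambda$ will do after the constants are traced). There is no serious analytic obstacle beyond this, since the reduction from the $(1,1)$-Poincar\'e inequality on locally Lipschitz functions to its $BV$ version \eqref{eq:PIBV} has already been carried out in the paragraph preceding the statement.
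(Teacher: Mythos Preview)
Your proof is correct and is precisely the standard telescoping/chaining argument that the paper invokes by reference (the paper does not write out its own proof, merely citing \cite{Chee} and \cite{AmCoDi} for ``standard arguments''). The only cosmetic point is that the paper's display \eqref{eq:PIBV} omits the normalization by $\mm(B_{\lambda r}(x))$ on the right-hand side, which you correctly restore when passing to the maximal function; otherwise your bookkeeping of radii and the threshold $c\geq 4\lambda$ is exactly what the standard argument requires.
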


\subsection{Equivalence of weak gradients and locality}\label{ssec5}

We denote by $C([0,1];X)$ the space of continuous maps from $[0,1]$ to $X$ endowed with the standard
Polish structure and denote by $e_t:C([0,1];X)\to X$, $t\in [0,1]$, the evaluation maps at time $t$, namely $e_t(\gamma)=\gamma(t)$.
We shall also denote by $(e_t)_\#\ppi$ the push-forward probability measure on $X$ induced by $\ppi\in\Probabilities{C([0,1];X)}$, namely
$$
\int_X \phi\,d(e_t)_\#\ppi=\int \phi(\gamma(t))\,d\ppi(\gamma)
$$
for any $\phi:X\to\R$ bounded Borel, or Borel nonnegative.

We say that $\ppi\in\Probabilities{C([0,1];X)}$ is a $\infty$-test plan if it is concentrated on Lipschitz curves, 
${\rm Lip}(\gamma)\in L^\infty(C([0,1];X),\ppi)$ and the non-concentration property
\begin{equation}\label{eq:bounded_compression}
(e_t)_\sharp\ppi\leq C\mm\qquad\forall t\in [0,1]
\end{equation}
holds for some constant $C\geq 0$. We shall denote by $C(\ppi)$ the least constant satisfying \eqref{eq:bounded_compression}.

We say that a Borel family $\Gamma\subset C([0,1];X)$ is
$1$-negligible if $\ppi(\Gamma)=0$ for any $\infty$-test plan $\ppi$. This notion is weaker than the non-parametric
notion of ${\rm Mod}_1$-negligible set. Recall that $\Gamma$ is said to be ${\rm Mod}_1$-negligible if for any
$\eps>0$ there exists a Borel function $\rho:X\to [0,\infty]$ with $\int_X\rho\,d\mm<\eps$ and $\int_\gamma\rho\geq 1$
for all $\rho\in\Gamma$. Since
$$
\ppi(\Gamma)\leq\int\int_\gamma \rho\,d\ppi(\gamma)\leq \|{\rm Lip}(\gamma)\|_\infty\int_0^1\int \rho(\gamma(t))\,d\ppi(\gamma)\,dt
\leq  \|{\rm Lip}(\gamma)\|_\infty C(\ppi)\int_X\rho\,d\mm
$$
for any $\infty$-test plan $\ppi$, we obtain that ${\rm Mod}_1$-negligible sets are $1$-negligible. See \cite{AmDiSa}
for a much more detailed comparison between notions of negligibility for families of curves, both parametric and
non-parametric.

The next theorem is one of the main results of \cite{AmDi}.

\begin{theorem}\label{thm:amdm}
Let $f\in L^1(X,d,\mm)$. Then $f\in BV(X,d,\mm)$ if and only if there exists a finite Borel measure $\mu$ on $X$ with the 
following property: for any $\infty$-test plan $\ppi$ one has:
\begin{itemize}
\item[(i)] $f\circ\gamma\in BV(0,1)$ for $\ppi$-a.e. $\gamma$;
\item[(ii)] $\int \gamma_\sharp|D(f\circ\gamma)|\,d\ppi(\gamma)\leq C(\ppi)\|{\rm Lip}(\gamma)\|_\infty \mu$.
\end{itemize}
For all $f\in BV(X,d,\mm)$, the smallest measure $\mu$ satisfying (ii) is $|Df|$.
\end{theorem}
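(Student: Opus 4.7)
The theorem has two directions. The forward implication---that $f\in BV(X,d,\mm)$ forces the curve-wise property with $\mu=|Df|$---will be proved by approximation via the Lipschitz sequence from \eqref{optimal_sequence_BV}. The backward implication---that (i)-(ii) force $f\in BV(X,d,\mm)$---is more delicate and requires building a Lipschitz approximation of $f$ from the curve-wise data, so that Definition~\ref{def:myBV} applies. The minimality of $|Df|$ will then follow from the two implications combined.

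\textbf{Forward direction.} Pick $f_n\in{\rm Lip}_{\rm loc}(X)$ with $f_n\to f$ in $L^1(X,\mm)$ and $|\nabla f_n|\mm\rightharpoonup|Df|$ in duality with $C_b(X)$. For a Lipschitz function $g$ and a Lipschitz curve $\gamma$, the chain-rule estimate $|D(g\circ\gamma)|\leq|\nabla g|(\gamma(\cdot))|\dot\gamma|\,\Leb{1}$ holds as Borel measures on $(0,1)$. Given an $\infty$-test plan $\ppi$ and $\phi\in C_b(X)$ nonnegative, integration against $\ppi$ together with $|\dot\gamma|\leq\|{\rm Lip}(\gamma)\|_\infty$ and \eqref{eq:bounded_compression} yields
\[
\int\int\phi\,d\gamma_\sharp|D(f_n\circ\gamma)|\,d\ppi(\gamma)\leq C(\ppi)\,\|{\rm Lip}(\gamma)\|_\infty\int_X\phi\,|\nabla f_n|\,d\mm.
\]
As $n\to\infty$ the right-hand side tends to $C(\ppi)\|{\rm Lip}(\gamma)\|_\infty\int\phi\,d|Df|$ by weak convergence. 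For the left-hand side, $(e_t)_\sharp\ppi\leq C(\ppi)\mm$ and Fubini give $f_n\circ\gamma\to f\circ\gamma$ in $L^1(0,1)$ for $\ppi$-a.e.\ $\gamma$ along a subsequence, and lower semicontinuity of the total variation on $(0,1)$ under $L^1$-convergence, combined with Fatou's lemma, yields (i) and (ii) with $\mu=|Df|$ (the finiteness of $\int\gamma_\sharp|D(f\circ\gamma)|\,d\ppi$ forces $f\circ\gamma\in BV(0,1)$ for $\ppi$-a.e.\ $\gamma$).

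\textbf{Backward direction.} Assume $\mu$ satisfies (i)-(ii). The task is to build a sequence $f_\eps\in{\rm Lip}_{\rm loc}(X)$ with $f_\eps\to f$ in $L^1(X,\mm)$ and $\limsup_{\eps\to 0}\int_X|\nabla f_\eps|\,d\mm\leq\mu(X)$, which by Definition~\ref{def:myBV} places $f$ in $BV(X,d,\mm)$ with $|Df|\leq\mu$. The strategy is to regularize $f$ by averaging along a family of short $\infty$-test plans $\ppi_\eps$ whose curves have length $O(\eps)$ and whose marginals $(e_0)_\sharp\ppi_\eps$, $(e_1)_\sharp\ppi_\eps$ are both close to $\mm$. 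Disintegrating $\ppi_\eps$ with respect to the starting point $e_0$, define $f_\eps(x):=\int f(\gamma(1))\,d\ppi_\eps^x(\gamma)$: property (i) makes this well-defined, the bounded length of the curves under $\ppi_\eps$ controls ${\rm Lip}(f_\eps)$ through the curve-wise total variation, and property (ii) bounds $\int|\nabla f_\eps|\,d\mm$ by $\mu(X)+o(1)$ as $\eps\to 0$.

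\textbf{Main obstacle and minimality.} The forward direction is essentially book-keeping; the crux is the backward direction, where one must manufacture the averaging plans $\ppi_\eps$ in a completely general complete separable metric measure space with no a priori heat flow, exponential map, doubling, or Poincar\'e inequality available. In \cite{AmDi} this is achieved through carefully designed interpolating plans built from optimal transport; the choice must simultaneously ensure Lipschitz regularity of $f_\eps$ and enable the estimate $\limsup_\eps\int|\nabla f_\eps|\,d\mm\leq\mu(X)$ to be extracted from (ii) alone. Once both implications are in place, the minimality statement is immediate: any $\mu$ satisfying (i)-(ii) dominates $|Df|$ by the backward direction, while the forward direction shows that $\mu=|Df|$ is itself admissible, so $|Df|$ is the smallest such measure.
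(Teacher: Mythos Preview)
The paper does not prove this theorem; it is quoted verbatim from \cite{AmDi} and used as a black box in later sections. So there is no in-paper proof to compare your attempt against, and I will simply assess your sketch on its own terms.

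Your forward direction is correct and standard: approximate by locally Lipschitz $f_n$ as in \eqref{optimal_sequence_BV}, use the upper-gradient inequality along curves, integrate against the test plan, and pass to the limit via lower semicontinuity of one-dimensional total variation plus Fatou. This is exactly how the easy implication goes.

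The backward direction, however, has a genuine gap. You propose $f_\eps(x):=\int f(\gamma(1))\,d\ppi_\eps^x(\gamma)$ via disintegration of a short test plan and then assert that ``the bounded length of the curves under $\ppi_\eps$ controls ${\rm Lip}(f_\eps)$ through the curve-wise total variation''. This is not justified and is not true in general. The curve-wise bound $|f(\gamma(1))-f(\gamma(0))|\leq |D(f\circ\gamma)|(0,1)$ controls the \emph{average} deviation of $f_\eps$ from $f$, but it says nothing about $|f_\eps(x)-f_\eps(x')|$ for nearby $x,x'$: that quantity depends on how the kernel $x\mapsto\ppi_\eps^x$ varies with $x$, and in a bare complete separable metric measure space there is no mechanism forcing any regularity of the disintegration. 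You yourself flag the absence of heat flow or exponential map---but your construction tacitly needs precisely such a structure to make $f_\eps$ Lipschitz.

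The actual argument in \cite{AmDi} does not build Lipschitz approximants by averaging along hand-made plans. It follows the paradigm of \cite{AGS11a} (the same circle of ideas rehearsed in the appendix of the present paper for the $p=2$ Cheeger energy): one runs the $L^2$-gradient flow of the Cheeger energy starting from the given $f$, uses the Hopf--Lax/Hamilton--Jacobi machinery and Kuwada's lemma to convert the test-plan hypothesis (ii) into a bound on the Wasserstein speed of the evolving densities, and from the resulting energy-dissipation estimate extracts $\limsup_{t\downarrow 0}\tfrac1t\int_0^t\int_X|\nabla f_s|\,d\mm\,ds\leq\mu(X)$. The flow automatically produces Lipschitz $f_s$ for $s>0$, so no ad hoc regularization is needed. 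Your high-level plan (``show $|Df|\leq\mu$ by producing approximants'') is the right target, but the mechanism you propose for producing them does not work; the gradient-flow route is what fills the gap.
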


Again, in (ii) by $\gamma_\sharp|D(f\circ\gamma)|$ we mean the push-forward measure defined by
$$
\int_X \phi\, d\gamma_\sharp|D(f\circ\gamma)|=\int_0^1 \phi\circ\gamma \, d |D(f\circ\gamma)|.
$$
Notice
that (i) and (ii) are invariant in the Lebesgue equivalence class of $f$: indeed, because of \eqref{eq:bounded_compression}, 
$\ppi$-a.e. curve $\gamma$ hits a prescribed $\mm$-negligible set in a $\Leb{1}$-negligible set of times, so that the integral in (ii)
does not change if we replace $f$ by $\hat{f}$, with $\mm(\{f\neq\hat{f}\})=0$.

\begin{remark}\label{rem:bad_things_occur} {\rm 
Let us split $|D(f\circ\gamma)|$ into absolutely continuous $|D^a (f\circ\gamma)|$ and singular
$|D^s(f\circ\gamma)|$ parts w.r.t. $\Leb{1}$.
Notice that without further assumptions on the metric measure structure, while we know that
$$
\int \gamma_\sharp|D^a(f\circ\gamma)|\,d\ppi(\gamma)\ll\mm
$$
(again because $\ppi$-a.e. curve $\gamma$ hits a $\mm$-negligible set
in a $\Leb{1}$-negligible set of times), we can't say in general that
$$
\int \gamma_\sharp|D^s(f\circ\gamma)|\,d\ppi(\gamma)
$$
is singular w.r.t. $\mm$. See \cite[Example~7.4]{AmDi} for an explicit example of a function $f\in BV(X,d,\mm)$ (actually
a characteristic function) with $|Df|\leq C\mm$, but $|D(f\circ\gamma)|$ singular w.r.t. $\Leb{1}$ for any curve $\gamma$.
}\end{remark}

\section{Tensorization of metric measure spaces}\label{sec:tenso}

In this section, as in Section~6 of \cite{AGS11b}, we consider two complete and separable m.m.s. $(X,d_X,\mm_X)$, $(Y,d_Y,\mm_Y)$ and their product
$Z=X\times Y$, endowed with the product distance $d$ satisfying $d^2=d_X^2+d_Y^2$ and the product measure $\mm$. We assume $\mm_X$ and
$\mm_Y$ to be finite. We denote by
$\C$ the Cheeger energy in $(Z,d,\mm)$ and use the notation $f^x(y)=f(x,y)$, $f^y(x)=f(x,y)$.

In Section 6 of \cite{AGS11b}, it is proved, under curvature assumptions on the factors, that $\C$ is quadratic whenever
$\C_X$ and $\C_Y$ are quadratic. In addition, defining
\begin{equation}\label{domainJ}
\Lambda:=\bigl\{f\in L^2(Z,\mm):\ \begin{cases}&\text{$f^x\in H^{1,2}(Y,d_Y,\mm_Y)$ for $\mm_X$-a.e. $x\in X$}\\
&\text{$f^y\in H^{1,2}(X,d_X,\mm_X)$ for $\mm_Y$-a.e. $y\in Y$}\end{cases}\bigr\}, 
\end{equation}
we have the inclusion $H^{1,2}(Z,d,\mm)\subset\Lambda$ and any
$f\in H^{1,2}(Z,d,\mm)$ satisfies
\begin{equation}\label{eq:factor}
|\nabla f|^2_*(x,y)=|\nabla f^y|^2_{*,X}(x)+|\nabla f^x|^2_{*,Y}(y)
\qquad\text{for $\mm$-a.e. $(x,y)\in Z$.}
\end{equation}

Notice that \eqref{eq:factor} provides immediately that $\C$ is a quadratic form whenever $\C_X$ and $\C_Y$
are quadratic forms, since 
$$
\C(f)=\int_Y\C_X(f^y)\,d\mm_Y(y) + \int_X\C_Y(f^x)\,d\mm_X(x).
$$
Notice also that, as illustrated by Proposition~\ref{prop:easy_tensorization} below, 
\eqref{eq:factor} might be true independently of quadraticity assumptions on $\C_X$ and $\C_Y$, just by ``duality''
with the formula 
$d^2=d_X^2+d_Y^2$ defining $d$, but the full validity of this dual formula is presently an open problem (more precisely,
the general validity of $\leq$ in \eqref{eq:factor} is not known, while we will prove that the converse inequality always holds).
 
\begin{proposition}[An easy case of tensorization]\label{prop:easy_tensorization}
Assume that $X$ and $Y$ are Euclidean spaces, with distances $d_X$ and $d_Y$ induced by norms $\|\cdot\|_X$ and
$\|\cdot\|_Y$ respectively. If $\mm_X$ and $\mm_Y$ are the corresponding Lebesgue measures, then \eqref{eq:factor} holds.
\end{proposition}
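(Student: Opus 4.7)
The plan is to reduce everything to the classical Euclidean identity \eqref{eq:gig1}, applied once in each of the three normed spaces $(X,\|\cdot\|_X)$, $(Y,\|\cdot\|_Y)$, and $(Z,\|\cdot\|_Z)$, where the $Z$-norm satisfies $\|(v,w)\|_Z^2=\|v\|_X^2+\|w\|_Y^2$. The whole proof is essentially a linear-algebraic calculation with dual norms, once the Sobolev spaces are identified with their classical counterparts.

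First, I would upgrade \eqref{eq:gig1} to its pointwise form. On a finite-dimensional normed space endowed with Lebesgue measure, the Cheeger energy coincides with the classical $W^{1,2}$-energy, and the minimal relaxed slope coincides $\mathscr L^d$-a.e. with $\|\nabla f\|_*$, where $\nabla f$ is the weak (distributional) Euclidean gradient and $\|\cdot\|_*$ is the dual norm. This identifies $H^{1,2}(X,d_X,\mathscr L^{d_X})=W^{1,2}(X)$ (and likewise for $Y$ and $Z$) with the usual weak gradient, so that $|\nabla g|_{*,X}=\|\nabla g\|_{X,*}$ a.e., and analogously in $Y$ and $Z$.

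Next I would compute the dual of $\|\cdot\|_Z$. Writing a covector as $\xi=(\xi_X,\xi_Y)$, Cauchy--Schwarz in $\R^2$ gives
\begin{equation*}
|\xi_X(v_X)+\xi_Y(v_Y)|\le\sqrt{\|\xi_X\|_{X,*}^2+\|\xi_Y\|_{Y,*}^2}\cdot\sqrt{\|v_X\|_X^2+\|v_Y\|_Y^2},
\end{equation*}
with equality attained by a suitable choice of $(v_X,v_Y)$; hence $\|\xi\|_{Z,*}^2=\|\xi_X\|_{X,*}^2+\|\xi_Y\|_{Y,*}^2$.

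Now take any $f\in H^{1,2}(Z,d,\mm)$; by the previous identification $f$ belongs to $W^{1,2}(\R^{d_X+d_Y})$, and its weak gradient splits as $\nabla f(x,y)=(\nabla_x f(x,y),\nabla_y f(x,y))$. By Fubini together with the classical characterization of Sobolev functions via slicing, for $\mathscr L^{d_Y}$-a.e.\ $y$ the slice $f^y$ lies in $W^{1,2}(X)$ with weak gradient $\nabla_x f(\cdot,y)$, and symmetrically for $f^x$. Combining the pointwise identity $|\nabla f|_*(x,y)=\|\nabla f(x,y)\|_{Z,*}$ with the dual-norm splitting from the previous step and with the corresponding identities $|\nabla f^y|_{*,X}(x)=\|\nabla_x f(x,y)\|_{X,*}$, $|\nabla f^x|_{*,Y}(y)=\|\nabla_y f(x,y)\|_{Y,*}$ yields \eqref{eq:factor} $\mm$-a.e. (If $f\notin H^{1,2}(Z,d,\mm)$, both sides are interpreted as $+\infty$ on a non-negligible set of slices, and the same slicing argument handles that case.)

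The only nontrivial ingredient is the identification of the metric-measure Cheeger structure on a normed Euclidean space with the classical $W^{1,2}$ space and its weak gradient; this is the content of \eqref{eq:gig1} in its pointwise form, and is where the proof really rests. Once that is in place, the remainder is the algebraic fact that the dual of an $\ell_2$-sum of norms is the $\ell_2$-sum of the dual norms, which matches perfectly with the relation $d^2=d_X^2+d_Y^2$ defining the product metric.
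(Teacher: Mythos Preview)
Your proof is correct and follows essentially the same route as the paper: reduce to the classical identity \eqref{eq:gig1} in each of the three normed spaces, and then prove the purely algebraic fact that the dual of the $\ell_2$-sum of $\|\cdot\|_X$ and $\|\cdot\|_Y$ is the $\ell_2$-sum of the dual norms. The paper's argument is terser---it simply says ``according to \eqref{eq:gig1}, we have to prove'' the dual norm identity and then carries out that computation---whereas you make explicit the pointwise upgrade of \eqref{eq:gig1} and the slicing argument needed to identify $|\nabla f^y|_{*,X}$ and $|\nabla f^x|_{*,Y}$ with the partial weak gradients; these are exactly the steps the paper leaves implicit.
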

\begin{proof} The norm $\|\cdot\|_Z$ in $Z=X\times Y$ corresponding to the distance $d_Z=\sqrt{d_X^2+d_Y^2}$ obviously 
satisfies $\|(x,y)\|^2_Z=\|x\|_X^2+\|y\|_Y^2$. According to \eqref{eq:gig1}, we have to prove that 
$$
\|(x^*,y^*)\|_{*,Z}^2=\|x^*\|_{*,X}^2+\|y^*\|_{*,Y}^2\qquad\forall x^*\in X^*,\,\, y^*\in Y^*,
$$
where $\|\cdot\|_{*,Z}$ denotes the dual norm of $\|\cdot\|_Z$ and $\|\cdot\|_{*,X}$ and $\|\cdot\|_{*,Y}$ denote
the dual norms in $X$ and $Y$ respectively. We can estimate
\begin{eqnarray*}
\langle (x^*,y^*),(x,y)\rangle&=&\langle x^*,x\rangle+\langle y^*,y\rangle\\&\leq& \|x^*\|_{*,X}\|x\|_X+\|y^*\|_{*,Y}\|y\|_Y\\&\leq&
\sqrt{\|x^*\|_{*,X}^2+\|y^*\|^2_{*,Y}}\|(x,y)\|_Z.
\end{eqnarray*}
This proves that $\|(x^*,y^*)\|^2_{*,Z}\leq\|x^*\|_{*,X}^2+\|y^*\|_{*,Y}^2$.
On the other hand, if we choose nonzero vectors $x$, $y$ such that $\langle x^*,x\rangle=\|x^*\|_{*,X}\|x\|_X$ and
$\langle y^*,y\rangle=\|y^*\|_{*,Y}\|y\|_X$ we obtain that the first inequality is an equality. Then, replacing $x$ by $ax$
and $y$ by $by$ with $a,\,b\geq 0$ in such a way that
$$
\|x\|_X=\|x^*\|_{*,X}\quad\text{and}\quad\|y\|_Y=\|y^*\|_{*,Y}
$$  
we retain the first two equalities and we obtain that also the last inequality is an equality.
\end{proof}

In Section 6 of \cite{AGS11b}, through a detailed analysis of the Hopf-Lax formula, 
inequality $\geq$ in \eqref{eq:factor} is proved independently of curvature assumptions, while curvature enters in the proof of the
converse one (see \cite[Lemma~6.15]{AGS11b}).

In this section we will revise carefully the arguments of \cite{AGS11b}, obtaining some refinements independently of curvature
assumptions.

We denote by $|\nabla f|_c:{\rm Lip}_{\rm loc}(Z)\to [0,\infty]$ the convex pseudo gradient
\begin{equation}\label{eq:cartesian_slope}
|\nabla f|_c(x,y):=\sqrt{|\nabla f^y|^2(x)+|\nabla f^x|^2(y)},
\end{equation}
corresponding to a ``cartesian'' slope. Notice that 
\begin{equation}\label{eq:bslope}
|\nabla f|(x,y)\geq\max\{|\nabla f^x|(y),|\nabla f^y|(x)\}\geq\frac 12 |\nabla f|_c(x,y).
\end{equation}

We denote by $\C_c$ the Cheeger energy arising from the $L^2$ relaxation of $\int |\nabla f|_c^2\,d\mm$ restricted to locally
Lipschitz functions, namely
$$
\C_c(f):=\inf\left\{\liminf_{n\to\infty}\int_Z|\nabla f_n|_c^2\,d\mm:\ f_n\in {\rm Lip}_{{\rm loc}}(Z),\,\,\int_Z|f_n-f|^2\,d\mm\rightarrow 0\right\}. 
$$
As we discussed in Section~\ref{sec:Chee}, we have a minimal relaxed gradient associated to any $f\in D(\C_c)$, that we shall denote by
$|\nabla f|_{*,c}$. It is also clear from \eqref{eq:bslope} that $\C_c\leq 2\C$. Even though it seems difficult to establish a pointwise converse inequality
$$|\nabla f|(x,y)\leq C\max\{|\nabla f^x|(y),|\nabla f^y|(x)\}\leq C|\nabla f|_c$$
for some constant $C$, we are able to refine a bit the scheme of \cite{AGS11b} to prove
the following result, independently of doubling, quadraticity and Poincar\'e assumptions. 
\begin{theorem} \label{thm:I=C} $\C_c=\C$ on $L^2(Z,\mm)$ and
\begin{equation}\label{eq:factor2}
|\nabla f|_*=|\nabla f|_{*,c}\quad\text{$\mm$-a.e. in $Z$, for all $f\in H^{1,2}(Z,d,\mm)$.}
\end{equation}
\end{theorem}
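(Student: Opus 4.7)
The plan is to prove the two functional inequalities $\C\leq\C_c$ and $\C_c\leq\C$ on $L^2(Z,\mm)$ separately, and then deduce the pointwise identity~\eqref{eq:factor2}.

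\medskip
\noindent\textbf{Step 1: $\C\leq\C_c$.} The key is to show that for every locally Lipschitz $f:Z\to\R$, the cartesian slope $|\nabla f|_c$ is an upper gradient along every $\infty$-test plan: for such a $\ppi$ and $\ppi$-a.e.\ curve $\gamma=(\gamma_X,\gamma_Y)$,
\[
|f(\gamma(1))-f(\gamma(0))|\leq\int_0^1|\nabla f|_c(\gamma(t))|\dot\gamma(t)|\,dt.
\]
I would verify this by a telescoping/partition argument in $(x,y)$, splitting each increment of $f\circ\gamma$ into an $x$-step at fixed $y$ and a $y$-step at fixed $x$, using that $|\nabla f^y|$ is an upper gradient of $f^y$ on $X$ (and analogously in the other variable), and taking the limit of a Riemann refinement (Lipschitzianity of $f$ and uniform boundedness of the sliced slopes by $\mathrm{Lip}(f)$ justify dominated convergence). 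Pointwise Cauchy--Schwarz, combined with $|\dot\gamma|^2=|\dot\gamma_X|^2+|\dot\gamma_Y|^2$ $\Leb{1}$-a.e., then brings in $|\nabla f|_c$. The identification of the minimal relaxed slope with the minimal weak upper gradient from \cite{AGS11a} yields $|\nabla f|_*\leq|\nabla f|_c$ $\mm$-a.e., so $\C(f)\leq\int|\nabla f|_c^2\,d\mm$ for locally Lipschitz $f$. $L^2$-lower semicontinuity of $\C$ (Theorem~\ref{thm:mainchee}(iii)) extends this to $\C\leq\C_c$ on $L^2(Z,\mm)$.

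\medskip
\noindent\textbf{Step 2: $\C_c\leq\C$.} This is the substantive step, handled by adapting the curvature-free Hopf--Lax analysis of \cite[Section~6]{AGS11b}. It suffices to show $\C_c(f)\leq\int|\nabla f|^2\,d\mm$ for every locally Lipschitz $f$; $L^2$-lower semicontinuity of $\C_c$ then yields the functional inequality on $L^2(Z,\mm)$. The natural regularization is the two-parameter Hopf--Lax
\[
Q_{t,s}f(x,y):=\inf_{(x',y')\in Z}\Bigl[f(x',y')+\tfrac{d_X(x,x')^2}{2t}+\tfrac{d_Y(y,y')^2}{2s}\Bigr],
\]
which factors as $Q_t^X\circ Q_s^Y$ (in either order), is locally Lipschitz, and converges to $f$ in $L^2$ as $t,s\downarrow 0$. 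Applying the sharp AGS11b slope estimates to the two one-parameter Hopf--Lax flows separately gives pointwise control of $|\nabla(Q_{t,s}f)^y|(x)$ and $|\nabla(Q_{t,s}f)^x|(y)$ by cost-generators whose squares are bounded in $L^1$ by $\int|\nabla f|^2\,d\mm$; squaring, adding, integrating and letting $t,s\downarrow 0$ delivers $\limsup\int|\nabla Q_{t,s}f|_c^2\,d\mm\leq\int|\nabla f|^2\,d\mm$.

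\medskip
\noindent\textbf{Pointwise identity \eqref{eq:factor2} and main obstacle.} For $f\in H^{1,2}(Z,d,\mm)$, take locally Lipschitz $f_n\to f$ in $L^2(Z,\mm)$ with $|\nabla f_n|_c\to|\nabla f|_{*,c}$ strongly in $L^2$ (Theorem~\ref{thm:mainchee}(ii) applied to $\C_c$). Step 1 yields $|\nabla f_n|_*\leq|\nabla f_n|_c$ $\mm$-a.e., so $(|\nabla f_n|_*)$ is $L^2$-bounded; any weak-$L^2$ cluster point $g$ is a $\C$-relaxed slope of $f$ dominated by $|\nabla f|_{*,c}$ a.e. Minimality combined with $\int|\nabla f|_*^2\,d\mm=\int|\nabla f|_{*,c}^2\,d\mm$ (from $\C=\C_c$) then forces $|\nabla f|_*=|\nabla f|_{*,c}$ $\mm$-a.e. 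The genuine difficulty is Step 2: pointwise $|\nabla g|_c$ may strictly exceed $|\nabla g|$, so no slope comparison between Lipschitz functions helps; one must honestly construct a good approximating sequence, and Hopf--Lax in each coordinate is the only tool available without curvature, doubling or Poincar\'e. The technical refinement of \cite{AGS11b} lies in the bookkeeping of the two coordinate flows in the $\limsup$ estimate of the cartesian slope.
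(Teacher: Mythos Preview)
Your Step~1 and the deduction of the pointwise identity from $\C=\C_c$ are fine and match the paper's treatment (the paper cites \cite[Lemma~6.14]{AGS11b} for the upper gradient property of $|\nabla f|_c$ and then invokes closure of weak upper gradients, exactly as you do).

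Step~2, however, has a genuine gap. You propose to regularize with the two-parameter semigroup $Q_{t,s}=Q_t^X\circ Q_s^Y$ and then assert that ``the sharp AGS11b slope estimates applied to the two one-parameter flows separately'' produce cost-generators whose squared $L^1$-norm is bounded by $\int_Z|\nabla f|^2\,d\mm$. No such estimate exists in \cite{AGS11b}: the pointwise bounds $|\nabla(Q_t^X g)|\leq D_g^+(\cdot,t)/t$ give no a~priori control of $\int (D^+/t)^2\,d\mm$ in terms of $\int|\nabla g|^2\,d\mm$; even for a single factor the only general bound is via the global Lipschitz constant of $g$, not its slope. Treating the two coordinates independently loses precisely the coupling that makes the identity $\C_c=\C$ true.

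The paper's proof of $\C_c\leq\C$ (in the appendix) is structurally quite different from a direct regularization. It runs the \emph{gradient flow of $\C_c$} starting from $h=f^2$ (with $f$ normalized), and combines three ingredients: (i) a Hamilton--Jacobi subsolution $\tfrac{d^+}{dt}Q_tg+\tfrac12|\nabla Q_tg|_c^2\leq0$ for the \emph{diagonal} Hopf--Lax $Q_t$ in the product, obtained from the key inequality $[D_g^+((x,y),t)]^2\geq [D^-_{L_{t,y}}(x,t)]^2+[D^-_{R_{t,x}}(y,t)]^2$; (ii) a Kuwada-type lemma using (i) to bound the Wasserstein speed $|\dot\mu_t|^2$ of $\mu_t=h_t\mm$ by the Fisher-type quantity $\int |\nabla h_t|_{*,c}^2/h_t\,d\mm$; (iii) the entropy dissipation identity along the $\C_c$-flow together with the weak upper gradient property of $|\nabla f|_*$ to close the loop and obtain $\limsup_i\C_c(\sqrt{h_{s_i}})\leq\C(f)$ along a subsequence $s_i\downarrow0$. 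The Hopf--Lax semigroup enters only through (i)--(ii), as a tool to estimate Wasserstein derivatives, not as a mollifier of $f$ itself. Your sketch misses this indirect mechanism entirely; to repair Step~2 you would need to reproduce the Kuwada/entropy argument rather than attempt a direct $Q_{t,s}$ approximation.
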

\begin{proof} We provide the proof of the inequality $\C_c\leq\C$ in the appendix, since it involves tools (Hopf-Lax formula, gradient flows)
not directly connected to the rest of the paper. Let us prove the inequality $\C\leq\C_c$ and the corresponding inequality
$\leq$ in \eqref{eq:factor2}.  By Lemma~6.14 in \cite{AGS11b} (in turn based on the calculus Lemma~4.3.4 of \cite{AGS05}), 
$|\nabla f|_c$ is an upper gradient for locally Lipschitz functions, namely
\begin{equation}\label{eq:slopecart}
|\frac{d}{dt} (f\circ\gamma)|\leq |\nabla f|_c\circ\gamma|\dot\gamma|\quad\text{$\Leb{1}$-a.e. in $(0,1)$}
\end{equation}
for all $\gamma\in AC([0,1];Z)$ and $f\in {\rm Lip}_{\rm loc}(Z)$. Recall now that for any $f\in D(\C_c)$ we can find locally
Lipschitz functions
$f_n$ with $f_n\to f$ in $L^2(Z,\mm)$ and $|\nabla f_n|_c\to |\nabla f|_{*,c}$ in $L^2(Z,\mm)$.
Then, the standard argument for the closure of weak upper gradients (even under weak $L^2(Z,\mm)$ convergence, see
\cite[Lemma~4.11]{Sh}) provides the inclusion $D(\C)\supset D(\C_c)$ and the inequality
\begin{equation}\label{eq:cdc1}
|\nabla f|_*\leq |\nabla f|_{*,c}\qquad\text{$\mm$-a.e. in $Z$.} 
\end{equation}
\end{proof}

It will be useful to compare $\C_c$ with the convex functional (actually a quadratic form, when 
$\C_X$ and $\C_Y$ are quadratic)
\begin{equation}\label{eq:def_J}
J(f):=\int_Z |\nabla f^y|_{*,X}^2(x)+|\nabla f^x|_{*,Y}^2(y)\,d\mm(x,y).
\end{equation}
The functional $J$ is well defined on the set $\Lambda$ in \eqref{domainJ}, and set equal to $\infty$ on $L^2(Z,\mm)\setminus\Lambda$.

\begin{lemma}\label{lem2}
If $f\in L^2(Z,\mm)$ is representable as $f_Xf_Y$, with $f_X\in H^{1,2}(X,d_X,\mm_X)$ and $f_Y\in H^{1,2}(Y,d_Y,\mm_Y)$, then $\C_c(f)=J(f)$.
Furthermore, if $-\xi_X\in\partial\C_X(f_X)$ and $-\xi_Y\in\partial\C_Y(f_Y)$, then
\begin{equation}\label{eq:partial_lap2}
-(f_Y\xi_X+f_X\xi_Y)\in\partial J\bigl(f_Xf_Y).
\end{equation} 
\end{lemma}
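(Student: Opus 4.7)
The statement splits into the identity $\C_c(f_Xf_Y)=J(f_Xf_Y)$ and the subdifferential inclusion \eqref{eq:partial_lap2}. The starting point is the explicit formula
$$
J(f_Xf_Y)=\|f_Y\|^2_{L^2(\mm_Y)}\,\C_X(f_X)+\|f_X\|^2_{L^2(\mm_X)}\,\C_Y(f_Y),
$$
obtained from the definition of $J$ via Fubini and the $1$-homogeneity of minimal relaxed slopes (so $|\nabla f^y|_{*,X}^2(x)=f_Y(y)^2|\nabla f_X|_{*,X}^2(x)$ and symmetrically); this makes $J(f_Xf_Y)$ finite.

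For the upper bound $\C_c(f_Xf_Y)\le J(f_Xf_Y)$, I would first reduce to bounded factors by truncation with $T_M(t)=(t\wedge M)\vee(-M)$: locality and the chain rule force $|\nabla T_M(f_X)|_{*,X}\uparrow|\nabla f_X|_{*,X}$ $\mm_X$-a.e., so $\C_X(T_M(f_X))\to\C_X(f_X)$ by monotone convergence (and analogously for $Y$), while $T_M(f_X)T_M(f_Y)\to f_Xf_Y$ in $L^2(\mm)$ by dominated convergence; the $L^2$-lower semicontinuity of $\C_c$ then reduces matters to $f_X,f_Y\in L^\infty$. In the bounded case, apply Theorem~\ref{thm:mainchee}(ii) to find locally Lipschitz $g_n\to f_X$ in $L^2(\mm_X)$ with $|\nabla g_n|\to|\nabla f_X|_{*,X}$ in $L^2(\mm_X)$, and analogously $h_n\to f_Y$; a further truncation makes these sequences uniformly bounded. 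Then $g_nh_n\to f_Xf_Y$ in $L^2(\mm)$, and the exact identity
$$
|\nabla(g_nh_n)|_c^2(x,y)=h_n(y)^2|\nabla g_n|^2(x)+g_n(x)^2|\nabla h_n|^2(y)
$$
together with Fubini gives $\int_Z|\nabla(g_nh_n)|_c^2\,d\mm\to J(f_Xf_Y)$.

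For the lower bound $\C_c(f_Xf_Y)\ge J(f_Xf_Y)$, take a recovering sequence $(f_n)\subset{\rm Lip}_{\rm loc}(Z)$ with $f_n\to f_Xf_Y$ in $L^2(\mm)$ and $\int|\nabla f_n|_c^2\,d\mm\to\C_c(f_Xf_Y)$, and split $\int|\nabla f_n|_c^2\,d\mm=A_n+B_n$ according to the two summands of $|\nabla\cdot|_c^2$. Pass to a subsequence so that $A_n\to\alpha$, $B_n\to\beta$ with $\alpha+\beta=\C_c(f_Xf_Y)$, and to a further subsequence so that $f_n(\cdot,y)\to f_Y(y)f_X$ in $L^2(\mm_X)$ for $\mm_Y$-a.e.\ $y$. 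Fatou applied to $A_n$ combined with the $L^2(\mm_X)$-lower semicontinuity of $\C_X$ at $f_Y(y)f_X$ then yields
$$
\alpha\ge\int_Y f_Y(y)^2\,\C_X(f_X)\,d\mm_Y(y)=\|f_Y\|^2\,\C_X(f_X);
$$
the symmetric extraction in $x$ yields $\beta\ge\|f_X\|^2\,\C_Y(f_Y)$, and summing produces the desired inequality.

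For the subdifferential inclusion, the $2$-homogeneity $\C_X(\lambda u)=\lambda^2\C_X(u)$ upgrades $-\xi_X\in\partial\C_X(f_X)$ to $-\lambda\xi_X\in\partial\C_X(\lambda f_X)$ for every $\lambda\in\R$ by a one-line rescaling check ($v=\lambda w$ in the defining inequality). For $g\in L^2(\mm)$, apply this slicewise with $\lambda=f_Y(y)$ and integrate in $y$ (the case $g\notin\Lambda$ being trivial as $J(g)=\infty$), then do the symmetric operation in $x$; adding the two and using the formula for $J(f_Xf_Y)$ gives
$$
J(g)\ge J(f_Xf_Y)-\int_Z(f_Y\xi_X+f_X\xi_Y)(g-f_Xf_Y)\,d\mm,
$$
which is \eqref{eq:partial_lap2} (note $f_Y\xi_X+f_X\xi_Y\in L^2(\mm)$ by Fubini). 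The main obstacle is the subsequence extraction in the lower bound, where one must interleave the extractions ensuring convergence of $A_n$ and $B_n$ with those producing slicewise $L^2(\mm_X)$- and $L^2(\mm_Y)$-convergence of $f_n$, so that the lower semicontinuity of $\C_X$ and $\C_Y$ may be applied inside Fatou's lemma.
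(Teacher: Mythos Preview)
Your proof is correct and follows essentially the same route as the paper. The upper bound $\C_c\leq J$ is handled identically (truncation to bounded factors, then optimal Lipschitz approximants on each factor and the exact formula for $|\nabla(g_nh_n)|_c^2$), and your detailed verification of \eqref{eq:partial_lap2} via $2$-homogeneity and slicewise integration is precisely what the paper dismisses as ``straightforward''. The only cosmetic difference concerns the lower bound: the paper obtains $\C_c\geq J$ by invoking Lemma~\ref{lem1}, which establishes the stronger pointwise inequality $|\nabla f^x|_{*,Y}^2(y)+|\nabla f^y|_{*,X}^2(x)\leq |\nabla f|_{*,c}^2(x,y)$ for \emph{all} $f\in D(\C_c)$; your Fatou-plus-slicewise-lower-semicontinuity argument is exactly the integrated version of that lemma's proof, specialized to tensor products. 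The ``interleaving'' you flag as the main obstacle is not a genuine difficulty: one simply extracts successive nested subsequences (first so that $A_n,B_n$ converge, then for $\mm_Y$-a.e.\ slicewise $L^2(\mm_X)$ convergence, then for $\mm_X$-a.e.\ slicewise $L^2(\mm_Y)$ convergence), and all limits are preserved along the final subsequence.
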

\begin{proof} The inequality $\C_c(f)\geq J(f)$, even for all functions $f$, is guaranteed by the previous lemma.
In order to prove the converse inequality $\C_c(f)\leq J(f)$ on tensor products $f$, 
by a simple truncation argument we can assume with no loss of generality 
that $f_X$ and $f_Y$ are bounded functions.
Let $f^n_X\in{\rm Lip}_{\rm loc}(X)$ with $|\nabla f^n_X|\to |\nabla f_X|_{*,X}$ in $L^2(X,\mm_X)$; analogously, let
$f^n_Y\in{\rm Lip}_{\rm loc}(Y)$ with $|\nabla f^n_Y|\to |\nabla f_Y|_{*,Y}$ in $L^2(Y,\mm_Y)$. Since $f_X$ and $f_Y$
are bounded, we can also assume that $(f^n_X)$ and $(f^n_Y)$ are uniformly bounded. Since $f^n_Xf^n_Y\in {\rm Lip}_{\rm loc}(Z)$ and
$$
|\nabla (f^n_Xf^n_Y)|_c^2=(f^n_X)^2|\nabla f_Y^n|^2+(f^n_Y)^2|\nabla f_X^n|^2
$$
we can easily conclude.

The verification of \eqref{eq:partial_lap2} is straightforward.
\end{proof}

Now we can state the tensorization of doubling metric measure spaces satisfying a $(1,2)$-Poincar\'e inequality for locally
Lipschitz functions. Note that, by a localization argument, the following theorem is also valid for measures finite on bounded sets.

\begin{theorem} \label{thm:PI_tensor} Assume that $(X,d_X,\mm_X)$ and $(Y,d_Y,\mm_Y)$ are doubling and satisfy a 
$(1,2)$-Poincar\'e inequality for locally Lipschitz functions. Then the tensorization property \eqref{eq:factor} of weak gradients
holds for all functions $f\in D(\C)$. In addition, $\C$ concides with the functional $J$ defined in \eqref{eq:def_J}.
\end{theorem}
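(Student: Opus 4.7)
By Theorem~\ref{thm:I=C} we already know $\C=\C_c$ on $L^2(Z,\mm)$ and $|\nabla f|_*=|\nabla f|_{*,c}$ $\mm$-a.e., so the theorem reduces to proving $\C_c=J$ on $L^2(Z,\mm)$ together with the pointwise factorization of $|\nabla f|_{*,c}^2$. The key preliminary is that for any $f\in\mathrm{Lip}_{\mathrm{loc}}(Z)$ the slices $f^y$ and $f^x$ are locally Lipschitz on the factors; applying Theorem~\ref{thm:cheeger} on $X$ and on $Y$ (thanks to doubling and $(1,2)$-Poincar\'e on each factor) identifies the slice slopes with the slice minimal relaxed slopes $\mm$-a.e., so $\int_Z|\nabla f|_c^2\,d\mm=J(f)$ on $\mathrm{Lip}_{\mathrm{loc}}(Z)$.

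The inequality $J\le\C_c$ then follows by Fubini and slice-wise lower semicontinuity. Given $f\in D(\C_c)$, take a recovery sequence $f_n\in\mathrm{Lip}_{\mathrm{loc}}(Z)$ with $|\nabla f_n|_c\to|\nabla f|_{*,c}$ strongly in $L^2(Z,\mm)$; extract a subsequence so that $f_n^y\to f^y$ in $L^2(X,\mm_X)$ for $\mm_Y$-a.e.\ $y$. The $L^2$-lower semicontinuity of $\C_X$ combined with the slice identification above yields
$$
\int_X|\nabla f^y|_{*,X}^2\,d\mm_X\le\liminf_n\int_X|\nabla f_n^y|^2\,d\mm_X\qquad\text{for $\mm_Y$-a.e.\ $y$,}
$$
so Fatou and the symmetric $X$-integration estimate give $J(f)\le\liminf_n J(f_n)=\C_c(f)$.

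The main step is the converse inequality $\C_c\le J$. For bounded tensor products $f=f_Xf_Y$ it is exactly Lemma~\ref{lem2}. For a general $f\in D(J)$ my plan is to use the curve characterization of $H^{1,2}$ developed in \cite{AGS11a}, showing that $g:=\sqrt{|\nabla f^y|_{*,X}^2+|\nabla f^x|_{*,Y}^2}\in L^2(Z,\mm)$ is a weak upper gradient of $f$ along every $2$-test plan on $Z$. Given such a plan $\ppi$ and writing $\gamma=(\gamma_X,\gamma_Y)$, the push-forwards $(\mathrm{proj}_X)_\sharp\ppi$ and $(\mathrm{proj}_Y)_\sharp\ppi$ are $2$-test plans on the respective factors. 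A refining-partition telescoping of $f(\gamma(t_{i+1}))-f(\gamma(t_i))$ into an $X$-increment at frozen $y=\gamma_Y(t_{i+1})$ and a $Y$-increment at frozen $x=\gamma_X(t_i)$ reduces the joint absolute continuity estimate to the slice-wise upper gradient bounds granted by the hypothesis $f\in D(J)$. The $\mm$-null set of bad slices is visited by $\ppi$-a.e.\ curve on a $\Leb{1}$-null set of parameters, thanks to the bounded compression $(e_t)_\sharp\ppi\le C(\ppi)\mm$.

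Finally, the pointwise factorization \eqref{eq:factor} is extracted from the integral equality $\C_c=J$ by sharpening Step~2. The equality $\C_c(f)=J(f)$ forces the slice-wise Fatou inequalities to be equalities $\mm_Y$-a.e., so $\||\nabla f_n^y|\|_{L^2(X,\mm_X)}\to\||\nabla f^y|_{*,X}\|_{L^2(X,\mm_X)}$ for $\mm_Y$-a.e.\ $y$. Combined with weak $L^2$-compactness, Theorem~\ref{thm:mainchee}(ii) upgrades this to strong $L^2(X,\mm_X)$-convergence $|\nabla f_n^y|\to|\nabla f^y|_{*,X}$ on a.e.\ $y$-slice, and analogously on $x$-slices. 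For every Borel $A\subset Z$, strong $L^2$-convergence of $|\nabla f_n|_c$ gives $\int_A|\nabla f_n|_c^2\,d\mm\to\int_A|\nabla f|_{*,c}^2\,d\mm$, while the slice-wise strong convergences yield $\int_A|\nabla f_n|_c^2\,d\mm\to\int_A(|\nabla f^y|_{*,X}^2+|\nabla f^x|_{*,Y}^2)\,d\mm$, forcing \eqref{eq:factor} $\mm$-a.e. The main obstacle is the curve-splitting argument in the converse step: transferring slice-wise absolute continuity of $f$ into joint absolute continuity along $\ppi$-a.e.\ curve in $Z$, while handling the exceptional $\mm$-null set of bad slices uniformly in $\gamma$.
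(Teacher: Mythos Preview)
Your proposal has a genuine gap in the converse step $\C_c\le J$, and you yourself flag it at the end without resolving it. The telescoping argument does not go through as stated. Writing the $X$-increment at frozen $y=\gamma_Y(t_{i+1})$ requires two things simultaneously: that $f^{\gamma_Y(t_{i+1})}\in H^{1,2}(X,d_X,\mm_X)$, and that $\gamma_X|_{[t_i,t_{i+1}]}$ is a good curve for \emph{that specific} slice function. Bounded compression only tells you that for $\ppi$-a.e.\ $\gamma$ and $\Leb{1}$-a.e.\ $t$ the point $\gamma_Y(t)$ lies in the good set of $y$'s; it says nothing about the conditional law of $\gamma_X$ given $\gamma_Y(t)=y$. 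That conditional need not be a $2$-test plan on $X$, so you cannot conclude that $\gamma_X$ avoids the ($y$-dependent) negligible set of bad curves for $f^y$. The projected plans $(\mathrm{proj}_X)_\sharp\ppi$, $(\mathrm{proj}_Y)_\sharp\ppi$ control the marginals, not this coupling. This is precisely the obstruction that makes the curve-wise route hard and is why the paper takes an entirely different path.

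The paper's argument for $D(J)\subset H^{1,2}(Z,d,\mm)$ exploits the doubling and Poincar\'e hypotheses in a structural way you have not used: it invokes Cheeger's theory (or the $\Gamma$-convergence construction of \cite{AmCoDi}) to produce lower semicontinuous \emph{quadratic} forms ${\cal E}_X$, ${\cal E}_Y$ equivalent to $\C_X$, $\C_Y$; then Rellich--Kondrachov compactness (available on balls in geodesic PI spaces) allows the spectral theorem to yield eigenbases $(f_n)$, $(g_m)$ of the associated operators. The tensor products $f_ng_m$ span a subspace dense in $D(J)$ for the Hilbertian norm $J_1$ built from ${\cal E}_{X,1}$, ${\cal E}_{Y,1}$ (because the $J_1$-semigroup factors and preserves this span). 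On tensor products Lemma~\ref{lem2} gives $\C=J$, and orthogonality in $J_1$ lets one pass from finite sums to the closure. None of this structure is visible from the test-plan side.

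A secondary remark: your Step~5 is much more work than needed. Lemma~\ref{lem1} already gives the \emph{pointwise} inequality $|\nabla f^y|_{*,X}^2(x)+|\nabla f^x|_{*,Y}^2(y)\le |\nabla f|_{*,c}^2(x,y)$ $\mm$-a.e.\ (your Step~2 only records the integrated version). Once you have this and the integral identity $\C_c(f)=J(f)$, the pointwise factorization \eqref{eq:factor} is immediate; there is no need for the slice-wise strong convergence machinery.
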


Our third result on the tensorization provides the quadratic property of $\C$ independently of curvature assumptions, but assuming the
strong asymptotically Hilbertian property on the factors. As we discussed in Section~\ref{sec:Chee}, this assumption is equivalent
to asymptotic Hilbertianity in the presence of doubling and $(1,2)$-Poincar\'e inequality for locally Lipschitz functions, which are
already covered by Theorem~\ref{thm:PI_tensor}. Again, by a localization argument, the following theorem also holds for measures
finite on bounded sets.

\begin{theorem} \label{thm:Cquad} If $\C_X,\,\C_Y$ are quadratic, then \eqref{eq:factor} holds iff $\C$ is quadratic. 
In particular, if $(X,d_X,\mm_X)$ and $(Y,d_Y,\mm_Y)$ are
strongly asymptotically Hilbertian, then $\C$ is a quadratic form and \eqref{eq:factor} holds.
\end{theorem}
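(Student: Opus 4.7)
My plan is to decompose the proof into three parts: the easy direction ``\eqref{eq:factor}$\,\Rightarrow\,$$\C$ quadratic'', the harder reverse implication, and then the ``in particular'' corollary.

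\emph{Easy direction.} If \eqref{eq:factor} holds then $\C$ coincides on $L^2(Z,\mm)$ with the functional $J$ defined in \eqref{eq:def_J}. The slice maps $f\mapsto f^x$ and $f\mapsto f^y$ are linear, the two factor Cheeger energies are quadratic by hypothesis, and Fubini preserves the quadratic structure after integration in the remaining variable; hence $J$, and therefore $\C$, is quadratic.

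\emph{Hard direction.} Assume $\C$ is quadratic. Theorem~\ref{thm:I=C} gives $\C=\C_c$, and Lemma~\ref{lem2} yields $\C(f_Xf_Y)=J(f_Xf_Y)$ on every tensor product with $f_X\in H^{1,2}(X,d_X,\mm_X)$ and $f_Y\in H^{1,2}(Y,d_Y,\mm_Y)$, together with the subgradient formula
$$
-(f_Y\xi_X+f_X\xi_Y)\in\partial J(f_Xf_Y)\qquad\text{for }-\xi_X\in\partial\C_X(f_X),\ -\xi_Y\in\partial\C_Y(f_Y).
$$
The curvature-free pointwise bound $|\nabla u|_*^2\geq|\nabla u^y|_{*,X}^2+|\nabla u^x|_{*,Y}^2$ recalled in the introduction integrates to $\C\geq J$ on $L^2(Z,\mm)$; combined with the equality $\C=J$ at tensor products, it promotes each such subgradient of $J$ to a subgradient of $\C$. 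Quadraticity of $\C,\C_X,\C_Y$ makes $\partial\C,\partial\C_X,\partial\C_Y$ single-valued and governed by non-negative self-adjoint operators $A,A_X,A_Y$, so the inclusion rewrites as the operator identity
$$
A(f_X\otimes f_Y)=A_Xf_X\otimes f_Y+f_X\otimes A_Yf_Y,\qquad f_X\in D(A_X),\ f_Y\in D(A_Y),
$$
exhibiting $A$ as a self-adjoint extension of $A_X\otimes I+I\otimes A_Y$ on the algebraic tensor product $D(A_X)\odot D(A_Y)$. By the classical essential self-adjointness of tensor sums of non-negative self-adjoint operators on the algebraic tensor product of their domains (see e.g.\ Reed--Simon), $A$ must coincide with the closure of this tensor sum, whose associated quadratic form is precisely $J$. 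Hence $\C=J$ on $L^2(Z,\mm)$, and together with the curvature-free pointwise inequality the equality of integrals forces \eqref{eq:factor} $\mm$-a.e.

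\emph{``In particular'' part and main obstacle.} Strong asymptotic Hilbertianity of the factors means each of $\int_X|\nabla g|^2\,d\mm_X$, $\int_Y|\nabla h|^2\,d\mm_Y$ is quadratic on locally Lipschitz functions, so by Fubini the cartesian energy
$$
\int_Z|\nabla f|_c^2\,d\mm=\int_Y\int_X|\nabla f^y|^2\,d\mm_X\,d\mm_Y+\int_X\int_Y|\nabla f^x|^2\,d\mm_Y\,d\mm_X
$$
is a quadratic form on ${\rm Lip}_{\rm loc}(Z)$. Its $L^2(Z,\mm)$-relaxation is again quadratic (lsc envelopes of quadratic forms satisfy the parallelogram identity, by a short direct verification), so $\C_c$ is quadratic; Theorem~\ref{thm:I=C} identifies $\C=\C_c$, and the iff just proved gives \eqref{eq:factor}. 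The main obstacle throughout the scheme is the operator-theoretic step in the hard direction: it hinges on the classical but non-elementary essential self-adjointness of $A_X\otimes I+I\otimes A_Y$ on $D(A_X)\odot D(A_Y)$, without which one cannot lift the tensor-product identity from single tensors to all of $L^2(Z,\mm)$.
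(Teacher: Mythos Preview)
Your argument is correct, and your treatment of the easy direction and the ``in particular'' part matches the paper exactly. The hard direction, however, follows a genuinely different route. The paper argues at the level of quadratic forms: since $\C\geq J$ (Lemma~\ref{lem1} plus $\C=\C_c$) and $\C=J$ on tensor products (Lemma~\ref{lem2}), the nonnegative quadratic form $Q=\C-J$ vanishes on the span $E$ of tensor products; one then shows $E$ is dense in the graph norm of $J$ by observing that the $J$-semigroup preserves $E$ (via \eqref{eq:partial_lap1}) and invoking a standard density criterion, and concludes $\C\leq J$ by lower semicontinuity. You instead work at the operator level: the subgradient-promotion step (which is correct: $\C\geq J$ with equality at $u$ gives $\partial J(u)\subset\partial\C(u)$) yields that the self-adjoint generator $A$ of $\C$ extends $A_X\otimes I+I\otimes A_Y$ on $D(A_X)\odot D(A_Y)$, and you then invoke the Reed--Simon essential self-adjointness of tensor sums to pin $A$ down uniquely. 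The one step you leave implicit is why the form of this closure is exactly $J$; this is most cleanly seen by noting that Lemma~\ref{lem2} already shows the generator $A_J$ of the (closed, quadratic by hypothesis) form $J$ extends the same tensor sum, so $A_J$ equals the closure as well, whence $A=A_J$ and $\C=J$. Your approach trades the paper's elementary semigroup-density argument for a heavier operator-theoretic black box; the paper's version is more self-contained and stays within the tools developed in the article, while yours is structurally cleaner once the Reed--Simon result is granted.
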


We devote the rest of the section to the proof of Theorem~\ref{thm:PI_tensor} and Theorem~\ref{thm:Cquad}. 

\begin{lemma}\label{lem1}
$J$ is a convex and weakly lower semicontinuous functional on $L^2(Z,\mm)$. In addition,
$\C_c\geq J$ on $L^2(Z,\mm)$ and $|\nabla f^x|_{*,Y}^2(y)+|\nabla f^y|_{*,X}^2(x)\leq |\nabla f|_{*,c}^2(x,y)$ for $\mm$-a.e. $(x,y)\in Z$.
\end{lemma}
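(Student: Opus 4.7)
The plan is to establish the four claims in order---convexity, weak lower semicontinuity, the integrated inequality, and finally the pointwise inequality, with the last being the technical core.

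\emph{Convexity and lower semicontinuity of $J$.} Writing
\[
J(f)=\int_Y\C_X(f^y)\,d\mm_Y(y)+\int_X\C_Y(f^x)\,d\mm_X(x)
\]
(with the convention $\C_X(g)=+\infty$ if $g\notin D(\C_X)$, and similarly for $\C_Y$), convexity is immediate from the convexity of $\C_X,\C_Y$ granted by Theorem~\ref{thm:mainchee}(iii). Since $J$ is convex, Mazur's lemma reduces weak $L^2$ lower semicontinuity to strong $L^2$ lower semicontinuity. For the latter, given $f_n\to f$ in $L^2(Z,\mm)$, Fubini produces a subsequence along which $f_n^y\to f^y$ in $L^2(X,\mm_X)$ for $\mm_Y$-a.e.\ $y$ (and symmetrically on $x$-slices); slice-wise lower semicontinuity of $\C_X,\C_Y$ combined with Fatou then yields $J(f)\le\liminf_n J(f_n)$.

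\emph{Integrated inequality $\C_c\geq J$.} Assuming $\C_c(f)<\infty$, I would take $f_n\in{\rm Lip}_{\rm loc}(Z)$ with $f_n\to f$ in $L^2$ and $\int_Z|\nabla f_n|_c^2\,d\mm\to\C_c(f)$. By Fubini, for $\mm_Y$-a.e.\ $y$ the slice $f_n^y$ is locally Lipschitz with $|\nabla f_n^y|\in L^2(X,\mm_X)$, hence an admissible competitor in the relaxation defining $\C_X$, so that $|\nabla f_n^y|_{*,X}\leq|\nabla f_n^y|$ $\mm_X$-a.e.; the analogous bound holds on $x$-slices. Integrating in the respective variable and summing gives $J(f_n)\le\int_Z|\nabla f_n|_c^2\,d\mm$, and the lower semicontinuity from the previous step delivers $J(f)\le\C_c(f)$.

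\emph{Pointwise inequality (the main obstacle).} By Theorem~\ref{thm:mainchee}(ii) I would choose $f_n\in{\rm Lip}_{\rm loc}(Z)$ with $f_n\to f$ in $L^2$ and $|\nabla f_n|_c\to|\nabla f|_{*,c}$ strongly in $L^2(Z,\mm)$. Set $A_n(x,y):=|\nabla f_n^y|(x)$ and $B_n(x,y):=|\nabla f_n^x|(y)$: then $A_n^2+B_n^2=|\nabla f_n|_c^2\to|\nabla f|_{*,c}^2$ in $L^1$ and $(A_n,B_n)$ is bounded in $L^2(Z,\mm)^2$, so along a subsequence $(A_n,B_n)\rightharpoonup(A,B)$. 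Mazur's lemma in the Hilbert space $L^2(Z,\mm)^2$ then yields convex combinations \emph{with the same coefficients} $A'_k=\sum_{n\geq k}\gamma_{n,k}A_n$ and $B'_k=\sum_{n\geq k}\gamma_{n,k}B_n$ converging strongly to $A$ and $B$. Setting $g_k:=\sum_{n\geq k}\gamma_{n,k}f_n\in{\rm Lip}_{\rm loc}(Z)$ yields $g_k\to f$ in $L^2$, while subadditivity of the slope gives $|\nabla g_k^y|\leq A'_k(\cdot,y)$ and $|\nabla g_k^x|\leq B'_k(x,\cdot)$ pointwise. A further Fubini subsequence ensures $A'_k(\cdot,y)\to A(\cdot,y)$ in $L^2(X,\mm_X)$ and $g_k^y\to f^y$ in $L^2(X,\mm_X)$ for $\mm_Y$-a.e.\ $y$; for such $y$, $|\nabla g_k^y|$ is $L^2(X,\mm_X)$-bounded, and any weak subsequential limit $\alpha_y$ satisfies $\alpha_y\leq A(\cdot,y)$ $\mm_X$-a.e.\ (a pointwise $\leq$ against a strongly convergent envelope passes to weak $L^2$ limits) and is an $X$-relaxed slope of $f^y$ by Theorem~\ref{thm:mainchee}(ii), so $|\nabla f^y|_{*,X}\leq A(\cdot,y)$ $\mm_X$-a.e. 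By Fubini this promotes to $|\nabla f^y|_{*,X}(x)\leq A(x,y)$ for $\mm$-a.e.\ $(x,y)$, and symmetrically $|\nabla f^x|_{*,Y}(y)\leq B(x,y)$. Convexity of $t\mapsto t^2$ finally gives
\[
(A'_k)^2+(B'_k)^2\leq\sum_{n\geq k}\gamma_{n,k}(A_n^2+B_n^2)\longrightarrow|\nabla f|_{*,c}^2\quad\text{in }L^1(Z,\mm),
\]
while strong $L^2$ convergence of $(A'_k,B'_k)$ to $(A,B)$ passes to pointwise convergence along a further subsequence, whence $A^2+B^2\leq|\nabla f|_{*,c}^2$ $\mm$-a.e. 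Combining with the slice-wise bounds concludes the proof. The hard point is precisely this last upgrade: it requires the \emph{same} Mazur coefficients to control both partial slopes simultaneously, so that the slice-wise minimality of the relaxed slope on each factor provided by Theorem~\ref{thm:mainchee}(ii) can actually be applied.
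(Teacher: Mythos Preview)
Your proof is correct and follows the same skeleton the paper indicates (Fubini plus passage to subsequences converging on almost every slice, then slice-wise lower semicontinuity of the factor Cheeger energies). The paper's own proof is extremely terse---it simply asserts that the lower semicontinuity and the pointwise inequality are ``a simple application of Fubini's theorem'' and of the existence of a subsequence converging on almost every $x$-slice and almost every $y$-slice---so your write-up is in fact a substantial elaboration of what the authors only sketch.

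The one place where you genuinely add content is the pointwise \emph{sum-of-squares} bound. The naive slice argument (bounding $|\nabla f_n^y|$ above by $|\nabla f_n|_c(\cdot,y)$ and passing to weak limits on each slice) only yields $|\nabla f^y|_{*,X}\le |\nabla f|_{*,c}$ and the symmetric bound separately, which would lose a factor~$2$ when squared and added. Your Mazur step in $L^2(Z,\mm)^2$, using the \emph{same} convex coefficients on both components so that the resulting $g_k$ controls both partial slopes simultaneously, is exactly what is needed to retain the sharp bound $A^2+B^2\le |\nabla f|_{*,c}^2$; the subsequent a.e.\ passage to the limit via $L^1$ convergence of $\sum_{n\ge k}\gamma_{n,k}|\nabla f_n|_c^2$ is clean. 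This is presumably what the authors had in mind but did not write out, so your version is a genuine improvement in exposition rather than a different method.
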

\begin{proof} Convexity of $J$ is trivial, so it suffices to show lower semicontinuity of $J$ in the strong topology of $L^2(Z,\mm)$.
This and the last part of the statement are a simple application of Fubini's theorem and of the fact that any convergent sequence 
$g_n\to g$ in $L^2(Z,\mm)$ can be refined to obtain a subsequence $(n_k)$ satisfying
$g^x_{n(k)}\to g^x$ in $L^2(Y,\mm_Y)$ for $\mm_X$-a.e. $x\in X$ and 
$g^y_{n(k)}\to g^y$ in $L^2(X,\mm_X)$ for $\mm_Y$-a.e. $y\in Y$.
\end{proof}

\begin{proof}[of Theorem~\ref{thm:PI_tensor}] 
Taking Lemma~\ref{lem1}, \eqref{eq:factor2} and the equality $\C=\C_c$ into account, to prove
the tensorization property \eqref{eq:factor} for all functions $f\in D(\C)$ we need only to show
that $\C_c\leq J$ on $H^{1,2}(Z,d,\mm)$ and that $D(J)\subset H^{1,2}(Z,d,\mm)$. To prove the inequality we need to find, for all $f\in H^{1,2}(Z,d,\mm)$ a
sequence of functions $f_n\in {\rm Lip}_{\rm loc}(Z)\cap L^2(Z,\mm)$ satisfying $f_n\to f$ in $L^2(Z,\mm)$ and $J(f_n)\to J(f)$. Indeed,
since Theorem~\ref{thm:cheeger} gives
$$
\C_c(f_n)\leq \int_Z|\nabla f_n|^2_c\,d\mm=J(f_n),
$$
the lower semicontinuity of $\C_c$ provides the result.

Since $(Z,d,\mm)$ is a doubling space satisfying the $(1,2)$-Poincar\'e inequality on Lipschitz functions, 
in order to build $f_n$ we can use the inequality (see for instance \cite[Lemma~46]{AmCoDi})
$$
|\tilde f(z)-\tilde f(z')|\leq C\bigl((M|\nabla f|_*^2)^{1/2}(z)+(M|\nabla f|_*^2)^{1/2}(z')\bigr) d(z,z')
$$
valid at all approximate continuity of $f$, and the sets 
$$
E_n:=\left\{z\in Z:\ \max\{|\tilde f(z)|,(M|\nabla f|_*^2)^{1/2}(z)\}\leq n\right\}.
$$
By the MacShane lemma we can extend the restriction of $\tilde f$ to $E_n$ to a Lipschitz function $f_n$ with
${\rm Lip}(f_n)\leq Cn$ and $|f_n|\leq n$. Then, a standard argument based on maximal inequalities
shows that $f_n\to f$ in $L^2(Z,\mm)$, $\mm(Z\setminus E_n)\downarrow 0$ and $|\nabla f_n|^2$
are equi-integrable in $L^1(Z,\mm)$. Using the locality of weak gradients in the base spaces, this immediately
yields $J(f_n)\to J(f)$ and completes the proof of the inequality  $\C_c\leq J$ on $H^{1,2}(Z,d,\mm)$.

Now, let us prove the more delicate inclusion $D(J)\subset H^{1,2}(Z,d,\mm)$. In this proof it will be useful to assume that the distances $d_X$ and
$d_Y$ are geodesic; this is not restrictive, since the geodesic distances associated to $d_X$ and $d_Y$
are (because of doubling and Poincar\'e) equivalent to the original distances and even induce the same weak gradients
(see for instance \cite[Corollary~7.3.17]{HKST}, but note that we need only equivalence of distances). We will prove that
any $h\in D(J)$ whose support is contained in a product of balls $\overline{B}_R(x_0)\times \overline{B}_R(y_0)$ belongs to 
$H^{1,2}(Z,d,\mm)$. The
general case can be easily achieved by approximation. 

Let us proceed now to the proof of the inclusion $D(J)\subset H^{1,2}(Z,d,\mm)$.
Notice that, by the very definition of $J$,
for any $h\in D(J)$ one has not only $h^x\in H^{1,2}(Y,d_Y,\mm_Y)$ for $\mm_X$-a.e. $x\in X$ and $h^y\in H^{1,2}(X,d_X,\mm_X)$ for $\mm_Y$-a.e. $y\in Y$,
but also $\C_Y(h^x)\in L^1(X,\mm_X)$ and $\C_X(h^y)\in L^1(Y,\mm_Y)$. Thanks to Cheeger's theory
(see also Theorem~40 of the recent paper \cite{AmCoDi} for a different construction based on difference quotients and 
$\Gamma$-convergence which uses only the doubling property of the metric space)
we can find $L^2$-lower semicontinuous quadratic forms ${\cal E}_X$ in $L^2(X,\mm_X)$ and ${\cal E}_Y$ in $L^2(Y,\mm_Y)$
equivalent to the Cheeger energies, namely
\begin{equation}\label{eq:equivchee}
c_1\,\C_X\leq {\cal E}_X\leq \frac 1{c_1} \C_X,\qquad
c_2\,\C_Y\leq {\cal E}_Y\leq \frac 1{c_2} \C_Y
\end{equation}
for suitable positive structural constants $c_1,\,c_2$. The quadratic forms
$$
{\cal E}_{X,1}(f):={\cal E}_X(f)+\int_X f^2\,d\mm_X,\qquad
{\cal E}_{Y,1}(g):={\cal E}_Y(g)+\int_Y g^2\,d\mm_Y.
$$
induce Hilbertian structures in $H^{1,2}(X,d_X,\mm_X)$ and $H^{1,2}(Y,d_Y,\mm_Y)$ respectively. We also denote
by $J_1$ the quadratic form
$$
J_1(h):= \int_X {\cal E}_{Y,1}(f^x)\,d\mm_X(x)+\int_Y{\cal E}_{X,1}(f^y)\,d\mm_Y(y)
$$
and notice that $D(J_1)=D(J)$, because of \eqref{eq:equivchee}. 

Fix $x_{0} \in X,\, y_{0} \in Y$ and $R>0$. Fix the notation:
\[D(x_{0},R)=\{f\colon X\to \mathbb{R} \colon {\mathrm{supp}}(f) \subset \overline{B}(x_{0},R)\},\]
\[D(y_{0},R)=\{g\colon Y\to \mathbb{R} \colon {\mathrm{supp}}(g) \subset \overline{B}(y_{0},R)\}.\]

Then, for every $M>0$, the sets
\[\{f\in D(x_{0},R) \colon {\cal E}_{X,1}(f)\leq M\},\] 
\[\{g\in D(y_{0},R) \colon {\cal E}_{Y,1}(g)\leq M\},\]
are compact in $L^2(X,\mm_X)$ and $L^2(Y,\mm_Y)$ respectively. Indeed, \cite[Theorem~8.1]{HK} gives this Rellich-Kondrachov type result for domains in which the measure is doubling and also a global Poincar\'{e} inequality result holds (see \cite[Inequality (46)]{HK}). However, it is easy to see that balls in geodesic spaces are John domains (see Section~9.1 in \cite{HK} for the definition) and hence \cite[Theorem~9.7]{HK} provides the required global Poincar\'{e} inequality. Let
\[H_{X}=L^2(X,\mm_X)\cap D(x_{0},R),\]
\[H_{Y}=L^2(Y,\mm_Y)\cap D(y_{0},R).\] 
Notice that $H^{1,2}(X,d_{X},\mm_{X})\cap D(x_{0},R)$ is dense in $H_{X}$ with respect to the $L^{2}(X,m_{X})$ norm and 
similarly for $H^{1,2}(Y,d_{Y},\mm_{Y})\cap D(y_{0},R)$. Hence, by applying 
Theorem~\ref{thm:Bassano} to ${\cal E}_{X,1}$ in
\[H_{X}=L^2(X,\mm_X)\cap D(x_{0},R)\] 
and to ${\cal E}_{Y,1}$ in
\[H_{Y}=L^2(Y,\mm_Y)\cap D(y_{0},R),\] 
we can find complete orthonormal bases $(f_n)$ of $H_{X}$ and $(g_n)$ of $H_{Y}$
made of eigenvectors of the operators $L_X$ and $L_Y$ associated to these quadratic forms. In particular:
\begin{itemize}
\item[(a)] the linear semigroup $P^X_t$ associated to 
${\cal E}_{X,1}$ on $H_{X}$ leaves all $1$-dimensional vector spaces $\R f_n$ invariant, and an analogous property holds for $P^Y_t$;
\item[(b)] the functions $f_n$ are mutually orthogonal for the scalar product induced by ${\cal E}_{X,1}$ and an analogous 
property holds for $g_m$.
\end{itemize}  
Now, it is easily seen that the vector space $E$ spanned by the tensor products $f_ng_m$, where $f_{n}\in H^{1,2}(X,d_{X},m_{X})\cap D(x_{0},R)$ and $g_{m}\in H^{1,2}(Y,d_{Y},m_{Y})\cap D(y_{0},R)$, is dense in the space
\[\{h\in D(J)\colon {\mathrm{supp}}(h)\subset \overline{B}(x_{0},R)\times \overline{B}(y_{0},R)\}\]
w.r.t. the norm induced by $J_1$.
Indeed, a simple functional analytic argument 
is based on the fact that the linear semigroup $P_t$ induced by the quadratic form $J_1$ leaves $E$ invariant, thanks to (a) and
$$
P_t (f_ng_m)=(P^X_t f_n)(P^Y_t g_m)\qquad t\geq 0,
$$
and on the fact that $E$ is dense in $L^2(Z,\mm)$ norm; these two facts (see for instance \cite[Proposition~4.9]{AGS11b})
imply density in the stronger norm induced by $J_1$.

In order to conclude the proof we use first the convexity and $1$-homogeneity of $\sqrt{\C}$ and then
Lemma~\ref{lem2} to get
\begin{eqnarray*}
\sqrt{\C}(h)&\leq&\sum_{n,m}|\lambda_{n,m}|\sqrt{\C}(f_ng_m)=
\sum_{n,m}|\lambda_{n,m}|\sqrt{J}(f_ng_m)\\&\leq&
c\sum_{n,m}|\lambda_{n,m}|\sqrt{J_1}(f_ng_m)\leq c\biggl(\sum_{n,m} \lambda_{n,m}^2 J_1(f_n g_m)\biggr)^{1/2}\\
&=& c\sqrt{J_1}(h)
\end{eqnarray*}
for any $h\in E$ written as a sum $\sum_{n,m} \lambda_{n,m}f_ng_m$, with only finitely many
$\lambda_{n,m}$ nonzero, where in the last equality we used the fact that $(f_ng_m)$ are mutually orthogonal 
w.r.t. the scalar product induced by $J_1$ (as a consequence of (b)). Finally we can use the density of $E$ and the lower semicontinuity of $\C$ to conclude
that $\C\leq c^{2}J_1$ on the set
\[\{h\in D(J)\colon {\mathrm{supp}}(h)\subset \overline{B}(x_{0},R)\times \overline{B}(y_{0},R)\}.\]
By letting $R\to \infty$ we deduce $\C$ is finite on $D(J)$ so that $D(J)\subset D(\C)$.
\end{proof}

A simple consequence of \eqref{eq:partial_lap2} is that, independently of quadraticity assumptions on
$\C_X$ and $\C_Y$, the semigroup $P_t$ associated to $J$ (namely
the gradient flow of $J$) acts on tensor products $f_Xf_Y$ as follows
\begin{equation}\label{eq:partial_lap1}
P_t(f_Xf_Y)=(P_t^X f_X)(P_t^Y f_Y),
\end{equation}
where $P_t^X$ and $P_t^Y$ are the semigroups on the factors. Indeed, \eqref{eq:partial_lap2} with $\xi_X=-\Delta P^X_t f_X$ 
and $\xi_Y=-\Delta_YP_t^Yf_Y$ and the Leibniz rule give that
$$
-\frac{d}{dt} (P_t^X f_X)(P_t^Y f_Y)\in \partial J\bigl((P_t^X f_X)(P_t^Y f_Y)\bigr),
$$
which is the subdifferential formulation of the gradient flow.

\begin{proof} [of Theorem~\ref{thm:Cquad}]
Assuming that $\C_X$, $\C_Y$ are quadratic, it is clear that \eqref{eq:factor} implies that $\C$ are quadratic, so let us
prove the converse implication from $\C$ quadratic to \eqref{eq:factor}. By Lemma~\ref{lem1} and the equality
$\C_c=\C$ provided by Theorem~\ref{thm:I=C}, we know that the two 
lower semicontinuous quadratic forms
$$
Q_1(f):=\C(f),\qquad Q_2(f):=J(f)
$$
satisfy $Q_1\geq Q_2$, hence $Q=Q_1-Q_2$ is a nonnegative quadratic form which, by Lemma~\ref{lem2},
vanishes on tensor products. By the inequality $Q(u+v)\leq 2Q(u)+2Q(v)$, valid for nonnegative quadratic forms,
we obtain that $Q$ vanishes on the vector space spanned by tensor products. Now, as in the proof of Theorem~\ref{thm:PI_tensor}, 
it is easily seen that the vector space $E$ spanned by the tensor products is dense w.r.t. the norm $\|\cdot\|_{Q_2}:=\sqrt{\|\cdot\|_2^2+Q_2^2}$.
Indeed, the linear semigroup $P_t$ induced by the quadratic form $Q_2$ leaves $E$ invariant (because of
\eqref{eq:partial_lap1}) and
$E$ is dense in $L^2(Z,\mm)$ norm; these two facts (see for instance \cite[Proposition~4.9]{AGS11b})
imply density in the stronger norm induced by $\|\cdot\|_{Q_2}$. Therefore for any
$f\in L^2(Z,\mm)$ we can find $f_n\in E$ convergent to $f$ in $L^2(Z,\mm)$ with $Q_2(f_n)\to Q_2(f)$. Using lower
semicontinuity of $Q_1$ we get
$$
Q_1(f)\leq\liminf_{n\to\infty}Q_1(f_n)=\liminf_{n\to\infty}Q_2(f_n)=Q_2(f).
$$
Taking into account the inequality $|\nabla f^x|_{*,Y}^2+|\nabla f^y|_{*,X}^2\leq |\nabla f|_c^2(x,y)$ for $\mm$-a.e. 
$(x,y)\in Z$ provided by the combination of Theorem~\ref{thm:I=C} and Lemma~\ref{lem1}, we obtain \eqref{eq:factor}.

To prove the last statement of the theorem, assume that the factors are strongly asymptotically Hilbertian. Notice that 
$$
f\mapsto\int_Z |\nabla f|_c^2\,d\mm=\int_X \biggl(\int_Y|\nabla f^x|^2\,d\mm_Y\biggr)\,d\mm_X(x)+
\int_Y\biggl(\int_X|\nabla f^y|^2\,d\mm_X\biggr)\,d\mm_Y(y)
$$
is a quadratic form on ${\rm Lip}_{\rm loc}(Z)$. Since the lower semicontinuous relaxation of a quadratic form is still a quadratic
form, it follows that $\C=\C_c$ is a quadratic form. Therefore we can apply the first part of the statement
to obtain \eqref{eq:factor}.
\end{proof}

\section{The Sobolev space $H^{1,1}(X,d,\mm)$}\label{sec:4}

In this section we investigate in more detail the possible definitions of the space $H^{1,1}$ described in
\cite{AmDi}. We adopt the definition leading to the larger space and study a few structural properties. Then,
assuming doubling and $(1,1)$-Poincar\`e, we compare with more restrictive definitions.
The results of this section will be applied in the next section, dealing with $H^{1,1}$ functions in
$X\times\R$. In this section we assume that $\mm$ is finite on bounded sets.

\begin{definition} [The space $H^{1,1}(X,d,\mm)$] \label{def:H11} We denote by $H^{1,1}(X,d,\mm)$ the subspace of all $f\in BV(X,d,\mm)$
satisfying:
\begin{itemize}
\item[(i)] $f\circ\gamma\in H^{1,1}(0,1)$ for $1$-almost every $\gamma\in C([0,1];X)$;
\item[(ii)] $|Df|\ll\mm$.
\end{itemize} 
By analogy with the Sobolev case, we denote by $|\nabla f|_{*,1}$ the density of $|Df|$ w.r.t. $\mm$.
\end{definition}

In view of the example mentioned in Remark~\ref{rem:bad_things_occur}, we know that (ii) would not be sufficient to provide
Sobolev regularity of $f$ along $1$-almost every curve (not even if $|Df|$ has a bounded density), 
a natural requirement for Sobolev functions,
also in view of the connection with other definitions. In the presence of doubling and $(1,1)$-Poincar\'e inequality for locally Lipschitz
functions, however, (ii) is sufficient, see the proof of the implication from (iii) to (i) in Theorem~\ref{thm:equivSob1} which only
uses $|Df|\ll\mm$ (see also \cite[Theorem~4.6]{HKLL14}).

We use the notation $|\nabla f|_{*,1}$ because, at this level of generality, we expect that weak gradients depend on the
integrability exponent, even for Lipschitz functions, see \cite{DiSp} for examples compatible even with the doubling
assumption.
Notice that the obvious inequality $|Df|\leq |\nabla f|\mm$ on locally Lipschitz functions gives $|\nabla f|_{*,1}\leq |\nabla f|$
$\mm$-a.e. in $X$ for all $f\in {\rm Lip}_{\rm loc}(X)$, while additional assumptions are needed to reverse this inequality (see
\eqref{eq:atleast} below).

In the next proposition we are able to prove the locality of $|\nabla f|_{*,1}$. 
It was difficult for us to prove this fact using only \eqref{optimal_sequence_BV} (because, unlike the Sobolev case, the convergence of
slopes is weak and not strong), therefore we adopt the dual point of
view provided by Theorem~\ref{thm:amdm}. 

\begin{proposition}[Locality of $|\nabla f|_{*,1}$]  \label{plocaBV} Let $f\in H^{1,1}(X,d,\mm)$. Then
\begin{equation}\label{eq:plocaBV}
|\nabla f|_{*,1}=0\qquad\text{$\mm$-a.e. in $\{f=0\}$.}
\end{equation}
In particular, if $f,\,g$ belong to $H^{1,1}(X,d,\mm)$, then
\begin{equation}\label{eq:plocaBV1}
|\nabla f|_{*,1}=|\nabla g|_{*,1}\qquad\text{$\mm$-a.e. in $\{f=g\}$.}
\end{equation}
\end{proposition}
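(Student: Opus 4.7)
The strategy is to exploit the dual characterization of $|Df|$ in Theorem~\ref{thm:amdm} and, in particular, the minimality of $|Df|$ among measures satisfying condition (ii) there. For \eqref{eq:plocaBV} I would construct a candidate measure supported on $\{f\neq 0\}$ which satisfies (ii); then minimality forces $|Df|$ to be dominated by it, so $|Df|(\{f=0\})=0$. Part \eqref{eq:plocaBV1} will then follow by applying \eqref{eq:plocaBV} to $f-g$ and combining with the subadditivity \eqref{eq:subadd}.

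For \eqref{eq:plocaBV}, fix a representative of $f$ and set $\tilde\mu:=|\nabla f|_{*,1}\chi_{\{f\neq 0\}}\mm$, so that $\tilde\mu\leq|Df|$. The key one-dimensional fact is the classical locality of the derivative: if $u\in H^{1,1}(0,1)$ then $u'=0$ $\Leb{1}$-a.e.\ on $\{u=0\}$, hence $|Du|$ is concentrated on $\{u\neq 0\}$. By Definition~\ref{def:H11}(i), for any $\infty$-test plan $\ppi$ and $\ppi$-a.e.\ curve $\gamma$ the function $u=f\circ\gamma$ lies in $H^{1,1}(0,1)$, so $|D(f\circ\gamma)|$ is concentrated on $\gamma^{-1}(\{f\neq 0\})$; consequently the push-forward $\gamma_\sharp|D(f\circ\gamma)|$ is concentrated on $\{f\neq 0\}$. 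Integrating against $\ppi$ and using property (ii) of Theorem~\ref{thm:amdm} with the measure $|Df|$, we obtain for every Borel set $A\subset X$:
\begin{equation*}
\int \gamma_\sharp|D(f\circ\gamma)|(A)\,d\ppi(\gamma)=\int \gamma_\sharp|D(f\circ\gamma)|(A\cap\{f\neq 0\})\,d\ppi(\gamma)\leq C(\ppi)\|{\rm Lip}(\gamma)\|_\infty\,\tilde\mu(A).
\end{equation*}
Thus $\tilde\mu$ itself satisfies (ii), and the minimality clause of Theorem~\ref{thm:amdm} gives $|Df|\leq\tilde\mu$, forcing $|\nabla f|_{*,1}=0$ $\mm$-a.e.\ on $\{f=0\}$.

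For \eqref{eq:plocaBV1}, I would first check that $f-g\in H^{1,1}(X,d,\mm)$: indeed $|D(f-g)|\leq|Df|+|Dg|\ll\mm$ by \eqref{eq:subadd}, and $(f-g)\circ\gamma\in H^{1,1}(0,1)$ for $1$-a.e.\ $\gamma$. Applying \eqref{eq:plocaBV} to $f-g$ gives $|\nabla(f-g)|_{*,1}=0$ $\mm$-a.e.\ on $\{f=g\}$, and combining with the densities' inequality $|\nabla f|_{*,1}\leq|\nabla(f-g)|_{*,1}+|\nabla g|_{*,1}$ (and its symmetric analogue), both consequences of \eqref{eq:subadd}, yields $|\nabla f|_{*,1}=|\nabla g|_{*,1}$ $\mm$-a.e.\ on $\{f=g\}$.

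I expect the main technical care to lie in the handling of Lebesgue representatives: the pointwise set $\{f=0\}$ is only defined up to $\mm$-null modifications, but the argument is insensitive to this because, as remarked after Theorem~\ref{thm:amdm}, for any $\infty$-test plan $\ppi$ the $\ppi$-a.e.\ curve $\gamma$ meets any prescribed $\mm$-null set in an $\Leb{1}$-null set of parameters. Hence both $f\circ\gamma$ and $|D(f\circ\gamma)|$ are independent of the chosen representative, and statements like ``$\mm$-a.e.\ on $\{f=0\}$'' are unambiguous. One should also remember that ``1-a.e.'' in Definition~\ref{def:H11} means ``$\ppi$-a.e.\ for every $\infty$-test plan $\ppi$'', which is exactly what permits the integration against arbitrary $\ppi$ used above.
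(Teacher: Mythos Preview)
Your proof is correct and follows essentially the same route as the paper's: both exploit the one-dimensional locality of $H^{1,1}(0,1)$ functions to show that $\gamma_\sharp|D(f\circ\gamma)|$ gives no mass to $\{f=0\}$, then use the minimality clause of Theorem~\ref{thm:amdm} with the candidate measure $\chi_{\{f\neq 0\}}|Df|$, and finally derive \eqref{eq:plocaBV1} from \eqref{eq:plocaBV} via the subadditivity \eqref{eq:subadd}. Your write-up is in fact slightly more careful than the paper's in that you explicitly verify $f-g\in H^{1,1}(X,d,\mm)$ and address the representative issue.
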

\begin{proof} It is well known that for a Sobolev function $g:(0,1)\to\R$ the property holds, namely
$g'=0$ $\Leb{1}$-a.e. on $\{g=0\}$. Given a $\infty$-test plan $\ppi$, from part (i) of Definition~\ref{def:H11} it follows that
$$
(f\circ\gamma)'=0\quad\text{$\Leb{1}$-a.e. on $\gamma^{-1}(\{f=0\})$,
for $\ppi$-a.e. $\gamma$.}
$$
Since $|D(f\circ\gamma)|=|(f\circ\gamma)'|\Leb{1}$,
the definition of push-forward gives $\gamma_\sharp |D(f\circ\gamma)|(\{f=0\})=0$
for $\ppi$-a.e. $\gamma$. By integration w.r.t. $\ppi$, since $\ppi$ is arbitrary we get that the measure
$$
\mu:=\chi_{\{f\neq 0\}}|\nabla f|_{*,1}\mm=\chi_{\{f\neq 0\}}|Df|
$$
still satisfies (ii) of Theorem~\ref{thm:amdm}. Then, the minimality of $|Df|=|\nabla f|_{*,1}\mm$ gives \eqref{eq:plocaBV}.

It follows from the inequalities (derived from \eqref{eq:subadd}) that 
$$
-|\nabla (f-g)|_{*,1}+|\nabla g|_{*,1} \leq|\nabla f|_{*,1}\leq |\nabla g|_{*,1}+|\nabla (f-g)|_{*,1}
\qquad\text{$\mm$-a.e. in $X$.}$$
Using \eqref{eq:plocaBV} we obtain \eqref{eq:plocaBV1}
\end{proof}

If we assume that the $(1,1)$-Poincar\'e inequality for Lipschitz functions
we can specialize \eqref{eq:PIBV} to $H^{1,1}(X,d,\mm)$, obtaining
\begin{equation}\label{eq:PIH11}
\media_{B_r(x)}| f-f_{x,r}|\,d\mm\leq c r \int_{B_{\lambda r}(x)}|\nabla f|_{*,1}\,d\mm\qquad\forall f\in H^{1,1}(X,d,\mm).
\end{equation}
Analogously, \eqref{eq:liploc} gives
\begin{equation}\label{eq:liploc2}
|\tilde f(x)-\tilde f(y)|\leq c d(x,y)\bigl( M_r|\nabla f|_{*,1}(x)+M_r|\nabla f|_{*,1}(y)\bigr). 
\end{equation}
When $f$ is locally Lipschitz we can use \eqref{eq:liploc2} at approximate continuity points of $M_r|\nabla f_{*,1}|$ 
(see for instance the argument in \cite{AmCoDi}) to get
$$
|\nabla f|\leq cM_r(|\nabla f|_{*,1})\qquad\text{$\mm$-a.e. in $X$, for all $f\in {\rm Lip}_{\rm loc}(X)$.}
$$
Letting $r\downarrow 0$ eventually gives, for some structural constant $c$,
\begin{equation}\label{eq:atleast}
|\nabla f|\leq c|\nabla f|_{*,1}\qquad\text{$\mm$-a.e. in $X$, for all $f\in {\rm Lip}_{\rm loc}(X)$.}
\end{equation}

Now, we can combine the ``local'' Lipschitz estimate \eqref{eq:liploc2}, 
with the weak $L^1$ and asymptotic estimates \eqref{eq:maximal_estimates1},
\eqref{eq:maximal_estimates2} to get
the following Lusin type approximation result of $H^{1,1}$ functions by Lipschitz functions; this way, we get the equivalence
with other, a priori stronger, definitions.

\begin{theorem} [Equivalent definitions of $H^{1,1}(X,d,\mm)$] \label{thm:equivSob1}
Let $f\in L^1(X,\mm)$ and let us consider the following conditions:
\begin{itemize}
\item[(i)] there exist $f_n\in {\rm Lip}(X)\cap L^1(X,\mm)$ with $f_n\to f$ in $L^1(X,\mm)$, $\mm(\{f\neq f_n\})\to 0$ and
$|\nabla f_n|$ equi-integrable;
\item[(ii)] there exist a Borel function $g:X\to [0,\infty)$ and a representative $\hat{f}$ of $f$ such that 
$\int_X g\,d\mm<\infty$ and
\begin{equation}\label{eq:treponzio1}
|\hat{f}(\gamma(1))-\hat{f}(\gamma(0))|\leq\int_\gamma g
\qquad\text{for ${\rm Mod}_1$-a.e. curve $\gamma$;}
\end{equation}
\item[(iii)] $f\in H^{1,1}(X,d,\mm)$.
\end{itemize} 
Then (i)$\Rightarrow$(ii), with $g$ limit point of $|\nabla f_n|$ in the weak $L^1(X,\mm)$ topology, and (ii)$\Rightarrow$(iii), with
\begin{equation}\label{eq:treponzio2}
|\nabla f|_{*,1}\leq g\qquad\text{$\mm$-a.e. in $X$.}
\end{equation}
If $(X,d,\mm)$ is doubling and the $(1,1)$-Poincar\'e inequality for locally Lipschitz functions holds, then
$f\in BV(X,d,\mm)$ and $|Df|\ll\mm$ imply (i). In particular (iii)$\Rightarrow$(i).
\end{theorem}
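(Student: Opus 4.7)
The plan is to prove (i)$\Rightarrow$(ii)$\Rightarrow$(iii), and then under doubling and $(1,1)$-Poincar\'e to show that $f\in BV(X,d,\mm)$ with $|Df|\ll\mm$ already implies (i), closing the loop and yielding in particular (iii)$\Rightarrow$(i). For (i)$\Rightarrow$(ii), equi-integrability of $(|\nabla f_n|)$ and Dunford--Pettis yield a subsequence weakly $L^1$-convergent to some $g\in L^1(X,\mm)$. I then invoke Mazur's lemma to pass to convex combinations $\tilde g_k\to g$ \emph{strongly} in $L^1(X,\mm)$; the corresponding convex combinations $\tilde f_k$ of the $f_n$ are still locally Lipschitz and, by subadditivity of the slope, satisfy $|\nabla\tilde f_k|\le\tilde g_k$. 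Fuglede's lemma then provides a further subsequence along which $\int_\gamma\tilde g_k\to\int_\gamma g<\infty$ for $\mathrm{Mod}_1$-a.e.\ curve $\gamma$. Since $\mm(\{\tilde f_k\neq f\})\to 0$, a Borel--Cantelli/diagonal argument produces a Borel representative $\hat f$ of $f$ with $\tilde f_k\to\hat f$ pointwise off an $\mm$-negligible set which a $\mathrm{Mod}_1$-a.e.\ curve meets on a $\Leb{1}$-null set of times. Passing to the limit in $|\tilde f_k(\gamma(1))-\tilde f_k(\gamma(0))|\le\int_\gamma|\nabla\tilde f_k|\le\int_\gamma\tilde g_k$ yields \eqref{eq:treponzio1} for $\hat f$ with $g$.

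For (ii)$\Rightarrow$(iii), I verify condition (ii) of Theorem~\ref{thm:amdm} with $\mu=g\mm$. Since $\mathrm{Mod}_1$-negligible implies $1$-negligible, any $\infty$-test plan $\ppi$ gives that $\ppi$-a.e.\ curve $\gamma$ satisfies \eqref{eq:treponzio1}, hence $\hat f\circ\gamma$ is absolutely continuous with $|(\hat f\circ\gamma)'(t)|\le g(\gamma(t))|\dot\gamma(t)|$ $\Leb{1}$-a.e.\ in $(0,1)$. Fubini then gives, for any Borel $B\subset X$,
\[
\int\gamma_\sharp|D(\hat f\circ\gamma)|(B)\,d\ppi(\gamma)\le\|\mathrm{Lip}(\gamma)\|_\infty\int_0^1\!\int_B g\,d(e_t)_\sharp\ppi\,dt\le C(\ppi)\|\mathrm{Lip}(\gamma)\|_\infty\int_B g\,d\mm.
\]
Theorem~\ref{thm:amdm} now forces $f\in BV(X,d,\mm)$ with $|Df|\le g\mm$, whence $|Df|\ll\mm$ and \eqref{eq:treponzio2}; property (i) of Definition~\ref{def:H11} is inherited directly from the $\mathrm{Mod}_1$-a.e.\ statement of (ii).

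For $f\in BV(X,d,\mm)$ with $|Df|\ll\mm$ under doubling and $(1,1)$-Poincar\'e, I run a Lusin-type truncation. Let $\tilde f$ denote the approximate-continuity representative of $f$ (defined $\mm$-a.e.\ by doubling) and set
\[
E_n=\bigl\{x\in X:\ |\tilde f(x)|\le n,\ M_\infty|\nabla f|_{*,1}(x)\le n\bigr\}.
\]
By \eqref{eq:liploc2} the restriction $\tilde f|_{E_n}$ is $cn$-Lipschitz for a structural constant $c$; I McShane-extend and truncate to produce $f_n\in\mathrm{Lip}(X)$ with $|f_n|\le n$, $\mathrm{Lip}(f_n)\le cn$ and $\{f_n\neq f\}\subset X\setminus E_n$. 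Absolute continuity of $|f|\mm$ together with \eqref{eq:maximal_estimates2} give both $\mm(X\setminus E_n)\to 0$ and, crucially, $n\mm(X\setminus E_n)\to 0$, from which $f_n\to f$ in $L^1(X,\mm)$ and $\mm(\{f_n\neq f\})\to 0$ follow at once. For equi-integrability of $(|\nabla f_n|)$, I combine the locality \eqref{eq:plocaBV1} with \eqref{eq:atleast} to get $|\nabla f_n|\le c|\nabla f|_{*,1}$ $\mm$-a.e.\ on $E_n$ (since $f_n=f$ there), while on $X\setminus E_n$ the crude pointwise bound $|\nabla f_n|\le cn$ combined with $n\mm(X\setminus E_n)\to 0$ controls the bad-set contribution uniformly in $n$.

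The main obstacle is this last implication: the weak $L^1$ estimate \eqref{eq:maximal_estimates1} alone does not secure equi-integrability of the slopes of the Lipschitz truncations, and it is precisely the refined asymptotic maximal estimate \eqref{eq:maximal_estimates2}, combined with the $H^{1,1}$-locality \eqref{eq:plocaBV1} needed to replace $|\nabla f_n|$ by $|\nabla f|_{*,1}$ on the good set $E_n$, that makes the argument close. The (i)$\Rightarrow$(ii) step is also delicate because weak $L^1$ convergence of the slopes must be upgraded, via Mazur plus Fuglede, to pointwise-curve information compatible with a single representative $\hat f$.
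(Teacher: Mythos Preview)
Your argument is essentially the paper's: Mazur plus Fuglede for (i)$\Rightarrow$(ii), the curve-based $BV$ characterization (Theorem~\ref{thm:amdm}) for (ii)$\Rightarrow$(iii), and maximal-function Lipschitz truncation combined with locality \eqref{eq:plocaBV1}, \eqref{eq:atleast} and the asymptotic estimate \eqref{eq:maximal_estimates2} for the reverse implication under doubling and Poincar\'e.

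One imprecision worth tightening in (i)$\Rightarrow$(ii): the fact that $\mathrm{Mod}_1$-a.e.\ curve meets the non-convergence set on an $\Leb{1}$-null set of times is correct but does \emph{not} by itself let you pass to the limit at the endpoints $\gamma(0),\gamma(1)$ in the upper-gradient inequality. The standard fix (to which the paper too defers, citing \cite{Sh} and \cite[Theorem~10.5]{Hei}) is that Fuglede's lemma actually yields $\int_\gamma|\tilde g_k-g|\to 0$ along $\mathrm{Mod}_1$-a.e.\ curve, so the functions $\tilde f_k\circ\gamma$ are equi-absolutely continuous and hence converge \emph{uniformly} on $[0,1]$; one then defines $\hat f$ as the pointwise limit of $\tilde f_k$ wherever it exists, which in particular includes every point of $\mathrm{Mod}_1$-a.e.\ curve.
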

\begin{proof} (i)$\Rightarrow$(ii). 
See for instance \cite{Sh}, \cite[Theorem~10.5]{Hei} and the recent monograph \cite{HKST} for the construction of the representative $\hat{f}$. The main point
is to find Lipschitz functions $\tilde{f}_n$ convergent to $f$ $\mm$-a.e. in $X$ and upper gradients $g_n$ of $f$
strongly convergent to $g$ in $L^1(X,\mm)$, using also the fact that this implies $\int_\gamma g_n\to\int_\gamma g$
for ${\rm Mod}_1$-a.e. $\gamma$. The functions $\tilde{f}_n$ are finite convex combinations of $f_n$, where the
coefficients of the convex combinations are chosen in such a way that the corresponding convex combination $g_n$
of $|\nabla f_n|$, which are upper gradients of $\tilde{f}_n$, are strongly convergent.

\noindent 
(ii)$\Rightarrow$(iii). 
Since families of subcurves of a ${\rm Mod}_1$-negligible set of curves are still ${\rm Mod}_1$-negligible,
the argument in \cite{Sh} shows that $\hat{f}$ is absolutely continuous (and not only $H^{1,1}$) along ${\rm Mod}_1$-a.e.
curve, with
\begin{equation}\label{eq:trani1}
\bigl|\frac{d}{dt}f\circ\gamma(t)\bigr|\leq g(\gamma(t))|\dot\gamma(t)|\qquad\text{for $\Leb{1}$-a.e. $t\in (0,1)$, for
${\rm Mod}_1$-a.e. $\gamma$.}
\end{equation}
Since we already observed that ${\rm Mod}_1$-negligible sets are $1$-negligible, we obtain (i) of Definition~\ref{def:H11}.
In connection with condition (ii) of Definition~\ref{def:H11}, we can use \eqref{eq:trani1} to show that
the measure $\mu=g\mm$ satisfies 
$$
\int \gamma_\sharp|D(f\circ\gamma)|\,d\ppi(\gamma)\leq C(\ppi)\|{\rm Lip}(\gamma)\|_\infty \mu
$$
for any $\infty$-test plan $\ppi$. The minimality property of $|Df|$ stated in Theorem~\ref{thm:amdm}
then gives $|Df|\leq g\mm$, so that $|Df|\ll\mm$.

\noindent
(iii)$\Rightarrow$(i), under the doubling and Poincar\'e assumptions.
We apply \eqref{eq:liploc} with $r=\infty$ and denote by $E_n$ the set of approximate continuity points of $f$
where $M_\infty(|\nabla f|_{*,1}+|f|)$ is smaller than $n$. By McShane Lipschitz extension theorem, we can extend 
$\tilde f\vert_{E_n}$ to a $2cn$-Lipschitz function on $X$, denoted by $f_n$, with $|f_n|\leq n$. 
Since the weak $L^1$ estimate holds, it is then clear that $f_n\in L^1(X,\mm)$ and 
$\mm(\{f\neq f_n\})\leq\mm(X\setminus E_n)\to 0$.
In connection with equi-integrability of $|\nabla f_n|$, namely
$$
\lim_{z\to\infty}\limsup_{n\to\infty}\int_{\{|\nabla f_n|>z\}}|\nabla f_n|\,d\mm=0,
$$
it suffices to split the integral on $E_n$ and on $X\setminus E_n$. The former can be estimated uniformly in $n$ from above, thanks to 
\eqref{eq:atleast} and locality, with
$$
c\int_{E_n\cap\{|\nabla f_n|_{*,1}>z/c\}}|\nabla f_n|_{*,1}\,d\mm\leq
c\int_{\{|\nabla f|_{*,1}>z/c\}}|\nabla f|_{*,1}\,d\mm
$$
which is infinitesimal as $z\to\infty$.
The latter can be estimated using \eqref{eq:maximal_estimates2}. A similar and simpler argument shows
that $f_n\to f$ in $L^1(X,\mm)$.
\end{proof}

The characterization (ii) in Theorem~\ref{thm:equivSob1} of $H^{1,1}(X,d,\mm)$ suggests another definition of minimal
$1$-gradient $|\nabla f|_{w,1}$, namely the smallest function $g$ (up to $\mm$-negligible sets) 
such that, for some representative $\hat{f}$ of $f$, the inequality
\eqref{eq:treponzio1} holds. This is the point of view also adopted, besides the relaxation point of view,
in the theory of Sobolev spaces with exponent $p>1$, see \cite{Sh}, \cite{HKST} and \cite{AGS12} for a comparison between
the definitions. From \eqref{eq:treponzio2} we immediately get
\begin{equation}\label{eq:treponzio3}
|\nabla f|_{*,1}\leq |\nabla f|_{w,1}\qquad\text{$\mm$-a.e. in $X$,}
\end{equation}
while the inequality \eqref{eq:atleast} gives, for some structural constant $c$,
\begin{equation}\label{eq:treponzio4}
|\nabla f|_{w,1}\leq |\nabla f|\leq c|\nabla f|_{*,1}\qquad\text{$\mm$-a.e. in $X$.}
\end{equation}

\begin{remark}\label{rem:do_not_coincide}{\rm 
In general equality does not hold in \eqref{eq:treponzio3}, not even under the doubling and Poincar\'e
assumptions: in \cite{HKLL14}, a metric measure space is built
endowing $X=[0,1]$ with the Euclidean distance and the weighted measure $\mm=\omega\Leb{1}\in\Probabilities{X}$, with $\omega\equiv 1$
on a ``fat'' Cantor set $K\subset (0,1)$ (i.e. a compact totally disconnected set with positive Lebesgue measure) and $\omega=1/2$, say, on
$(0,1)\setminus K$. It is clear that $(X,d,\mm)$, being comparable to the standard Euclidean structure, satisfies the
doubling and Poincar\'e assumptions. On the other hand, in \cite{HKLL14} Lipschitz functions $f_n$ convergent to the 
identity function $f$ in $L^1(X,\mm)$ are built in such a way that
$$
\limsup_{n\to\infty}\int_X|\nabla f_n|\,d\mm<\int_X |\nabla f|\,d\mm.
$$
Now, the very definition of $H^{1,1}(X,d,\mm)$ gives that $|Df|(X)<\int_X|\nabla f|\,d\mm$,
hence $|Df|=|\nabla f|_{*,1}\mm$ gives that $|\nabla f|_{*,1}<|\nabla f|$ in a set with positive $\mm$-measure. Since it is easy to check
that $|\nabla f|=|\nabla f|_{w,1}=1$ $\mm$-a.e. for the identity function, we have a situation where
the two notions of weak gradient in the limiting case $p=1$ differ. The difference of the notions arises
basically from the fact that $f\mapsto\int_X |\nabla f|_{*,1}\,d\mm$ has a $L^1$-lower semicontinuity property built in, while
$f\mapsto\int_X|\nabla f|_{w,1}\, d\mm$ a priori does not. \fr
}\end{remark}

In the following proposition we show that $|\nabla f|_{w,1}$ enjoys a stronger approximation property
by Lipschitz functions, compared to \eqref{optimal_sequence_BV}. In the proof we will use the identity
\begin{equation}\label{eq:cdc}
|\nabla f|=|\nabla f|_{w,1}\qquad\text{$\mm$-a.e. in $X$, for all $f\in {\rm Lip}_{\rm loc}(X)$,}
\end{equation}
which is proved in \cite[Theorem~12.5.1]{HKST} and it extends Theorem~\ref{thm:cheeger} to the limiting case $p=1$. 
For the sake of completeness we sketch the main ideas used to prove \eqref{eq:cdc}. Clearly, it suffices to prove that for every $f\in {\rm Lip}_{\rm loc}(X)$ 
it holds $|\nabla f|\leq |\nabla f|_{w,1}$ $\mm$-a.e. in $X$. By the Vitali-Carath\'eodory theorem (see \cite[Theorem~3.2.4]{HKST}),
it suffices to show that $|\nabla f|\leq g$ for any countably valued lower semicontinuous
upper gradient $g\in L^{\infty}_{\rm loc}(X)$ of $f$ for which there is a positive real number $c$ with $g\geq c$. By \cite[Lemma 12.5.9]{HKST} there is sequence $(f_k)$ of Lipschitz functions, with a
corresponding sequence $(g_k)$ of continuous upper gradients, such that $(f_k)$ converges to $f$ in
$L^2_{\rm loc}(X)$ and $\limsup_k g_k\leq g$ $\mm$-a.e. in $X$. Then $(f_k)$ is bounded in $H^{1,2}_{\rm loc}(X)$ and we conclude exactly as in the proof of \cite[Theorem~12.5.1]{HKST}. 

\begin{proposition}\label{prop:strongapp} Assume that $(X,d,\mm)$ is doubling and that the $(1,1)$-Poincar\'e inequality
holds for locally Lipschitz functions. Then for all $f\in H^{1,1}(X,d,\mm)$ there exist locally Lipschitz functions $f_n$ with
$f_n\to f$ and $|\nabla f_n| \to |\nabla f|_{w,1}$ in $L^1(X,\mm)$.
\end{proposition}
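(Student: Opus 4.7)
The plan is to build the approximating sequence $(f_n)$ by recycling the Lusin-type Lipschitz truncation already used in the proof of (iii)$\Rightarrow$(i) of Theorem~\ref{thm:equivSob1}, and then upgrade the equi-integrability provided there to strong $L^1$ convergence of the slopes by combining \eqref{eq:cdc} with a locality property of $|\nabla\cdot|_{w,1}$.

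Concretely, given $f\in H^{1,1}(X,d,\mm)$, set
$$
E_n:=\bigl\{x\in X:\ x\text{ is an approximate continuity point of }f,\ M_\infty(|\nabla f|_{*,1}+|f|)(x)\le n\bigr\}
$$
and let $f_n$ be a MacShane Lipschitz extension of $\tilde f|_{E_n}$ to $X$ with $|f_n|\le n$. The proof of (iii)$\Rightarrow$(i) in Theorem~\ref{thm:equivSob1} shows that $f_n\to f$ in $L^1(X,\mm)$, $\mm(X\setminus E_n)\to 0$ and $(|\nabla f_n|)$ is equi-integrable in $L^1(X,\mm)$; the inequality $|\nabla f|_{w,1}\le c|\nabla f|_{*,1}$ in \eqref{eq:treponzio4} also guarantees $|\nabla f|_{w,1}\in L^1(X,\mm)$.

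On the set $E_n$ one has $f_n=f$ $\mm$-a.e. By locality of $|\nabla\cdot|_{w,1}$ (established below), $|\nabla f_n|_{w,1}=|\nabla f|_{w,1}$ $\mm$-a.e.\ on $E_n$, and combining with \eqref{eq:cdc} applied to the locally Lipschitz $f_n$ gives $|\nabla f_n|=|\nabla f|_{w,1}$ $\mm$-a.e.\ on $E_n$. Consequently
$$
\int_X\bigl||\nabla f_n|-|\nabla f|_{w,1}\bigr|\,d\mm\le \int_{X\setminus E_n}|\nabla f_n|\,d\mm+\int_{X\setminus E_n}|\nabla f|_{w,1}\,d\mm,
$$
and both terms tend to $0$: the first by equi-integrability of $(|\nabla f_n|)$ combined with $\mm(X\setminus E_n)\to 0$, the second by absolute continuity of the integral.

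The key step, and main obstacle, is the locality statement for $|\nabla\cdot|_{w,1}$: if $u,v\in H^{1,1}(X,d,\mm)$ agree $\mm$-a.e.\ on a Borel set $A$, then $|\nabla u|_{w,1}=|\nabla v|_{w,1}$ $\mm$-a.e.\ on $A$. I would prove this in parallel with Proposition~\ref{plocaBV}: the crucial step is to show that $|\nabla h|_{w,1}=0$ $\mm$-a.e.\ on $\{h=0\}$ for every $h\in H^{1,1}(X,d,\mm)$. Since for ${\rm Mod}_1$-a.e.\ curve $\gamma$ the composition $\hat h\circ\gamma$ is absolutely continuous and hence $(\hat h\circ\gamma)'=0$ $\Leb{1}$-a.e.\ on $\gamma^{-1}(\{h=0\})$, the function $\chi_{\{h\ne 0\}}|\nabla h|_{w,1}$ still satisfies \eqref{eq:treponzio1}, and minimality of $|\nabla h|_{w,1}$ forces the vanishing. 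Applying this to $h:=u-v$ and using the elementary subadditivity $|\nabla u|_{w,1}\le|\nabla v|_{w,1}+|\nabla(u-v)|_{w,1}$ (immediate from summing weak upper gradients) completes the argument.
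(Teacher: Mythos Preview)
Your proof is correct and follows essentially the same route as the paper: both use the Lipschitz truncations $(f_n)$ from Theorem~\ref{thm:equivSob1}(i), the identity \eqref{eq:cdc}, and locality of $|\nabla\cdot|_{w,1}$ to conclude. The only noteworthy difference is how locality is justified: the paper derives it in one line from \eqref{eq:treponzio4} combined with the already-established locality of $|\nabla\cdot|_{*,1}$ (Proposition~\ref{plocaBV}), whereas you prove it directly from the definition via the upper-gradient inequality along curves --- a route the paper itself mentions parenthetically as an alternative.
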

\begin{proof} Using the identity \eqref{eq:cdc} it suffices to prove $|\nabla f_n|_{w,1}\to |\nabla f|_{w,1}$. Now, notice
that, because of the inequality \eqref{eq:treponzio4}, the weak gradient $|\nabla g|_{w,1}$ is local on Borel sets as well
as $|\nabla g|_{*,1}$ (but, this property could be proved directly with the definition of $|\nabla g|_{w,1}$). Therefore the
sequence $(f_n)$ provided by Theorem~\ref{thm:equivSob1}(i) provides the result.
\end{proof}

\section{Sets of finite perimeter and the area formula}\label{sec:per}

In the setup of the tensorization Section~\ref{sec:tenso}, assume that $Y=\R$ endowed with the standard Euclidean structure
and the Lebesgue measure $\Leb{1}$.
This is a $\sigma$-finite and not finite metric measure space, but we can use a localization argument for weak gradients
(in the same spirit of \cite[Theorem~1.1]{AmDi} or \cite[Lemma~4.11]{AGS11a}) and apply all
results of that section to this situation. Departing a bit from the notation of Section~\ref{sec:tenso}, we  
will then consider a m.m. space $(X,d,\mm)$ and endow $X\times\R$ with the product distance $\tilde d$ and
the product measure $\tilde\mm=\mm\times\Leb{1}$. 

If we consider a set of finite perimeter $E\subset X\times \R$ that is the subgraph of a function $f:X\to\R$, namely
$$
E_f:=\bigl\{(x,t)\in X\times\R:\ t<f(x)\bigr\}
$$
then it is natural to compare the perimeter of $E_f$, defined according to the well established metric $BV$ theory in $X\times\R$, to the
area of the graph of $f$. In this context the natural regularity condition on $f$ is $f\in H^{1,1}(X,d,\mm)$ (or even
$BV(X,d,\mm)$); namely we would like to know whether
\begin{equation}\label{eq:goodper}
P(E_f,B\times\R)=\int_B\sqrt{1+|\nabla f|_{*,1}^2}\,d\mm\qquad\text{for all $B\subset X$ Borel}
\end{equation}
for $f\in H^{1,1}(X,d,\mm)$.
We will provide a partial positive answer using the tensorization property of weak gradients; this is 
not surprising, since locally Lipschitz functions on $A$ are used to define the perimeter $P(E_f,A)$ for $A\subset X\times\R$
open.

\begin{theorem} Assume that $(X,d,\mm)$ is a doubling metric measure space, and that the $(1,1)$-Poincar\'e inequality holds
for Lipschitz functions. Then:
\begin{itemize}
\item[(a)] for all $f\in H^{1,1}(X,d,\mm)$ and all $B\subset X$ Borel one has
\begin{equation}\label{eq:goodper11}
P(E_f,B\times\R)\leq\int_B\sqrt{1+|\nabla f|_{w,1}^2}\,d\mm.
\end{equation}
\item[(b)] for all $f\in BV(X,d,\mm)$, denoting by $|Df|=|\nabla f|_{*,1}\mm+|D^s f|$ the Radon-Nikodym decomposition of $|Df|$,
one has
\begin{equation}\label{eq:goodper12}
P(E_f,B\times\R)\geq \int_B\sqrt{1+|\nabla f|_{*,1}^2}\,d\mm+|D^s f|(B)\qquad\text{for all $B\subset X$ Borel.}
\end{equation}
\end{itemize}
\end{theorem}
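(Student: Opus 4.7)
I reduce to the locally Lipschitz case via Proposition~\ref{prop:strongapp}: for $f\in H^{1,1}(X,d,\mm)$, pick locally Lipschitz $f_n$ with $f_n\to f$ and $|\nabla f_n|\to |\nabla f|_{w,1}$ in $L^1(X,\mm)$. For each $f_n$ I build an explicit Lipschitz approximation of $\chi_{E_{f_n}}$: choose $\psi_\epsilon:\R\to[0,1]$ that vanishes on $(-\infty,0]$, equals $1$ on $[\epsilon,\infty)$, and is $\epsilon^{-1}$-Lipschitz, and set $u_{n,\epsilon}(x,t):=\psi_\epsilon(f_n(x)-t)$. The locally Lipschitz function $g_n(x,t):=f_n(x)-t$ has product-slope bounded by $\sqrt{1+|\nabla f_n|^2(x)}$ (a Cauchy--Schwarz comparison of the horizontal and vertical contributions), so the chain rule for slopes gives $|\nabla u_{n,\epsilon}|(x,t)\leq \epsilon^{-1}\sqrt{1+|\nabla f_n|^2(x)}\,\chi_{\{0<f_n(x)-t<\epsilon\}}$ and $\int_{B\times\R}|\nabla u_{n,\epsilon}|\,d\tilde\mm\leq \int_B\sqrt{1+|\nabla f_n|^2}\,d\mm$. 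Since $u_{n,\epsilon}\to\chi_{E_{f_n}}$ in $L^1_{\rm loc}(\tilde\mm)$ as $\epsilon\downarrow 0$ and $\chi_{E_{f_n}}\to\chi_{E_f}$ in $L^1_{\rm loc}(\tilde\mm)$ as $n\to\infty$, a diagonal argument combined with lower semicontinuity of perimeter on open sets, $L^1$-continuity of $a\mapsto\sqrt{1+a^2}$, and outer regularity of the two Borel measures $B\mapsto P(E_f,B\times\R)$ and $B\mapsto\int_B\sqrt{1+|\nabla f|_{w,1}^2}\,d\mm$ on $X$ give (a).

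\textbf{Plan for (b), setup.} I may assume $P(E_f,B\times\R)<\infty$. Applying \eqref{optimal_sequence_BV} in $X\times\R$ to $\chi_{E_f}$ produces locally Lipschitz $F_n:X\times\R\to[0,1]$ (truncation does not harm) with $F_n\to \chi_{E_f}$ in $L^1_{\rm loc}(\tilde\mm)$ and $|\nabla F_n|\tilde\mm\weakto P(E_f,\cdot)$ as Radon measures. The space $(X\times\R,\tilde d,\tilde\mm)$ is doubling and satisfies $(1,2)$-Poincar\'e ($(1,1)$-PI on $X$ implies $(1,2)$-PI by Jensen, and both conditions tensorize with the Euclidean line), so Theorem~\ref{thm:PI_tensor} together with Theorem~\ref{thm:cheeger} yield, for $\tilde\mm$-a.e.\ $(x,t)$, the Pythagorean bound $|\nabla F_n|^2(x,t)\geq |\nabla F_n(\cdot,t)|^2(x)+|\partial_t F_n(x,t)|^2$.

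\textbf{Plan for (b), lower bound against test functions.} For continuous $\alpha,\beta:X\to[0,1]$ with $\alpha^2+\beta^2\leq 1$, the elementary inequality $\sqrt{a^2+b^2}\geq \alpha a+\beta b$ and Fubini give $\int_{B\times\R}|\nabla F_n|\,d\tilde\mm\geq \int_\R\int_B\alpha\,|\nabla F_n(\cdot,t)|\,d\mm\,dt+\int_B\beta\int_\R|\partial_t F_n|\,dt\,d\mm$. Along a subsequence, Fubini also yields $F_n(\cdot,t)\to \chi_{\{f>t\}}$ in $L^1(X,\mm)$ for $\Leb{1}$-a.e.\ $t$ and $F_n(x,\cdot)\to \chi_{(-\infty,f(x))}$ in $L^1(\R)$ for $\mm$-a.e.\ $x$. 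Passing to $\liminf$ via lower semicontinuity of $g\mapsto\int_B\alpha\,d|Dg|$ for continuous $\alpha\geq 0$, Fatou, the BV coarea formula $\int_\R|D\chi_{\{f>t\}}|\,dt=|Df|$, and lower semicontinuity of the one-dimensional total variation ($\liminf_n\int_\R|\partial_t F_n(x,\cdot)|\,dt\geq 1$ for $\mm$-a.e.\ $x$), I obtain, first on open $B$ and then on Borel $B$ by outer regularity, $P(E_f,B\times\R)\geq \int_B\alpha\,d|Df|+\int_B\beta\,d\mm=\int_B(\alpha u+\beta)\,d\mm+\int_B\alpha\,d|D^s f|$, with $u:=|\nabla f|_{*,1}$.

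\textbf{Main obstacle: optimizing over $(\alpha,\beta)$.} The hard step is to take the supremum over admissible $(\alpha,\beta)$ so as to produce $\int_B\sqrt{1+u^2}\,d\mm+|D^s f|(B)$: the $\mm$-part is optimized by $\alpha=u/\sqrt{1+u^2}$, $\beta=1/\sqrt{1+u^2}$, while the singular part demands $\alpha=1$, and these are incompatible. Exploit the mutual singularity of $|D^s f|$ and $\mm$: for $\delta>0$ pick a compact $K\subset B$ with $\mm(K)<\delta$ and $|D^s f|(B\setminus K)<\delta$; on $B\setminus K$, by Lusin there is a compact $L$ of large $\mm$-measure on which $u$ is continuous and bounded, and I extend $u|_L$ to some $\tilde u\in C_b(X)$. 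With a continuous cutoff $\eta:X\to[0,1]$ that is $1$ near $K$ and $0$ off a small neighbourhood of $K$, set $\alpha:=\eta+(1-\eta)\tilde u/\sqrt{1+\tilde u^2}$, $\beta:=(1-\eta)/\sqrt{1+\tilde u^2}$. A direct computation gives $\alpha^2+\beta^2=1-2\eta(1-\eta)\bigl(1-\tilde u/\sqrt{1+\tilde u^2}\bigr)\leq 1$, so the pair is admissible; on $L$ it realizes $\alpha u+\beta\to\sqrt{1+u^2}$ as the Lusin approximation sharpens, while on $K$ the value $\alpha=1$ extracts $|D^s f|(K)\geq |D^s f|(B)-\delta$. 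Letting $\delta\downarrow 0$ yields (b); this patching between the $\mm$- and $|D^s f|$-concentration sets, together with the Lusin-type replacement of the merely Borel function $u$ by a continuous one without sacrificing $L^1$-mass of $\sqrt{1+u^2}$, is the main technical obstacle.
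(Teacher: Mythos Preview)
Your argument is correct and follows essentially the same route as the paper's for both parts: smooth the subgraph via $\psi_\epsilon(f-t)$ for (a), and for (b) split the product slope of an approximating sequence into horizontal and vertical parts, test against a pair $(\alpha,\beta)$ with $\alpha^2+\beta^2\le 1$, pass to the limit using Fatou, coarea, and one-dimensional lower semicontinuity, and finally optimize over $(\alpha,\beta)$.

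Two remarks on the comparison. In (a) your direct Cauchy--Schwarz bound $|\nabla(f_n(x)-t)|\le\sqrt{1+|\nabla f_n|^2(x)}$ is a genuine simplification: the paper instead invokes Theorem~\ref{thm:PI_tensor} together with Theorem~\ref{thm:cheeger} to obtain the \emph{equality} $|\nabla g_\eps|(x,t)=\chi_\eps'(f(x)-t)\sqrt{1+|\nabla f|^2(x)}$, but only the upper bound is needed here. In (b), your detailed Lusin-and-cutoff construction for the supremum over $(\alpha,\beta)$ spells out a step the paper dispatches in one line (``since $|D^sf|\perp\mm$, taking the supremum\ldots'').

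There is, however, one point in (b) where your setup is less clean than the paper's. Having chosen the \emph{global} optimal sequence $F_n$ with $|\nabla F_n|\tilde\mm\weakto P(E_f,\cdot)$, Portmanteau on an open cylinder $B\times\R$ gives $\liminf_n\int_{B\times\R}|\nabla F_n|\,d\tilde\mm\ge P(E_f,B\times\R)$, which is the wrong direction: your chain yields a lower bound on $\liminf_n\int_{B\times\R}|\nabla F_n|$, not on $P(E_f,B\times\R)$. The argument works as written only for $B=X$, where total masses converge. The paper sidesteps this by taking, for each open $B$ and each $N$, an \emph{arbitrary} sequence $g_n\in\mathrm{Lip}_{\mathrm{loc}}(B\times(-N,N))$ from the very definition of $|D\chi_{E_f}|(B\times(-N,N))$, bounding $\liminf_n\int|\nabla g_n|$ from below, and then taking the infimum over such sequences; this localizes automatically. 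Replacing your optimal sequence by this device fixes the gap with no change to the rest of your argument.
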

\begin{proof} We first prove the inequality $\leq$ in \eqref{eq:goodper11}, for $f$ locally Lipschitz and $B\subset X$ open. 
Let $\chi_\eps\in C^\infty(\R)$
be monotonically convergent to $\chi_{(0,\infty)}$ with $0\leq\chi_\eps\leq 1$, $\chi_\eps'\geq 0$ and $\int\chi_\eps'\,dt\leq 1$. Then,
$$
g_\eps(x,t):=\chi_\eps(f(x)-t)
$$
provides a locally Lipschitz approximation of the characteristic function of $E_f$. Given our assumptions on $(X,d,\mm)$, Theorem~\ref{thm:cheeger} (applied in $(X\times\R,\tilde d,\tilde \mm)$ which is doubling and satisfies $(1,1)$-Poincar\'e for locally Lipschitz functions), Theorem~\ref{thm:PI_tensor} and the chain rule for weak gradients give
\begin{eqnarray*}
|\nabla g_\eps|(x,t)&=&|\nabla g_\eps|_{*}(x,t)=\sqrt{|\nabla g_\eps^x|^2_{*,\R}(t)+|\nabla g_\eps^t|^2_{*,X}(x)}\\&=&
\chi_\eps'(f(x)-t)\sqrt{1+|\nabla f|_*^2(x)}=\chi_\eps'(f(x)-t)\sqrt{1+|\nabla f|^2(x)}
\end{eqnarray*}
for $\tilde\mm$-a.e. $(x,t)\in X\times\R$.
If we integrate on $B\times\R$ and use Fubini's theorem, we get
$$
\int_{B\times\R}|\nabla g_\eps|\,d\tilde\mm\leq\int_B\sqrt{1+|\nabla f|^2}\,d\mm.
$$
By the definition of $P(E_f,B\times\R)$ we obtain the inequality 
\begin{equation}\label{eq:june18a}
P(E_f,B\times\R)\leq\int_B\sqrt{1+|\nabla f|^2}\,d\mm.
\end{equation}

For $f\in H^{1,1}(X,d,\mm)$ and $B$ open we use the Lipschitz approximation provided by Proposition~\ref{prop:strongapp} and the lower semicontinuity
of the perimeter in open sets to obtain \eqref{eq:goodper11}. Being an inequality between positive and finite Borel measures, 
it extends from open to Borel sets.

We now prove the inequality \eqref{eq:goodper12}. Let $g_n\in {\rm Lip}_{{\rm loc}}(X\times (-N,N))$ be convergent to $\chi_{E_f}$
in $L^1(X\times (-N,N),\tilde\mm)$. By Fubini's theorem, possibly refining the sequence, we can assume that
$g_n^t\to\chi_{E_f}^t=\chi_{\{f(x)>t\}}$ in $L^1(X,\mm)$ for $\Leb{1}$-a.e. $t\in (-N,N)$, and that
$g_n^x\to\chi_{E_f}^x=\chi_{\{t<f(x)\}}$ in $L^1(-N,N)$ for $\mm$-a.e. $x\in X$. 
Applying once more Theorem~\ref{thm:cheeger} in $X\times\R$ and Theorem~\ref{thm:Cquad} gives
$|\nabla g_n|^2(x,t)=|\nabla g_n^x|(t)^2+|\nabla g_n^t|^2(x)$ $\tilde\mm$-a.e. in $X\times (-N,N)$.
Fix now $a,\,b:X\to [0,1]$ continuous with $a^2+b^2\leq 1$ and notice that
\begin{eqnarray*}
&&\liminf_{n\to\infty}
\int_{X\times (-N,N)}\sqrt{|\nabla g_n^x|^2(t)+|\nabla g_n^t|^2(x)}\,d\tilde\mm(x,t)\\&\geq&
\liminf_{n\to\infty}
\int_{X\times (-N,N)}a(x)|\nabla g_n^x|(t)+b(x)|\nabla g_n^t|(x)\,d\tilde\mm(x,t)\\
&\geq&
\int_X\liminf_{n\to\infty}
\int_{(-N,N)}a(x)|\nabla g_n^x|(t)\,dt d\mm(x)+
\int_{(-N,N)}\liminf_{n\to\infty}
\int_Xb(x)|\nabla g_n^t|(x)\, d\mm(x)dt\\&\geq&
\int_X a\chi_{f\in (-N,N)}\,d\mm+\int_{(-N,N)}\int_Xb \,d|D\chi_{\{f>t\}}|\,dt.
\end{eqnarray*}
Now we use the fact that $(g_n)$ is arbitrary to get
$$
P(E_f,X\times (-N,N))\geq \int_X a\chi_{f\in (-N,N)}\,d\mm+\int_{(-N,N)}\int_Xb \,d|D\chi_{\{f>t\}}|\,dt.
$$
Letting $N\uparrow\infty$ and using the coarea formula 
$$
|Df|=\int_{-\infty}^\infty |D\chi_{\{f>t\}}|\, dt
$$
(see for instance \cite{Mi}) gives
$$
P(E_f,X\times\R)\geq \int_X a\,d\mm+\int b\,d|Df|=
\int_X (a+b|\nabla f|_{*,1})\,d\mm+\int_X b\,d|D^sf|.
$$
Since $|D^sf|\perp\mm$, taking the supremum among all admissible pairs $(a,b)$ we get
\eqref{eq:goodper12} with $B=X$.
Repeating the argument
with any open set $B\subset X$ the inequality is proved on all open sets and then on all Borel sets.
\end{proof}

In the proof of the statement made in the next remark, the following lemma will be useful.

\begin{lemma}\label{lem:strictconvexity}
Let $u_n\in L^1(X,\mm)$ be nonnegative functions satisfying
$$
\limsup_{n\to\infty}\int_X \sqrt{1+u_n^2}\,d\mm\leq\int_X\sqrt{1+u^2}\,d\mm
$$
for some nonegative $u\in L^1(X,\mm)$. If $u_n\mm$ weakly converge to a measure 
$\mu\geq u\mm$, then $\mu=u\mm$ and $u_n\to u$ in $L^1(X,\mm)$.
\end{lemma}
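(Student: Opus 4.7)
The plan is to prove the two conclusions in sequence, exploiting the strict convexity and strict monotonicity of $\phi(t):=\sqrt{1+t^2}$ on $[0,\infty)$.

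\textbf{Step 1 (identifying $\mu$).} I would write the Lebesgue decomposition $\mu=v\mm+\mu^s$; the hypothesis $\mu\geq u\mm$ forces $v\geq u$ $\mm$-a.e.\ Since $\phi$ is convex with recession $\phi^\infty(t)=t$ on $[0,\infty)$, the Reshetnyak/Goffman--Serrin theorem tells us that the functional $\nu\mapsto\int\phi(d\nu^a/d\mm)\,d\mm+\nu^s(X)$, defined on nonnegative Radon measures, is lower semicontinuous along weak convergence. Applied to $u_n\mm\weakto\mu$ together with the hypothesis this gives
\[
\int_X\sqrt{1+v^2}\,d\mm+\mu^s(X)\;\leq\;\liminf_n\int_X\sqrt{1+u_n^2}\,d\mm\;\leq\;\int_X\sqrt{1+u^2}\,d\mm,
\]
and the strict monotonicity of $\phi$ on $[0,\infty)$, together with $v\geq u$, forces every inequality to be an equality; hence $\mu^s=0$ and $v=u$ $\mm$-a.e., i.e.\ $\mu=u\mm$.

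\textbf{Step 2 (midpoint convexity).} Once $\mu=u\mm$, testing narrow convergence with $1\in C_b(X)$ gives $\int u_n\,d\mm\to\int u\,d\mm$, which together with the hypothesis upgrades the $\limsup$ to a limit, $\int\sqrt{1+u_n^2}\,d\mm\to\int\sqrt{1+u^2}\,d\mm$. I would then set $w_n:=(u_n+u)/2$, observe that $w_n\mm\weakto u\mm$ narrowly, and apply convexity:
\[
\int\sqrt{1+w_n^2}\,d\mm\;\leq\;\tfrac12\int\sqrt{1+u_n^2}\,d\mm+\tfrac12\int\sqrt{1+u^2}\,d\mm\;\longrightarrow\;\int\sqrt{1+u^2}\,d\mm,
\]
while the matching $\liminf$ inequality is the lower semicontinuity of Step~1 applied to $(w_n)$. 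Subtracting then produces the convexity-defect convergence
\[
\int_X\left[\frac{\sqrt{1+u_n^2}+\sqrt{1+u^2}}{2}-\sqrt{1+w_n^2}\right]d\mm\longrightarrow 0.
\]
Since $\phi''(t)=(1+t^2)^{-3/2}$, a Taylor expansion at the midpoint yields the quantitative strict-convexity bound, valid for all $a,b\geq 0$,
\[
\frac{\phi(a)+\phi(b)}{2}-\phi\bigl(\tfrac{a+b}{2}\bigr)\;\geq\;\frac{(a-b)^2}{8\bigl(1+(a\vee b)^2\bigr)^{3/2}},
\]
so the previous display forces $\int_X(u_n-u)^2\bigl(1+(u_n\vee u)^2\bigr)^{-3/2}\,d\mm\to 0$.

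\textbf{Step 3 (convergence in measure and Scheff\'e).} To upgrade this to convergence in $\mm$-measure I would truncate at level $M$: on $\{u_n\vee u\leq M\}$ the weight is bounded below by $(1+M^2)^{-3/2}$, so Step~2 controls $\mm(\{|u_n-u|>\eps\}\cap\{u_n\vee u\leq M\})$; the complementary set has $\mm$-measure $O(1/M)$ by Chebyshev and the uniform $L^1$-bound on $(u_n)$ coming from the boundedness of $\int\sqrt{1+u_n^2}\,d\mm$. Sending first $n\to\infty$ and then $M\to\infty$ delivers $u_n\to u$ in $\mm$-measure, and Scheff\'e's lemma, combined with $\int u_n\,d\mm\to\int u\,d\mm$, finally upgrades this to $u_n\to u$ in $L^1(X,\mm)$.

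The delicate point is that $\phi$ grows only linearly at infinity, so de la Vall\'ee--Poussin type arguments for equi-integrability are unavailable from the area bound; the proof therefore cannot route through weak-$L^1$ compactness. The mechanism is instead the interplay between quantitative strict convexity (whose constant degenerates as $(1+t^2)^{-3/2}$ and must be compensated by the Chebyshev tail estimate) and Scheff\'e's lemma, which is exactly what turns the vanishing of the midpoint defect into genuine $L^1$ convergence.
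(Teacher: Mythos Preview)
Your proof is correct and follows the same strategy as the paper's: Step~1 is essentially identical (Radon--Nikodym decomposition plus Reshetnyak-type lower semicontinuity to force $\mu=u\mm$ and convergence of the area integrals), and your Steps~2--3 supply a self-contained midpoint-defect/Scheff\'e argument for the $L^1$ convergence that the paper simply delegates to the strict-convexity result \cite[Exercise~1.20]{AFP}. One minor expository point: the upgrade of the $\limsup$ to a genuine limit is already contained in the chain of inequalities of Step~1, not in testing against $1$---though you do need $\int u_n\,d\mm\to\int u\,d\mm$ later for Scheff\'e, so the observation is not wasted.
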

\begin{proof} Let $\nu=\frac{\nu}{\mm}\mm+\mu^s$ be the Radon-Nikodym decomposition of 
$\nu$ with respect $\mm$. Since the functional 
$$
\nu\mapsto \int_X\sqrt{1+\bigl|\frac{\nu}{\mm}\bigr|^2}\,d\mm+\nu^s(X)
$$
is sequentially weakly lower semicontinuous and the density of $\mu$ is, by assumption, larger than $u$ $\mm$-a.e.,
we obtain
$$
\liminf_{n\to\infty}\int_X \sqrt{1+u_n^2}\,d\mm\geq
\int_X \sqrt{1+\bigl|\frac{\mu}{\mm}\bigr|^2}\,d\mm+\mu^s(X)\geq
\int_X\sqrt{1+u^2}\,d\mm+\mu^s(X),
$$
hence $\mu^s(X)=0$, $\mu=u\mm$ and $\int\sqrt{1+u_n^2}\,d\mm$ converge to $\int\sqrt{1+u^2}\,d\mm$. We can now use
the strict convexity of $z\mapsto\sqrt{1+z^2}$ (see for instance Exercise~1.20 of \cite{AFP}) to conclude.
\end{proof} 

By the lower semicontinuous nature of both functionals, we believe that equality always holds in
\eqref{eq:goodper12} at least on $f\in H^{1,1}(X,d,\mm)$, i.e.
$$
P(E_f,B\times\R)=\int_B\sqrt{1+|\nabla f|_{*,1}^2}\,d\mm\qquad\text{for all $B\subset X$ Borel.}
$$
In the next remark, instead, we compare with the relaxation point of view of \cite{HKLL14}.

\begin{remark} {\rm Le us discuss the question of the validity of the equality
\begin{equation}\label{eq:june18b}
\inf\biggl\{\liminf_{n\to\infty}\int_X\sqrt{1+|\nabla f_n|^2}\,d\mm:\ f_n\in{\rm Lip}_{\rm loc}(X),\,\,
f_n\to f\,\,\text{in $L^1(X,\mm)$}\biggr\}=\int_X\sqrt{1+|\nabla f|_{*,1}^2}\,d\mm
\end{equation}
for $f\in H^{1,1}(X,d,\mm)$.
Notice that the inequality $\geq$ follows at once from the lower semicontinuity of the perimeter and 
\eqref{eq:june18a}, \eqref{eq:goodper12} with $B=X$. We can prove that equality holds if and only
if $|\nabla f|_{*,1}=|\nabla f|_{w,1}$ $\mm$-a.e. in $X$. Indeed, if equality holds, a diagonal argument provides
$f_n\in {\rm Lip}_{\rm loc}(X)$ such that $f_n\to f$ in $L^1(X,\mm)$ and
$$
\limsup_{n\to\infty}\int_X\sqrt{1+|\nabla f_n|^2}\,d\mm\leq\int_X\sqrt{1+|\nabla f|_{*,1}^2}\,d\mm.
$$
If we denote by $\mu$ a weak limit of $|\nabla f_n|$, we obviously have $\mu\geq |Df|$ on
open sets $A$, hence $\mu\geq |\nabla f|_{*,1}\mm$. From Lemma~\ref{lem:strictconvexity} we obtain
that $\mu\ll\mm$ and that $|\nabla f_n|\to |\nabla f|_{*,1}$ in $L^1(X,\mm)$. As in \cite{Sh}, this implies
the existence of representatives $\tilde f$ of $f$ and $g$ of $|\nabla f|_{*,1}$ such that
$$
|\tilde f(\gamma(1))-\tilde f(\gamma(0))|\leq\int_\gamma g
$$
for ${\rm Mod}_1$-a.e. curve $\gamma$, hence $|\nabla f|_{*,1}\geq |\nabla f|_{w,1}$ $\mm$-a.e. in $X$.
}
\end{remark}

\section{Appendix: proof of the inequality $|\nabla f|_*\geq |\nabla f|_{*,c}$}\label{sec:appendix}

In \cite{AGS11a}, a very detailed
analysis of the fine properties of the Hopf-Lax semigroup
\begin{equation}\label{eq:Prato6}
Q_tg(w):=\inf_{w'\in W} g(w')+\frac{1}{2t}d_W^2(w',w)
\end{equation}
in a metric space $(W,d_W)$ has been made. The analysis is based
on the quantities
$$
D^+_g(w,t):=\sup\limsup_{n\to\infty} d_W(w,w_n'),\qquad
D^-_g(w,t):=\inf\liminf_{n\to\infty} d_W(w,w_n),
$$
where the supremum and the infimum run among all minimizing
sequences $(w_n)$ in \eqref{eq:Prato6}. These quantities reduce
respectively to the maximum and minimum distance from $w$ of
minimizers in the locally compact case. Confining for simplicity our
discussion to the case of bounded functions $g:W\to\R$, which suffices for our
purposes, for all $t>0$ it has been shown that (see Propositions~3.2, 3.3 and 3.4 of \cite{AGS11a}):
\begin{itemize}
\item[(a)] $D^+_g$ and $D^-_g$ are
respectively upper and lower semicontinuous in $W\times (0,\infty)$ and,
given $w$, $D^+_g(w,t)=D^-_g(w,t)$ with at most countably many exceptions;
\item[(b)] $Q_t g$ is Lipschitz in $X$, with ${\rm Lip}(Q_t g)\leq 2\sqrt{{\rm osc}(g)/t}$, where ${\rm osc}(f)=\sup f-\inf f$;
\item[(c)] $D^-_g(\cdot,t)/t$ is an upper gradient of $Q_tg$;
\item[(d)] the following pointwise equality holds:
\begin{equation}\label{eq:Prato4}
\frac{d^+}{d t}Q_tg(w)+\frac{(D^+_g(w,t))^2}{2t^2}=0,
\end{equation}
where $d^+/d t$ stands for right derivative (part
of the statement is its existence at every point).
\end{itemize}
Notice that, since $D^+_g(\cdot,t)/t\geq D^-_g(\cdot,t)/t$ is an upper
semicontinuous upper gradient of $Q_t g$, it bounds the slope of $Q_t g$ from above.
Therefore \eqref{eq:Prato4} implies the Hamilton-Jacobi subsolution property $\tfrac{d^+}{d
t}Q_tg+|\nabla Q_tg|^2/2\leq 0$, but the
sharper form \eqref{eq:Prato4} is often essential for the proofs.

More precisely, our proof of the inequality $|\nabla f|_*\geq |\nabla f|_{*,c}$ is based on a refinement of Lemma~6.16, Lemma~6.17 and 
Proposition~6.18 of \cite{AGS11b} and it is split in these steps:

{\bf Step 1.} {\it Let $g:Z\to\R$ be a bounded function and set
\begin{equation}\label{eq:treponzio}
Q_t g(x,y):=\inf\left\{g(x',y')+\frac 1{2t}\bigl(d^2_X(x,x')+d^2_Y(y,y')\bigr)\right\}.
\end{equation}
In this step we show that for $\Leb{1}$-a.e. $t>0$ one has
\begin{equation}\label{eq:prato8}
\frac{d^+}{dt}Q_t g+\frac 12 |\nabla Q_t g|_c^2\leq 0\qquad\text{$\mm$-a.e. in $Z$.}
\end{equation}}
In (6.40) of \cite{AGS11b}, starting from \eqref{eq:Prato4}, an inequality similar to \eqref{eq:prato8} for
\emph{all $t>0$} has been proved, where instead of $|\nabla Q_t g|_c^2$ the sum $|(Q_t g)^x|_{*,Y}^2+|(Q_t g)^y|_{*,X}^2$
was used. Our refinement \eqref{eq:prato8} still relies on \eqref{eq:Prato4} and the pointwise inequality (see (6.43) of \cite{AGS11b})
\begin{equation}\label{eq:june1}
[D^+_g((x,y),t)]^2\geq
[D^-_{L_{t,y}}(x,t)]^2+[D^-_{R_{t,x}}(y,t)]^2
\end{equation}
where $L_{t,y}(x'):=Q_t^Yg(x',\cdot)(y)$ and
$R_{t,x}(y'):=Q_t^Xg(\cdot,y')(x)$.  By minimizing first w.r.t. to one variable and then with respect to the other variable
in \eqref{eq:treponzio} we have the
easy identities
\begin{equation}\label{eq:identities}
(Q_tg)^y(x)=Q_t^X(L_{t,y})(x),\qquad (Q_tg)^x(y)=Q_t^Y(R_{t,x})(y).
\end{equation}

By (a) we can apply Fubini's theorem to convert $D^-$ into $D^+$ in \eqref{eq:june1}, thus obtaining 
$$
[D^+_g((x,y),t)]^2\geq
[D^+_{L_{t,y}}(x,t)]^2+[D^+_{R_{t,x}}(y,t)]^2\qquad\text{for $\mm$-a.e. $(x,y)\in Z$}
$$
for $\Leb{1}$-a.e. $t>0$. Since $D^+$ is an upper semicontinuous upper gradient and upper semicontinuous
upper gradients bound, as we already said, the slope from above, we can use \eqref{eq:identities} to get eventually \eqref{eq:prato8}.

{\bf Step 2.} (the so-called Kuwada lemma). {\it In this step we show that if $h\in L^\infty(Z,\mm)$ is a probability density, and $h_t$ is the solution to the gradient
flow of $\C_c$ starting from $h$, then $\mu_t=h_t\mm\in\Probabilities{Z}$, $t\mapsto\mu_t$ is locally absolutely continuous from $[0,\infty)$ to
$\Probabilities{Z}$ endowed with the quadratic Wasserstein distance $W_2$. In addition, its metric speed satisfies
\begin{equation}\label{eq:Levico11}
|\dot\mu_t|^2\leq\int_{\{h_t>0\}}\frac{|\nabla h_t|_{*,c}^2}{h_t}\,d\mm\qquad\text{for $\Leb{1}$-a.e. $t>0$}.
\end{equation}}
The proof of this fact uses the Hamilton-Jacobi subsolution property \eqref{eq:prato8} of Step 1 (used in integral form, so its
validity for $\Leb{1}$-a.e. $t>0$ is sufficient), as well as the integration by parts formula
\eqref{eq:bypartsG} with the Laplacian $\Delta_G$ corresponding to the energy $\C_G$ induced by the
pseudo gradient $G(f)=|\nabla f|_c$.

{\bf Step 3.} {\it In this step we conclude the proof of the inequality $|\nabla f|_*\geq |\nabla f|_{*,c}$}. 
Now for any bounded nonnegative $f\in D(\C)$ with $\int f^2\,d\mm=1$ we can find, arguing exactly as in Proposition~6.18 of
\cite{AGS11b}, a sequence of locally Lipschitz
functions $f_n$ convergent to $f$ in $L^2(X,\mm)$ satisfying
\begin{equation}\label{eq:easter}
\limsup_{n\to\infty}\int_Z |\nabla f_n|_{*,c}^2\,d\mm\leq \C(f)
\end{equation}
(the only difference is that the gradient flow of $\C_c$ has to be used, as in the previous steps). Therefore $f\in D(\C_c)$
and $\C_c(f)\leq\C(f)$ which, in combination with \eqref{eq:cdc1}, gives the result. 
Using invariance under addition by constants and homogeneity, we extend the result to all bounded functions $f$.
Eventually a truncation argument and the locality of weak gradients provide the result for general $f$. The preliminary
reduction to nonnegative and normalized $f$'s is necessary in view of Step 2, because we use in the construction of
$f_n$ the estimate \eqref{eq:Levico11} on metric derivative for a $\Probabilities{Z}$-valued map. 

We briefly sketch, for the reader's convenience the argument leading to \eqref{eq:easter}, referring to \cite[Theorem~6.2]{AGS11a} and
\cite[Proposition~6.18]{AGS11b} for more details. By homogeneity and invariance under addition of constants we can assume, besides
$\int f^2\,d\mm=1$, that
$c^{-1}\ge f\ge c>0$ $\mm$-almost everywhere in $Z$. We consider the gradient flow $(h_t)$ of $\C_c$ with
initial datum $h:=f^2$, setting $\mu_t=h_t\mm$. The maximum principle yields $c^{-2}\geq
h_t\geq c^2$ and a standard argument based on chain rule and integration by parts \eqref{eq:bypartsG}
yields the energy dissipation identity
\begin{equation}\label{eq:Levico12}
\frac{d}{d t}\int h_t\log h_t\,d\mm=-\int_{\{h_t>0\}}\frac{|\nabla h_t|_{*,c}^2}{h_t}\,d\mm
\qquad\text{for $\Leb{1}$-a.e. $t>0$.}
\end{equation}

Let $g=h^{-1}|\nabla h|_*$;  by the chain rule we know
that $\log h$ is Sobolev along almost every curve and we can use the same
argument of \cite[Theorem~6.2]{AGS11a} to get
$$
\int\big(h\log h-h_t \log h_t\big)\,d\mm\leq\int \log
h(h-h_t)\,d\mm\leq \Big(\int_0^t\int g^2 h_s\,d\mm\,d s\Big)^{1/2}\Big(
\int_0^t|\dot \mu_s|^2\,d s\Big)^{1/2}.
$$
Now, inequality \eqref{eq:Levico11} gives
\begin{eqnarray*}
  \int\big(h\log h-h_t \log h_t\big)\,d\mm&\le&
  \frac 12 \int_0^t\int  g^2 h_s\,d\mm\,d s+
  \frac 12 \int_0^t |\dot\mu_s|^2\,d s
  \\&\le&
  \frac 12 \int_0^t\int  g^2 h_s\,d\mm\,d s+
  \frac 12 \int_0^t \int_{\{h_s>0\}}\frac{|\nabla h_s|_{*,c}^2}{h_s}\,d\mm\,d s.
\end{eqnarray*}
Recalling the entropy dissipation formula \eqref{eq:Levico12} we
obtain
\[
  \int_0^t \int_{\{h_s>0\}}\frac {|\nabla h_s|_{*,c}^2}{h_s}\,d\mm\,d s\le\int_0^t\int g^2h_s\,d\mm\,d s.
\]
Now, the chain rule and the identity $g=2f^{-1}|\nabla f|_*$ give
$\int_0^t\C_c(\sqrt{h_s})\,d s\leq\int_0^t\int|\nabla f|_*^2f^{-2}
h_s\,d\mm\,d s$, so that dividing by $t$ and passing to the limit
as $t\downarrow0$ we get 
$$
\limsup_{t\downarrow 0}\frac 1 t
\int_0^t\C_c(\sqrt{h_s})\,d s\leq\int_X|\nabla f|_*^2\,d\mm.
$$
Therefore there exists $s_i\downarrow 0$ with
$\limsup_i\C_c(\sqrt{h_{s_i}})\leq\C(f)$.
Since $\sqrt{h_{s_i}}$ are
equibounded and converge strongly to $f$ in $L^2(Z,\mm)$ as
$s\downarrow0$, a diagonal argument provides \eqref{eq:easter}.

\end{document}